\documentclass[a4paper,11pt]{article}

\usepackage{authblk}
\usepackage{a4wide}
\usepackage[utf8]{inputenc}
\usepackage{amsmath}
\usepackage{amsfonts}
\usepackage{amssymb}
\usepackage{mathrsfs}
\usepackage{amsthm}
\usepackage{color}
\usepackage{stmaryrd}
\usepackage[hidelinks]{hyperref}

\usepackage{latexsym,enumerate}
\usepackage{xcolor}
\usepackage{graphicx}
\usepackage{epstopdf}

\usepackage[T1]{fontenc}
\usepackage[utf8]{inputenc}
\usepackage[english]{babel}

\newcommand{\VV}[2]{\left(\!\begin{array}{c}#1\\#2\end{array}\!\right) }
\newcommand{\dx}[0]{\,\mathrm{d}}

\newcommand{\fbsde}[0]{}
\newcommand{\esssup}[0]{\mathrm{ess}\,\mathrm{sup}}
\definecolor{grey2}{gray}{0.72}
\definecolor{grey1}{gray}{0.87}
\newcommand{\mybox}[1]{\textrm{\colorbox{grey1}{$#1$}}}
\newcommand{\Mybox}[1]{\textrm{\colorbox{grey2}{$#1$}}}
\newcommand{\myqed}[0]{\checkmark}





\def \1{\mathbf{1}}

\newcommand{\be}{\begin{eqnarray*}}
\newcommand{\ee}{\end{eqnarray*}}
\newcommand{\ben}{\begin{eqnarray}}
\newcommand{\een}{\end{eqnarray}}
\newcommand{\bi}{\begin{itemize}}
\newcommand{\ei}{\end{itemize}}

\definecolor{color2}{gray}{0.7}

\newtheorem{thm}{Theorem}[section]

\newtheorem{lemma}[thm]{Lemma}

\theoremstyle{definition}

\newtheorem{defi}[thm]{Definition}

\newtheorem{remark}[thm]{Remark}

\date{April 12, 2017}
\title{Utility maximization via decoupling fields}
\author[1]{Alexander Fromm\thanks{alexander.fromm@uni-jena.de}}
\author[2]{Peter Imkeller\thanks{imkeller@math.hu-berlin.de}}
\affil[1]{\small Institute for Mathematics, University of Jena, Ernst-Abbe-Platz 2, 07743 Jena, Germany}
\affil[2]{\small Institute for Mathematics, Humboldt University of Berlin, Unter den Linden 6, D-10099 Berlin}

\begin{document}

\maketitle

\begin{abstract}
We consider the utility maximization problem for a general class of utility functions defined on the real line. We rely on existing results which reduce the problem to a coupled forward-backward stochastic differential equation (FBSDE) and concentrate on showing existence and uniqueness of solution processes to this FBSDE. We use the method of decoupling fields for strongly coupled, multi-dimensional and possibly non-Lipschitz systems as the central technique in conducting the proofs.
\end{abstract}

\vspace{0.5cm}
\noindent \textbf{2010 Mathematics Subject Classification.} 93E20, 49J55, 60H30, 60H99.

\smallskip
\noindent \textbf{Keywords.} Optimal stochastic control, forward backward stochastic differential equation, decoupling field.

\section*{Introduction}

The central problem to which we apply techniques of forward-backward stochastic differential equations (FBSDE) in this paper originally comes from \emph{securitization}, i.e. insuring market exogenous risk by investing on a capital market. Typically, a small agent whose preferences are described by a utility function $U$ wants to insure
a random liability $H$ arising from his usual business. He, therefore, has two sources of income: his random liability, and
the wealth obtained from trading on the capital market up to a terminal
time with appropriate investment strategies. The \textit{stochastic control problem}
he faces results in the maximization of his terminal utility obtained from
both sources of income with respect to all admissible strategies. More formally, given his initial wealth $x>0$, he wants to attain
the value
\begin{equation}
\label{eq:introopti}
V(0,x):=\sup_{\pi \in \mathcal{A}} \mathbb{E}[U(X_T^\pi+H)],
\end{equation}
where $\mathcal{A}$ is the set of {\sl admissible trading
strategies}, $T < \infty$ the trading horizon, $X_T^{\pi}$ the agent's terminal
wealth related to an investment strategy $\pi \in \mathcal{A}$.
This wealth is obtained from investing in a financial market composed of a zero interest rate bond, and $d\geq 1$ stocks given by
$$ d{S}_t^i:={S}_t^i dW^i_t + {S}_t^i \theta_t^i dt, \quad i \in \{1,\ldots,d\}, $$
where $W$ is a standard Brownian motion on $\mathbb{R}^d$, and $\theta$ the process describing market risk. As in \cite{HIRZ13}, trading underlies a linear  constraint: assume $d_1+d_2=d$ and that the agent can only invest in the assets ${S}^1,\ldots,{S}^{d_1}$.
Other relevant problems in this context are the characterization of optimal strategies and the \textit{value
function} $V$ which for $0\le t\le T$ is given by
$$ V(t,x):=\sup_{\pi \in \mathcal{A}} \mathbb{E}[U(X_{t,T}^\pi+H)\vert \mathcal{F}_t].$$
Here $X_{t,T}$ is the wealth obtained in the investment period $[t,T]$, and
$(\mathcal{F}_t)_{t\in [0,T]}$ describes the evolution of information.
\\[5pt]
The most common technique employed to obtain optimal strategies
$\pi^\ast$ is related to \textit{convex analysis and duality} (see Bismut \cite{Bismut}, Pliska \cite{Pliska1986}, Karatzas and co-workers (Karatzas et al.
\cite{Karatzas}, \cite{KaratzasXu}, \cite{CvitanicKaratzas}),
Kramkov
and Schachermayer \cite{KramkovSchachermayer}).
A direct stochastic approach to characterize optimal trading
strategies in the case of non-linear, even non-convex trading constraints is provided by an interpretation of the martingale
optimality principle by (forward) backward stochastic
differential equations (FBSDE) (see El Karoui et al. \cite{ElKarouiRouge}, Sekine
\cite{Sekine}, and Hu et al. \cite{Hu2005}). If
the utility function is exponential, or power or logarithmic (and $H = 0$), it has been shown by Hu et al. \cite{Hu2005} that
the control problem \eqref{eq:introopti} can essentially be reduced to
solving a BSDE of the form
\begin{equation}
\label{eq:introBSDE}
Y_t =H-\int_t^T Z_s dW_s-\int_t^T f(s,Z_s) ds, \quad t\in [0,T],
\end{equation}
where the {\sl driver} $f(t,z)$ is of quadratic
growth in the $z$-variable. \\
While for these classical utility functions forward and backward components of the investment dynamics decouple, in \cite{HIRZ13} the problem \eqref{eq:introopti} has been tackled for a larger class of utility
functions, and shown to lead to a
fully-coupled system of FBSDE, again typically with a driver of quadratic growth in the control variable.
The derivation of this system starts with
a verification type observation. Given an optimal strategy $\pi^\ast$ of the
(forward) portfolio process $X^{\pi^\ast}$, to realize martingale
optimality one postulates that $U'(X^{\pi^\ast}+Y)$ is a martingale, where
$(Y,Z)$ is the associated backward process. As a consequence, $(Y,Z)$ is
given by a certainty equivalent type expression for marginal utility $Y =
(U')^{-1}(\mathbb{E}(U'(X_T^{\pi^\ast}+H)|\mathcal{F}_\cdot) -
X^{\pi^\ast}.$ This allows to compute the driver of the
BSDE related to $(Y,Z)$. It is given in terms of the derivatives of $U$,
involves the optimal forward process $X^{\pi^\ast},$ and provides the
backward part of the FBSDE system. In a second step, one considers possible
solution triples $(X,Y,Z)$ of the FBSDE system obtained in the first step,
not assuming that $X$ corresponds to an optimal portfolio process. One then
uses the variational maximum principle in order to verify that under mild
conditions on $U$ the triple $(X,Y,Z)$ solves the original optimization
problem. This in particular means that $X$ coincides with an optimal
forward portfolio process $X^{\pi^\ast}.$ In summary, under mild regularity
conditions, solutions $(X,Y,Z)$ of the FBSDE system provide
solutions of the original securitization problem.
If $\theta$ is the price of risk process associated to the price dynamics of the market, and $\pi_1$ denotes the projection on the first $d_1$ coordinates in $\mathbb{R}^d$, $\pi_2$ the one on the remaining $d_2$ coordinates, $\pi^\ast$ is given by
\begin{equation}\label{heyne-fbsde:opt:strat}
\pi^\ast=-\pi_1(\theta)\frac{U'(X+Y)}{U''(X+Y)}-\pi_1(Z),
\end{equation}
and the FBSDE by
\begin{multline}\label{utility-FBSDE}
X_t =x-\int_0^t\left(\pi_1(\theta_s)\frac{U'(X_s+Y_s)}{U''(X_s+Y_s)}+\pi_1(Z_s)\right)^\top \dx W_s-\\
-\int_0^t\left(\pi_1(\theta_s)\frac{U'(X_s+Y_s)}{U''(X_s+Y_s)}+\pi_1(Z_s)\right)^\top\pi_1(\theta_s) \dx s, \\
\shoveleft{Y_t =H-\int_t^T Z^\top_s \dx W_s-\int_t^T\left[-\frac{1}{2}|\pi_1(\theta_s)|^2\frac{U^{(3)}(X_s+Y_s)|U'(X_s+Y_s)|^2}{(U''(X_s+Y_s))^3}+\right. }\\
+\left. |\pi_1(\theta_s)|^2\frac{U'(X_s+Y_s)}{U''(X_s+Y_s)}+Z_s\cdot\pi_1(\theta_s)-\frac{1}{2} |\pi_2(Z_s)|^2\frac{U^{(3)}}{U''}(X_s+Y_s)\right]\dx s.
\end{multline}
This of course only translates the original utility optimization problem into another problem the solvability of which is far from obvious and remains largely unanswered in \cite{HIRZ13}.

In this paper we use the technique of decoupling fields to show in a reasonable framework that solution triples of systems as the one above exist and are unique. Let us point out that in this paper we successfully study the real line case, i.e. the class of problems for which the utility function $U$ has $\mathbb{R}$ as its domain, while the equally important half-line case, i.e. the situation where $[0,\infty)$ is the domain, leads to a different FBSDE and is therefore subject to future work.

To sketch the tool of decoupling fields we apply to treat the above forward-backward system, consider a general FBSDE of the form
$$ X_t=x+\int_{0}^t\mu(s,X_s,Y_s,Z_s)\dx s+\int_{0}^t\sigma(s,X_s,Y_s,Z_s)\dx W_s, $$
$$ Y_t=\xi(X_T)-\int_{t}^{T}f(s,X_s,Y_s,Z_s)\dx s-\int_{t}^{T}Z_s\dx W_s.$$
$X$ and $Y$ may be multidimensional, and the particular nature of the underlying problem is encoded in the parameter functions $\mu,\sigma,f$ which may be random, but at least progressively measurable. The terminal condition $\xi$, besides the terminal value of the forward process $X$, may have a further dependence on randomness, and is required to be measurable w.r.t. $\mathcal{F}_T$.
The system is called decoupled if either $\mu,\sigma$ do not depend on $Y,Z$, or if $\xi$,$f$ do not depend on $X$. In these two cases the problem can be treated by solving one of the equations first, and then simply substituting the solution processes obtained into the other one.

Coupled FBSDE have been extensively studied, but
essentially for Lipschitz coefficients (see \cite{mayong}, see also \cite{Delarue}). The so called \emph{Four Step Scheme} (see \cite{mayong}) is based on reducing the problem to a quasi-linear parabolic PDE. This works for parameter functions which are deterministic and sufficiently smooth.
The \emph{Method of Continuation} (e.g. \cite{hupeng}) is purely stochastic, but relies on monotonicity assumptions for the parameter functions that are hard to verify. A more general technique is developed by Zhang et al. with the concept of \emph{Decoupling Fields} in \cite{Ma2011}. The \emph{Contraction Method} proposed by Antonelli \cite{Antonelli1993} (see also \cite{PT}) is extended to construct solutions on large intervals by patching together solutions defined on small intervals. \cite{Ma2011} concentrates on one-dimensional problems, while in Chapter 2 of \cite{Fromm2015} a theory in a multi-dimensional setting has been developed which provides existence as well as uniqueness and regularity of solutions on maximal intervals characterized by properties of the decoupling field. Global Lipschitz continuity is required only in the non-Markovian case, while in the more special Markovian case a form of local Lipschitz continuity suffices.

In this work we apply the general results obtained in Chapter 2 of \cite{Fromm2015} to the particular setting of \eqref{utility-FBSDE} in order to obtain existence and uniqueness of solutions. This requires some conditions concerning the structure of the utility functions considered, expressed by boundedness conditions for quotients of its derivatives up to order 3 together with the condition $\left(\ln(-U'')\right)''\leq 0$ which is motivated in Remark \ref{remarkU}.
The extension of existence and regularity results for decoupling fields to the situation of FBSDE generators with quadratic growth also leads us to consider a (Markovian) scenario in which the market price of risk process together with the terminal condition $\xi$ may depend on randomness, but only through the values of an external, possibly high-dimensional, diffusion. In this scenario, the case in which the driver is only locally Lipschitz is reduced to the Lipschitz case by obtaining effective bounds on the control process through its description by the decoupling field and its derivatives.

The paper is organized as follows. In Section \ref{prelim} we introduce and discuss the assumptions we make on the parameters $U$, $\theta$ and $H$ of the initial problem for the corresponding FBSDE to be solvable using the method of decoupling fields. In the following Section \ref{decf} we briefly sum up the method for a class of problems called MLLC, standing for \emph{Modified Local Lipschitz Conditions}. In the last section we show existence and uniqueness of the FBSDE \eqref{utility-FBSDE} by first showing a more general result of Theorem \ref{sc3} and then applying it to the more specific structure of \eqref{utility-FBSDE} in Theorem \ref{finalresult}.

\section{Preliminaries}\label{prelim}

We require the utility function $U:\mathbb{R}\rightarrow\mathbb{R}$ to satisfy the following condition:

\begin{description}
\item{(C1)} $U$ has the form
$$U(x)=-\int_x^\infty\int_y^\infty \exp(-\kappa(z))\dx z\dx y$$
with some $\kappa:\mathbb{R}\rightarrow\mathbb{R}$ satisfying:
\begin{itemize}
\item $\kappa$ is twice differentiable,
\item $0<\inf_{x\in\mathbb{R}}\kappa'(x)\leq \sup_{x\in\mathbb{R}}\kappa'(x)<\infty$ and
\item $0\leq\inf_{x\in\mathbb{R}}\kappa''(x)\leq \sup_{x\in\mathbb{R}}\kappa''(x)<\infty$.
\end{itemize}
\end{description}

\begin{remark}\label{remarkU}
In the terminology of exponential utility functions, which are a special case of utilities $U$ considered above, $\kappa'(x)$ can be interpreted as the "local risk aversion" which unlike in the case of exponential utility is allowed to change with $x$. We essentially require that $\kappa'$ is strictly positive, which is self evident, but also, that it is increasing. The latter means, that if our position $x$ is small, we will trade aggressively with low risk aversion, because we have nothing to lose, but if our position is large, for instance due to good profits in the past, we will prefer more conservative trading strategies to lock in the gains.

The fact that we require $U''$ to have the structure $-\exp(-\kappa(x))$ with some sufficiently smooth function $\kappa$ is not really restrictive due to the fact that it already follows from assuming that $U$ is strictly concave (it already has to be concave in order to be a utility function) and sufficiently smooth. Neither do we consider differentiability and boundedness assumptions related to $\kappa$ structurally restrictive. The only ''hard'' restrictions are $\kappa'\geq\varepsilon>0$ and $\kappa''\geq 0$ both of which make sense in general as we have motivated above.
\end{remark}

Let us prove some properties of utility functions $U$ with the above structure:

\begin{lemma}\label{Uproperties}
Assume that $U:\mathbb{R}\rightarrow\mathbb{R}$ satisfies condition (C1). Then the following holds:
\begin{itemize}
\item $U''<0$ everywhere,
\item $(\ln(-U''))''$ is non-positive and bounded,
\item $\frac{U'}{U''}$, $\frac{U''}{U'}$ and $\frac{U^{(3)}}{U''}$ are bounded.
\end{itemize}
\end{lemma}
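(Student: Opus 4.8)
The plan is to express every derivative of $U$ explicitly through $\kappa$ and then read off all three assertions. First I would note that $U$ is well defined: since $\kappa'\geq\eps:=\inf_{x}\kappa'(x)>0$ we have $\kappa(z)\geq\kappa(0)+\eps z$ for $z\geq 0$, so the inner integral $\int_y^\infty\exp(-\kappa(z))\dx z$ converges and decays exponentially in $y$, whence the outer integral converges too. Differentiating the defining double integral then gives, in turn,
\[
U'(x)=\int_x^\infty\exp(-\kappa(z))\dx z>0,\qquad U''(x)=-\exp(-\kappa(x))<0,\qquad U^{(3)}(x)=\kappa'(x)\exp(-\kappa(x)),
\]
and the strict negativity of $U''$ is already the first bullet.

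For the second bullet, $\ln(-U''(x))=-\kappa(x)$, hence $(\ln(-U''))''=-\kappa''$; by (C1) we have $0\leq\kappa''\leq\sup_{x}\kappa''(x)<\infty$, so $(\ln(-U''))''$ takes values in $[-\sup_{x}\kappa''(x),0]$, i.e. it is non-positive and bounded. For the third bullet, $U^{(3)}/U''=-\kappa'$ is bounded directly by (C1). Since the ratios $U'/U''$ and $U''/U'$ are reciprocals of each other, it suffices to bound $-U''(x)/U'(x)=\exp(-\kappa(x))/I(x)$, with $I(x):=\int_x^\infty\exp(-\kappa(z))\dx z$, away from both $0$ and $\infty$.

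The one estimate that is not completely immediate — and the only place where the two-sided bound on $\kappa'$ is genuinely used — is controlling $I(x)$ against $\exp(-\kappa(x))$. With $\eps:=\inf_{x}\kappa'(x)>0$ and $K:=\sup_{x}\kappa'(x)<\infty$, the mean value theorem gives $\kappa(x)+\eps(z-x)\leq\kappa(z)\leq\kappa(x)+K(z-x)$ for all $z\geq x$; integrating the corresponding bounds on $\exp(-\kappa(z))$ over $[x,\infty)$ yields
\[
\frac{\exp(-\kappa(x))}{K}\leq I(x)\leq\frac{\exp(-\kappa(x))}{\eps}.
\]
Hence $\eps\leq -U''(x)/U'(x)\leq K$ for every $x$, so $U''/U'$ is bounded by $K$ and, staying bounded away from zero by $\eps$, its reciprocal $U'/U''$ is bounded by $1/\eps$. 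This settles all three bullets. I do not expect a serious obstacle: the argument is a short computation, the only mildly delicate point being the elementary tail estimate above, which is precisely what makes the hypotheses $\inf\kappa'>0$ and $\sup\kappa'<\infty$ necessary.
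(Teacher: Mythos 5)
Your proof is correct, and for the only non-trivial step it takes a genuinely different (and in fact sharper) route than the paper. The paper also reduces everything to the explicit formulas $U''=-\exp(-\kappa)$, $U'(x)=\int_x^\infty\exp(-\kappa(z))\,\dx z$, and handles $(\ln(-U''))''=-\kappa''$ and $U^{(3)}/U''=-\kappa'$ exactly as you do. But for the boundedness of $U'/U''$ and $U''/U'$ the paper argues qualitatively: it observes that these ratios are continuous on all of $\mathbb{R}$ and computes their limits as $x\to\pm\infty$ by L'H\^opital's rule (using that $\kappa'$ is monotone and bounded, so $\lim_{x\to\pm\infty}\kappa'(x)$ exist and lie in $[\varepsilon,\infty)$), concluding boundedness from finiteness of the limits plus continuity. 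You instead derive the two-sided tail estimate $\exp(-\kappa(x))/K\leq\int_x^\infty\exp(-\kappa(z))\,\dx z\leq\exp(-\kappa(x))/\varepsilon$ from the mean value theorem, which gives the uniform quantitative bounds $\varepsilon\leq -U''/U'\leq K$ and hence $|U'/U''|\leq 1/\varepsilon$ pointwise. Your version buys explicit constants and avoids invoking L'H\^opital and the existence of the limits of $\kappa'$ (you only need the two-sided bound on $\kappa'$, not its monotonicity); the paper's version is slightly shorter to state but less explicit. Both are complete proofs of the lemma.
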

\begin{proof}
Note that $\kappa'$ is non-decreasing and bounded, therefore $\lim_{x\rightarrow\infty}\kappa'(x)$ and $\lim_{x\rightarrow-\infty}\kappa'(x)$ exist. Also, $\lim_{x\rightarrow-\infty}\kappa'(x)\geq\varepsilon>0$. For very large or very small values $\kappa$ behaves linearly. It is also convex and strictly increasing. \\
We have
$$ U''(x)=-\exp(-\kappa(x)), $$
$$ U'(x)=\int_x^\infty \exp(-\kappa(y))\dx y. $$
Note that $\exp(-\kappa(x))\leq \exp(-\gamma x)$ for some fixed $\gamma>0$ and sufficiently large $x$. Therefore, the expression $\int_x^\infty \exp(-\kappa(y))\dx y$ is well-defined for all $x\in\mathbb{R}$ and bounded by $\frac{1}{\gamma}\exp(-\gamma x)$ for sufficiently large $x$. Therefore, the expression $$\int_x^\infty\int_y^\infty \exp(-\kappa(z))\dx z\dx y$$ is also well-defined. We observe furthermore:
\begin{itemize}
\item  $U'(x)>0$ for all $x\in\mathbb{R}$,
\item $U''(x)<0$ for all $x\in\mathbb{R}$,
\item $(\ln(-U''))''=-\kappa''(x)$ is non-positive and bounded.
\end{itemize}
\emph{We claim that $\frac{U'}{U''}$ is bounded:}

Clearly, $\frac{U'}{U''}<0$. Also, $\frac{U'}{U''}$ is continuous. Let us analyze its behaviour for $x\rightarrow\infty$ and $x\rightarrow-\infty$. We do this by L'H\^opital's rule: Clearly, $\lim_{x\rightarrow\infty} U''(x)=0$, $\lim_{x\rightarrow\infty} U'(x)=0$, $\lim_{x\rightarrow-\infty} U''(x)=-\infty$, $\lim_{x\rightarrow-\infty} U'(x)=\infty$, due to $\inf_{x\in\mathbb{R}}\kappa'(x)>0$. We have
$$ \lim_{x\rightarrow\infty}\frac{U'(x)}{U''(x)}=\lim_{x\rightarrow\infty}\frac{-\exp(-\kappa(x))}{\kappa'(x)\exp(-\kappa(x))}=
\lim_{x\rightarrow\infty}\frac{-1}{\kappa'(x)}=-\frac{1}{\lim_{x\rightarrow\infty}\kappa'(x)}>-\infty $$
and similarly
$$ \lim_{x\rightarrow-\infty}\frac{U'(x)}{U''(x)}=-\frac{1}{\lim_{x\rightarrow-\infty}\kappa'(x)}>-\infty. $$
Since $\frac{U'}{U''}$ is continuous $\inf_{x\in\mathbb{R}} \frac{U'}{U''}(x)>-\infty$ must hold and so $\frac{U'}{U''}$ is bounded.
\hfill\myqed
\vspace{1mm}\\
\emph{Also, we see that $\frac{U^{(3)}}{U''}$ is bounded:} $\frac{U^{(3)}}{U''}=\frac{\kappa'\exp(-\kappa)}{-\exp(-\kappa)}=-\kappa'$, which is bounded. \\
\vspace{1mm}
\emph{Finally we have to demonstrate that $\frac{U''}{U'}$ is bounded as well: }
\vspace{1mm} \\
This is again done by taking into account that $\frac{U''}{U'}$ is negative and continuous. Using L'H\^opital's rule:
$$ \lim_{x\rightarrow\infty}\frac{U''(x)}{U'(x)}=\lim_{x\rightarrow\infty}\frac{\kappa'(x)\exp(-\kappa(x))}{-\exp(-\kappa(x))}=-\lim_{x\rightarrow\infty}\kappa'(x)>-\infty, $$
$$ \lim_{x\rightarrow-\infty}\frac{U''(x)}{U'(x)}=\lim_{x\rightarrow-\infty}\frac{\kappa'(x)\exp(-\kappa(x))}{-\exp(-\kappa(x))}=-\lim_{x\rightarrow-\infty}\kappa'(x)>-\infty. $$
\end{proof}


In addition to condition (C1), we make the following assumption (C2) on $\theta$ and $H$:
\begin{description}
\item{(C2)} $\theta$ and $H$ depend on $\omega$ only through a standard, possibly high dimensional, diffusion. More precisely we assume that there is an $\mathbb{R}^{1\times N}$ - dimensional progressively measurable process $\tilde{X}$ on $[0,T]$ such that
$$ \tilde{X}_t=\tilde{x}+\int_{0}^t\tilde{\mu}(s,\tilde{X}_s)\dx s+\int_{0}^t\dx W^\top_s\tilde{\sigma}(s,\tilde{X}_s), $$
for some $\tilde{x}\in\mathbb{R}^{1\times N}$ and such that $H(X_T):=\tilde{H}(\tilde{X}_T,X_T)$ and $\theta_s:=\tilde\theta(s,\tilde{X}_s)$, for all $s\in[0,T]$, where 
\begin{itemize}
\item $\tilde{\mu},\tilde{\sigma}:[0,T]\times\mathbb{R}^{1\times N}\rightarrow\mathbb{R}^{1\times N},\mathbb{R}^{d\times N}$ are measurable, Lipschitz continuous in the second component with Lipschitz constants $L_{\tilde\mu,\tilde{x}}$, $L_{\tilde\sigma,\tilde{x}}$ and satisfy $\|\tilde\mu(\cdot,0)\|_\infty,\|\tilde\sigma\|_\infty<\infty$,
\item $\tilde\theta:[0,T]\times\mathbb{R}^{1\times N}\rightarrow\mathbb{R}^d$ is measurable, bounded and differentiable in the second component everywhere with a uniformly bounded derivative,
\item $\tilde H:\mathbb{R}^{1\times N}\times\mathbb{R}\rightarrow\mathbb{R}$ is bounded and Lipschitz continuous in both components with Lipschitz constants $L_{\tilde H,\tilde{x}}$, $L_{\tilde H,x}$, such that
\item $L_{\tilde H,x}<1$, where $x\in\mathbb{R}$ refers to the second component.
\end{itemize}
\end{description}

\begin{remark}
This assumption can be motivated by the following heuristic arguments:
\begin{itemize}
\item We suspect that it is possible to adequately approximate every ''non-pathological'' measurable $H\in L^\infty(\Omega,\mathcal{F}_T)$ by an $H$ with the above structural properties:
First approximate $H$ by a deterministic function of $(W_{t_i})_{i=1,\ldots,N}$ for finitely many times $t_i$ and then approximate every $W_{t_i}$ by the terminal value of a standard forward diffusion with vanishing drift and a volatility which assumes values between $0$ and $1$, is close to $1$ before time  $t_i$ and close to $0$ after that, such that $W_{t_i}$ is isolated.
\item A similar approximative argument could be applied to $\theta$.
\item In general, when trying to treat FBSDEs numerically assumptions which make the problem Markovian are usually made anyway (see e.g. \cite{benderdenk}).
\end{itemize}
\end{remark}

We show in section \ref{mainresult} that under assumptions (C1) and (C2) the problem \eqref{utility-FBSDE} has a unique solution $(X,Y,Z)$. As already mentioned this leads to an optimal strategy via \eqref{heyne-fbsde:opt:strat}. Consult \cite{HIRZ13}, Theorem 3.5. for the proof of optimality (note also Remark \ref{martingalerem} in this context). To show existence and uniqueness of $(X,Y,Z)$ in section \ref{mainresult} we use:

\section{The method of decoupling fields}\label{decf}

In this section we will briefly summarize the key results of the abstract theory of Markovian decoupling fields, we rely on later in the paper. The presented theory is derived from the SLC theory of Chapter 2 of \cite{Fromm2015} and is proven in \cite{Proemel2015}.

We consider families $(\mu,\sigma,f)$ of measurable functions, more precisely
$$ \mu: [0,T]\times\Omega\times\mathbb{R}^n\times\mathbb{R}^m\times\mathbb{R}^{m\times d}\longrightarrow \mathbb{R}^n, $$
$$ \sigma: [0,T]\times\Omega\times\mathbb{R}^n\times\mathbb{R}^m\times\mathbb{R}^{m\times d}\longrightarrow \mathbb{R}^{n\times d}, $$
$$ f: [0,T]\times\Omega\times\mathbb{R}^n\times\mathbb{R}^m\times\mathbb{R}^{m\times d}\longrightarrow \mathbb{R}^m, $$
where $n,m,d\in\mathbb{N}$ and $T>0$, $(\Omega,\mathcal{F},\mathbb{P},(\mathcal{F}_t)_{t\in[0,T]})$ is a complete filtered probability space,
such that $\mathcal{F}_0$ consists of all null sets, $\mathcal{F}=\mathcal{F}_T$ and
$\mathcal{F}_t=\sigma(\mathcal{F}_0,(W_s)_{s\in [0,t]})$ for all $t\in [0,T]$ holds,
 where $(W_t)_{t\in[0,T]}$ is a $d$-dimensional Brownian motion.

We want $\mu$, $\sigma$ and $f$ to be progressively measurable w.r.t. $(\mathcal{F}_t)_{t\in[0,T]}$, i.e. $\mu\mathbf{1}_{[0,t]},\sigma\mathbf{1}_{[0,t]},f\mathbf{1}_{[0,t]}$ must be $\mathcal{B}([0,T])\otimes\mathcal{F}_t\otimes\mathcal{B}(\mathbb{R}^n)\otimes\mathcal{B}(\mathbb{R}^m)\otimes\mathcal{B}(\mathbb{R}^{m\times d})$ - measurable for all $t\in[0,T]$. We will assume throughout the section that $\mu$, $\sigma$ and $f$ have this property without mentioning it.

A problem given by $\mu,\sigma,f,\xi$ is said to be \emph{Markovian}, if these four functions are deterministic, i.e. depend on $t,x,y,z$ only.
In the Markovian case we can somewhat relax the Lipschitz continuity assumptions of Chapter 2 of \cite{Fromm2015} and still obtain local existence together with uniqueness. What makes the Markovian case so special is the property
$$"Z_s=u_x(s,X_s)\cdot\sigma(s,X_s,Y_s,Z_s)"$$
which comes from the fact that $u$ will also be deterministic. This property allows us to bound $Z$ by a constant if we assume that $\sigma$ is bounded.

This potential boundedness of $Z$ in the Markovian case motivates the following definition, which will allows to develop a theory for non-Lipschitz problems:

\begin{defi}
Let $\xi:\Omega\times\mathbb{R}^n\rightarrow\mathbb{R}^m$ be measurable and let $t\in[0,T]$.\\
We call a function $u:[t,T]\times\Omega\times\mathbb{R}^n\rightarrow\mathbb{R}^m$ with $u(T,\omega,\cdot)=\xi(\omega,\cdot)$ for a.a. $\omega\in\Omega$ a \emph{Markovian decoupling field} for $\fbsde (\xi,(\mu,\sigma,f))$ on $[t,T]$ if for all $t_1,t_2\in[t,T]$ with $t_1\leq t_2$ and any $\mathcal{F}_{t_1}$ - measurable $X_{t_1}:\Omega\rightarrow\mathbb{R}^n$ there exist progressive processes $X,Y,Z$ on $[t_1,t_2]$ such that
\begin{itemize}
\item $X_s=X_{t_1}+\int_{t_1}^s\mu(r,X_r,Y_r,Z_r)\dx r+\int_{t_1}^s\sigma(r,X_r,Y_r,Z_r)\dx W_r$ a.s.,
\item $Y_s=Y_{t_2}-\int_{s}^{t_2}f(r,X_r,Y_r,Z_r)\dx r-\int_{s}^{t_2}Z_r\dx W_r$ a.s.,
\item $Y_s=u(s,X_s)$ a.s.
\end{itemize}
for all $s\in[t_1,t_2]$ \underline{and such that $\|Z\|_\infty<\infty$ holds}.\\ In particular, we want all integrals to be well-defined and $X,Y,Z$ to have values in $\mathbb{R}^n$, $\mathbb{R}^m$ and $\mathbb{R}^{m\times d}$ respectively. \\
Furthermore, we call a function $u:(t,T]\times\mathbb{R}^n\rightarrow\mathbb{R}^m$ a Markovian decoupling field for $\fbsde(\xi,(\mu,\sigma,f))$ on $(t,T]$ if $u$ restricted to $[t',T]$ is a Markovian decoupling field for all $t'\in(t,T]$.
\end{defi}

A Markovian decoupling field is always a decoupling field in the standard sense as well. The only difference between the two notions is that we are only interested in $X,Y,Z$, where $Z$ is a.s. bounded.\\
Regularity for Markovian decoupling fields is defined very similarly to standard regularity: 

We write $\mathbb{E}_{t,\infty}[X]$ for $\esssup\,\mathbb{E}[X|\mathcal{F}_t]$ in the following definition:

\begin{defi}
Let $u:[t,T]\times\Omega\times\mathbb{R}^n\rightarrow\mathbb{R}^m$ be a Markovian decoupling field to $\fbsde(\xi,(\mu,\sigma,f))$. We call $u$ \emph{weakly regular}, if
$L_{u,x}<L_{\sigma,z}^{-1}$ and $\sup_{s\in[t,T]}\|u(s,\cdot,0)\|_{\infty}<\infty$.

Furthermore, we call a weakly regular $u$ \emph{strongly regular} if for all fixed $t_1,t_2\in[t,T]$, $t_1\leq t_2,$ the processes $X,Y,Z$ arising in the defining property of a Markovian decoupling field
are a.e. unique for each \emph{constant} initial value $X_{t_1}=x\in\mathbb{R}^n$ and satisfy
\begin{equation}\label{STRongregul1}
\sup_{s\in [t_1,t_2]}\mathbb{E}_{t_1,\infty}[|X_s|^2]+\sup_{s\in [t_1,t_2]}\mathbb{E}_{t_1,\infty}[|Y_s|^2]
+\mathbb{E}_{t_1,\infty}\left[\int_{t_1}^{t_2}|Z_s|^2\dx s\right]<\infty\quad\forall x\in\mathbb{R}^n.
\end{equation}
 In addition they must be measurable as functions of $(x,s,\omega)$ and
even weakly differentiable w.r.t. $x\in\mathbb{R}^n$ such that for every $s\in[t_1,t_2]$ the mappings $X_s$ and $Y_s$ are measurable functions of $(x,\omega)$ and even weakly differentiable w.r.t. $x$ such that
\begin{multline}\label{STRongregul2}
\esssup_{x\in\mathbb{R}^n}\sup_{v\in S^{n-1}}\sup_{s\in [t_1,t_2]}\mathbb{E}_{t_1,\infty}\left[\left|\frac{\dx}{\dx x}X_s\right|^2_v\right]<\infty, \\
\esssup_{x\in\mathbb{R}^n}\sup_{v\in S^{n-1}}\sup_{s\in [t_1,t_2]}\mathbb{E}_{t_1,\infty}\left[\left|\frac{\dx}{\dx x}Y_s\right|^2_v\right]<\infty, \\
\esssup_{x\in\mathbb{R}^n}\sup_{v\in S^{n-1}}\mathbb{E}_{t_1,\infty}\left[\int_{t_1}^{t_2}\left|\frac{\dx}{\dx x}Z_s\right|^2_v\dx s\right]<\infty,
\end{multline}
where $S^{n-1}$ is the $(n-1)$ - dimensional sphere. \\
We say that a Markovian decoupling field $u$ on $[t,T]$ is \emph{strongly regular} on a subinterval $[t_1,t_2]\subseteq[t,T]$ if $u$ restricted to $[t_1,t_2]$ is a strongly regular Markovian decoupling field for $\fbsde(u(t_2,\cdot),(\mu,\sigma,f))$. \\
Furthermore, we say that a Markovian decoupling field $u:(t,T]\times\mathbb{R}^n\rightarrow\mathbb{R}^m$
\begin{itemize}
\item is weakly regular if $u$ restricted to $[t',T]$ is weakly regular for all $t'\in(t,T]$,
\item is strongly regular if $u$ restricted to $[t',T]$ is strongly regular for all $t'\in(t,T]$.
\end{itemize}
\end{defi}
For the following class of problems an existence and uniqueness theory is developed:
\begin{defi}
We say that $\xi,\mu,\sigma,f$ satisfy \emph{modified local Lipschitz conditions (MLLC)} if
\begin{itemize}
\item $\mu,\sigma,f$ are
\begin{itemize}
\item deterministic,
\item Lipschitz continuous in $x,y,z$ on sets of the form $[0,T]\times\mathbb{R}^n\times\mathbb{R}^{m} \times B$, where $B\subset \mathbb{R}^{m\times d}$ is an arbitrary bounded set
\item and such that $\|\mu(\cdot,0,0,0)\|_\infty,\|f(\cdot,0,0,0)\|_{\infty},\|\sigma(\cdot,\cdot,\cdot,0)\|_{\infty},L_{\sigma,z}<\infty$,
\end{itemize}
\item $\xi: \mathbb{R}^n\rightarrow \mathbb{R}^m$ satisfies $L_{\xi,x}<L_{\sigma,z}^{-1}$,
\end{itemize}
where $L_{\sigma,z}$ denotes the Lipschitz constant of $\sigma$ w.r.t. the dependence on the last component $z$ (and w.r.t. the Frobenius norms on $\mathbb{R}^{m\times d}$ and $\mathbb{R}^{n\times d}$).
By $L_{\sigma,z}^{-1}=\frac{1}{L_{\sigma,z}}$ we mean $\frac{1}{L_{\sigma,z}}$ if $L_{\sigma,z}>0$ and $\infty$ otherwise.
\end{defi}

The following natural concept introduces a type of Markovian decoupling field for non-Lipschitz problems (non-Lipschitz in $z$), to which nevertheless standard Lipschitz results can be applied.

\begin{defi}
Let $u$ be a Markovian decoupling field for $\fbsde(\xi,(\mu,\sigma,f))$. We call $u$ \emph{controlled in $z$} if there exists a constant $C>0$ such that for all $t_1,t_2\in[t,T]$, $t_1\leq t_2$, and all initial values $X_{t_1}$, the corresponding processes $X,Y,Z$ from the definition of a Markovian decoupling field satisfy $|Z_s(\omega)|\leq C$,
for almost all $(s,\omega)\in[t,T]\times\Omega$. If for a fixed triple $(t_1,t_2,X_{t_1})$ there are different choices for $X,Y,Z$, then all of them are supposed to satisfy the above control.

We say that a Markovian decoupling field $u$ on $[t,T]$ is \emph{controlled in $z$} on a subinterval $[t_1,t_2]\subseteq[t,T]$ if $u$ restricted to $[t_1,t_2]$ is a Markovian decoupling field for $\fbsde(u(t_2,\cdot),(\mu,\sigma,f))$ that is controlled in $z$.

Furthermore, we call a Markovian decoupling field on an interval $(s,T]$ \emph{controlled in $z$} if it is controlled in $z$ on every compact subinterval $[t,T]\subseteq (s,T]$ (with $C$ possibly depending on $t$).
\end{defi}

The following important result allows us to connect the MLLC - case to SLC.

\begin{thm}[Theorem 3.16 in \cite{Proemel2015}.]\label{CONtrollM}
Let $\mu,\sigma,f,\xi$ satisfy MLLC and assume that there exists a weakly regular Markovian decoupling field $u$ to this problem on some interval $[t,T]$. Then $u$ is controlled in $z$.
\end{thm}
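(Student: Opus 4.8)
Recall that ``controlled in $z$'' requires a bound on $|Z|$ that is \emph{uniform} in $t_1,t_2$ and in the initial value $X_{t_1}$; for each individual admissible triple the bound $\|Z\|_\infty<\infty$ is already built into the definition of a Markovian decoupling field, so the substance of the statement is precisely the uniformity. The plan is to establish the pointwise inequality
$$|Z_s|\le L_{u,x}\,\bigl|\sigma(s,X_s,Y_s,Z_s)\bigr|\qquad\text{for a.e. }(s,\omega),$$
and then to convert it into a uniform bound by a one-line self-improvement using the weak-regularity inequality $L_{u,x}L_{\sigma,z}<1$. The displayed inequality is the Markovian manifestation of the heuristic ``$Z_s=\nabla_x u(s,X_s)\,\sigma(s,X_s,Y_s,Z_s)$'' together with $\|\nabla_x u\|_\infty\le L_{u,x}$.

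To bring the FBSDE into the scope of standard Lipschitz estimates, I would fix an admissible triple $X,Y,Z$, set $R:=\|Z\|_\infty<\infty$, and replace $\mu,\sigma,f$ by $\mu_R,\sigma_R,f_R$, precomposing their $z$-argument with the $1$-Lipschitz projection $\Pi_R$ of $\mathbb R^{m\times d}$ onto $\ol B_R$. By MLLC the truncated data are globally Lipschitz in $(x,y,z)$, the Lipschitz constant of $\sigma_R$ in $z$ is still $\le L_{\sigma,z}$, and $\|\sigma_R(\cdot,\cdot,\cdot,0)\|_\infty=\|\sigma(\cdot,\cdot,\cdot,0)\|_\infty$, $\|\mu_R(\cdot,0,0,0)\|_\infty=\|\mu(\cdot,0,0,0)\|_\infty$, $\|f_R(\cdot,0,0,0)\|_\infty=\|f(\cdot,0,0,0)\|_\infty$, $L_{\xi,x}<L_{\sigma,z}^{-1}=L_{\sigma_R,z}^{-1}$, so $(\xi,(\mu_R,\sigma_R,f_R))$ satisfies SLC. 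Since $|Z_s|\le R$ we have $\Pi_R Z=Z$, hence the given triple also solves the FBSDE for the truncated data, and on the interval at hand the restriction of $u$ is a weakly regular Markovian decoupling field for an \emph{SLC} problem. The SLC regularity theory of Chapter~2 of \cite{Fromm2015} then applies: a weakly regular decoupling field of an SLC problem carries enough regularity — in particular weak differentiability of the flow $x\mapsto X_s$ with spatial derivative bounded by $L_{u,x}$ — to identify the martingale integrand of $Y_s=u(s,X_s)$, giving $Z_s=\nabla_x u(s,X_s)\,\sigma_R(s,X_s,Y_s,Z_s)$ a.e., and $\sigma_R=\sigma$ along this triple.

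Using the global $L_{\sigma,z}$-Lipschitz continuity of $\sigma$ in $z$ and $\|\sigma(\cdot,\cdot,\cdot,0)\|_\infty<\infty$ we obtain, for a.e.\ $(s,\omega)$,
$$|Z_s|\le L_{u,x}\bigl(\|\sigma(\cdot,\cdot,\cdot,0)\|_\infty+L_{\sigma,z}|Z_s|\bigr),$$
so $(1-L_{u,x}L_{\sigma,z})\,|Z_s|\le L_{u,x}\|\sigma(\cdot,\cdot,\cdot,0)\|_\infty$, and since weak regularity gives $L_{u,x}L_{\sigma,z}<1$,
$$|Z_s|\le C:=\frac{L_{u,x}\,\|\sigma(\cdot,\cdot,\cdot,0)\|_\infty}{1-L_{u,x}L_{\sigma,z}}\qquad\text{a.e. }(s,\omega).$$
The constant $C$ depends only on $L_{u,x}$, $L_{\sigma,z}$ and $\|\sigma(\cdot,\cdot,\cdot,0)\|_\infty$, hence is independent of $R$ and of $(t_1,t_2,X_{t_1})$; that is exactly what ``controlled in $z$'' asks for. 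For a decoupling field on a half-open interval $(s,T]$ one applies the above on each $[t',T]$ with the same $C$.

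\textbf{Main obstacle.} The self-improvement step is trivial once the inequality $|Z_s|\le L_{u,x}|\sigma(s,X_s,Y_s,Z_s)|$ is in hand, so the whole difficulty is proving that inequality for a decoupling field that is only weakly regular, i.e.\ merely Lipschitz — not differentiable — in $x$. I expect to handle it either by upgrading weak to strong regularity on sufficiently short subintervals (where small-time contraction arguments for the now-Lipschitz problem apply) and patching, or by a direct difference-quotient argument: compare the triples started from $X_{t_1}$ and from $X_{t_1}+\varepsilon v$, use $|Y_s-Y^\varepsilon_s|\le L_{u,x}|X_s-X^\varepsilon_s|$ together with the standard $L^2$ a priori estimates for the Lipschitz BSDE, and pass to the limit $\varepsilon\downarrow 0$ along Lebesgue points in $s$. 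A subsidiary point needing care is that $\nabla_x u$ exists only Lebesgue-a.e.\ in $x$, so one must ensure the flow $X_s$ does not charge the exceptional set, which again follows from the SLC regularity of the flow (or from an approximation/support argument).
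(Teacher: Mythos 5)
Note first that the paper does not prove this statement: it is imported verbatim as Theorem 3.16 of \cite{Proemel2015}, so there is no in-text proof to compare against. Your argument is, however, exactly the mechanism the paper advertises in Section \ref{decf} (``$Z_s=u_x(s,X_s)\cdot\sigma(s,X_s,Y_s,Z_s)$ \dots allows us to bound $Z$ by a constant''): derive the representation of $Z$ through $u_x$ and $\sigma$, then self-improve via $(1-L_{u,x}L_{\sigma,z})|Z_s|\le L_{u,x}\|\sigma(\cdot,\cdot,\cdot,0)\|_\infty$, which is precisely where weak regularity ($L_{u,x}<L_{\sigma,z}^{-1}$) and the MLLC finiteness conditions enter; the resulting constant is independent of $(t_1,t_2,X_{t_1})$ and of the particular admissible triple, which is the entire content of ``controlled in $z$''. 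This is the route taken in the cited source.

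One intermediate claim is over-stated and should be repaired along the lines you yourself sketch. After truncating at $R=\|Z\|_\infty$ for \emph{one} fixed triple, it does not follow that ``the restriction of $u$ is a weakly regular Markovian decoupling field for an SLC problem'': the decoupling-field property quantifies over all initial values and subintervals, and other admissible triples may have $\|Z\|_\infty>R$, so $u$ need not decouple the truncated system globally. The correct repair is your first proposed fix in the ``main obstacle'' paragraph: on subintervals $[s_1,s_2]$ short enough (depending on the truncated Lipschitz constants) the SLC theory of Chapter 2 of \cite{Fromm2015} produces its own strongly regular decoupling field for the terminal condition $u(s_2,\cdot)$; by uniqueness of solutions of the Lipschitz FBSDE it coincides with $u$ along the given triple, its Lipschitz constant in $x$ is still bounded by $L_{u,x}$, and strong regularity yields $Z_s=u_x(s,X_s)\,\sigma(s,X_s,Y_s,Z_s)$ a.e.\ there (this also disposes of your worry about the Lebesgue-null set where $u_x$ fails to exist, since the identity is obtained from the differentiated flow, not by composing a pointwise gradient with $X_s$). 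Because the final bound $C$ depends only on $L_{u,x}$, $L_{\sigma,z}$ and $\|\sigma(\cdot,\cdot,\cdot,0)\|_\infty$ — not on $R$ or on the subinterval — patching over the short subintervals and then over triples completes the proof. With that repair, which you have correctly identified as the crux, the proposal is sound.
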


As applications of analogous results for SLC problems from Chapter 2 of \cite{Fromm2015} one shows

\begin{thm}[Theorem 3.17 in \cite{Proemel2015}.]\label{UNIqMREGulM}$\,$
\begin{enumerate}
\item Let $\mu,\sigma,f,\xi$ satisfy MLLC and assume that there are two weakly regular Markovian decoupling fields $u^{(1)},u^{(2)}$ to this problem on some interval $[t,T]$.
Then $u^{(1)}=u^{(2)}$ (up to modifications).
\item Let $\mu,\sigma,f,\xi$ satisfy MLLC and assume that there exists a weakly regular Markovian decoupling field $u$ to this problem on some interval $[t,T]$. Then $u$ is strongly regular.
\end{enumerate}
\end{thm}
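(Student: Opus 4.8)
The plan is to reduce the MLLC problem to a standard Lipschitz (SLC) problem by truncating the coefficients in the $z$-variable, using that a weakly regular Markovian decoupling field is automatically controlled in $z$ (Theorem \ref{CONtrollM}), and then to quote the corresponding SLC results from Chapter 2 of \cite{Fromm2015}. Both parts follow from the same reduction.

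First I would apply Theorem \ref{CONtrollM}: for $u$ (resp. for each of $u^{(1)},u^{(2)}$) there is a constant $C>0$ so that every family $X,Y,Z$ arising in the definition of the Markovian decoupling field satisfies $|Z_s(\omega)|\le C$ a.e.; for part 1 take $C$ to be the larger of the two constants. Let $\rho:\mathbb{R}^{m\times d}\to\overline{B}_C(0)$ be the (1-Lipschitz) metric projection onto the closed ball of radius $C$, and set $\bar\mu(t,x,y,z):=\mu(t,x,y,\rho(z))$, $\bar\sigma(t,x,y,z):=\sigma(t,x,y,\rho(z))$, $\bar f(t,x,y,z):=f(t,x,y,\rho(z))$. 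Since $\mu,\sigma,f$ are Lipschitz in $(x,y,z)$ on $[0,T]\times\mathbb{R}^n\times\mathbb{R}^m\times\overline{B}_C(0)$ and $\rho$ is $1$-Lipschitz with values in that ball, the truncated triple is globally Lipschitz in $(x,y,z)$; moreover $\bar\mu(\cdot,0,0,0)=\mu(\cdot,0,0,0)$, $\bar f(\cdot,0,0,0)=f(\cdot,0,0,0)$, $\bar\sigma(\cdot,\cdot,\cdot,0)=\sigma(\cdot,\cdot,\cdot,0)$ are bounded, and $L_{\bar\sigma,z}\le L_{\sigma,z}$, so the structural inequality $L_{\xi,x}<L_{\sigma,z}^{-1}\le L_{\bar\sigma,z}^{-1}$ is preserved. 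Hence $(\xi,(\bar\mu,\bar\sigma,\bar f))$ satisfies the SLC hypotheses.

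Next I would note that $\rho$ acts as the identity on $\{|z|\le C\}$, so every bounded-$Z$ family associated with $u$ (resp. $u^{(1)},u^{(2)}$) for the original coefficients solves the forward and backward equations with $\bar\mu,\bar\sigma,\bar f$ as well (and conversely on that set). Thus $u$ (resp. $u^{(1)},u^{(2)}$) is also a decoupling field for $(\xi,(\bar\mu,\bar\sigma,\bar f))$, and it is weakly regular there because $L_{u,x}<L_{\sigma,z}^{-1}\le L_{\bar\sigma,z}^{-1}$ and $\sup_s\|u(s,\cdot,0)\|_\infty<\infty$ are unchanged. Now invoking the SLC theory of Chapter 2 of \cite{Fromm2015}: uniqueness of weakly regular decoupling fields for the Lipschitz problem forces $u^{(1)}=u^{(2)}$ up to modification, which is part 1; and the SLC statement that a weakly regular decoupling field is strongly regular gives strong regularity of $u$ for the truncated problem.

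Finally, for part 2 I would transfer strong regularity back to the MLLC problem. For each fixed $t_1\le t_2$ and each constant initial value, the MLLC field $u$ supplies solution processes $X,Y,Z$ with $\|Z\|_\infty\le C$; these solve the truncated SLC FBSDE with $Y_s=u(s,X_s)$, hence by SLC uniqueness they coincide with the (a.e. unique) SLC solution processes, and any other MLLC solution, having $|Z|\le C$, likewise solves the SLC problem and therefore agrees with them. Consequently the bounds \eqref{STRongregul1} and \eqref{STRongregul2} and the weak $x$-differentiability established for the SLC problem hold verbatim for the MLLC solution processes, so $u$ restricted to $[t_1,t_2]$ is a strongly regular Markovian decoupling field for $\fbsde(u(t_2,\cdot),(\mu,\sigma,f))$. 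The main obstacle is exactly this two-way identification: one must check that the truncation is invisible to all admissible solution families — which is precisely what "controlled in $z$" guarantees — and that the constraint $L_{\xi,x}<L_{\sigma,z}^{-1}$ survives it, so that the SLC machinery both applies and has conclusions that can be pulled back without loss.
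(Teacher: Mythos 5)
Your reduction is correct and is precisely the route the paper indicates: it states this result as Theorem 3.17 of \cite{Proemel2015}, obtained ``as applications of analogous results for SLC problems'' once Theorem \ref{CONtrollM} supplies the uniform bound on $Z$, which is exactly your truncation-by-projection argument. The two-way identification you flag (the truncation being invisible on $\{|z|\le C\}$ and the inequality $L_{\xi,x}<L_{\sigma,z}^{-1}\le L_{\bar\sigma,z}^{-1}$ being preserved) is indeed the whole content of the reduction, and you handle it correctly.
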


Existence of weakly regular decoupling fields implies existence and uniqueness of classical solutions:

\begin{lemma}[Theorem 3.18 in \cite{Proemel2015}.]\label{UNIqXYZM}
Let $\mu,\sigma,f,\xi$ satisfy MLLC and assume that there exists a weakly regular Markovian decoupling field $u$ on some interval $[t,T]$.\\ Then for any initial condition $X_t=x\in\mathbb{R}^n$ there is a unique solution $(X,Y,Z)$ of the FBSDE on $[t,T]$ such that
$$\sup_{s\in[t,T]}\mathbb{E}[|X_s|^2]+\sup_{s\in[t,T]}\mathbb{E}[|Y_s|^2]+\|Z\|_\infty<\infty.$$
\end{lemma}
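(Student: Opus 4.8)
The plan is to reduce everything to the standard-Lipschitz (SLC) theory of Chapter~2 of \cite{Fromm2015} by exploiting the boundedness of the control process, and then to read existence off the defining property of the decoupling field together with strong regularity.

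First I would harvest the two consequences that come for free. Since $\mu,\sigma,f,\xi$ satisfy MLLC and a weakly regular Markovian decoupling field $u$ exists on $[t,T]$, Theorem~\ref{CONtrollM} yields a constant $C>0$ such that \emph{every} triple $X,Y,Z$ produced by the defining property of $u$ obeys $|Z_s(\omega)|\le C$ for a.e.\ $(s,\omega)$, while Theorem~\ref{UNIqMREGulM}(2) promotes $u$ to a strongly regular decoupling field. For \textbf{existence}, apply the defining property with $t_1=t$, $t_2=T$ and the constant initial value $X_t=x$: this gives progressive processes $(X,Y,Z)$ on $[t,T]$ solving the forward and backward equations with $Y_s=u(s,X_s)$, and $\|Z\|_\infty\le C<\infty$ by the control just noted. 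Strong regularity, i.e.\ \eqref{STRongregul1} on $[t_1,t_2]=[t,T]$, then supplies $\sup_{s\in[t,T]}\mathbb{E}[|X_s|^2]+\sup_{s\in[t,T]}\mathbb{E}[|Y_s|^2]<\infty$; together with $\|Z\|_\infty\le C$ this is exactly the required integrability.

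For \textbf{uniqueness}, let $(X,Y,Z)$ be an arbitrary solution on $[t,T]$ with $M:=\|Z\|_\infty<\infty$ and $\sup_s\mathbb{E}[|X_s|^2]+\sup_s\mathbb{E}[|Y_s|^2]<\infty$. Put $R:=\max(M,C)$, let $\chi$ be the nearest-point retraction of $\mathbb{R}^{m\times d}$ onto the closed ball of radius $R$ (which is $1$-Lipschitz), and set $\bar\mu:=\mu(\cdot,\cdot,\cdot,\chi(\cdot))$, $\bar\sigma:=\sigma(\cdot,\cdot,\cdot,\chi(\cdot))$, $\bar f:=f(\cdot,\cdot,\cdot,\chi(\cdot))$. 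Using the MLLC bounds one checks that $\bar\mu,\bar\sigma,\bar f$ are globally Lipschitz in $(x,y,z)$ with the same affine bounds at the origin and $L_{\bar\sigma,z}\le L_{\sigma,z}$, so $(\bar\mu,\bar\sigma,\bar f,\xi)$ satisfies SLC (in particular $L_{\xi,x}<L_{\sigma,z}^{-1}\le L_{\bar\sigma,z}^{-1}$). Since every process attached to $u$ has $|Z|\le C\le R$, $u$ is still a weakly regular decoupling field for the modified problem; and since our fixed solution has $|Z|\le M\le R$, both it and the solution from the existence step solve the modified FBSDE with finite $L^2$-norm and initial value $x$. The SLC analogue of the present statement (Chapter~2 of \cite{Fromm2015}) asserts that, in the presence of a weakly regular decoupling field, such a solution is unique and equals the one generated by the decoupling field, i.e.\ satisfies $Y_s=u(s,X_s)$; hence the two solutions coincide, and uniqueness follows.

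The step I expect to be the main obstacle is precisely the reduction to SLC in the uniqueness argument: one must verify that the $z$-truncation really lands in the SLC class \emph{and} that $u$ survives as a weakly regular decoupling field for the truncated data, the delicate point being a competitor solution whose $\|Z\|_\infty$ may a priori exceed the control constant $C$ — which is why the truncation level must be taken to be $R=\max(M,C)$, and why one needs the (easy but necessary) observation that raising the truncation level above $C$ does not alter the SLC-unique solution. The remaining ingredients — the forward/backward dynamics and the quadratic estimates behind \eqref{STRongregul1} — are routine once these reductions are in place.
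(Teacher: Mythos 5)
Your proposal cannot be compared against a proof in the paper, because the paper offers none: Lemma \ref{UNIqXYZM} is imported verbatim as Theorem 3.18 of \cite{Proemel2015}, with no argument given. Judged on its own merits, your proof is correct and is in fact the natural derivation from the other imported ingredients. The existence half is exactly as you say: the defining property of the Markovian decoupling field with $t_1=t$, $t_2=T$ and constant initial value produces $(X,Y,Z)$ with $Y_s=u(s,X_s)$; Theorem \ref{CONtrollM} bounds $\|Z\|_\infty$ by the control constant $C$; and strong regularity from Theorem \ref{UNIqMREGulM}(2) gives \eqref{STRongregul1}, which dominates $\sup_s\mathbb{E}[|X_s|^2]+\sup_s\mathbb{E}[|Y_s|^2]$. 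The uniqueness half correctly identifies the one delicate point — a competitor whose $\|Z\|_\infty=M$ may exceed $C$ — and handles it by truncating at $R=\max(M,C)$ with the $1$-Lipschitz nearest-point retraction; the checks that the truncated data satisfy SLC (same values at the origin since $\chi(0)=0$, $L_{\bar\sigma,z}\le L_{\sigma,z}$ so $L_{\xi,x}<L_{\bar\sigma,z}^{-1}$), that $u$ remains weakly regular for the truncated problem because all of its associated control processes satisfy $|Z|\le C\le R$, and that both candidate solutions solve the truncated FBSDE, are all sound. The only caveat is that the argument ultimately rests on the SLC existence-and-uniqueness theorem of Chapter 2 of \cite{Fromm2015}, which this paper also only cites; within the paper's framework that black box is unavoidable, and your reduction to it is precisely the mechanism that Theorem \ref{CONtrollM} exists to enable.
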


\begin{defi}
Let $I^{M}_{\mathrm{max}}\subseteq[0,T]$ for $\fbsde(\xi,(\mu,\sigma,f))$ be the union of all intervals $[t,T]\subseteq[0,T]$ such that there exists a weakly regular Markovian decoupling field $u$ on $[t,T]$.
\end{defi}

\begin{thm}[Global existence in weak form, Theorem 3.21 in \cite{Proemel2015}.]\label{GLObalexistM}
Let $\mu,\sigma,f,\xi$ satisfy MLLC. Then there exists a unique weakly regular Markovian decoupling field $u$ on $I^M_{\mathrm{max}}$. This $u$ is also controlled in $z$, strongly regular, deterministic and continuous. \\
Furthermore, either $I^{M}_{\mathrm{max}}=[0,T]$ or $I^{M}_{\mathrm{max}}=(t^{M}_{\mathrm{min}},T]$, where $0\leq t^{M}_{\mathrm{min}}<T$.
\end{thm}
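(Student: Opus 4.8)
\medskip
\noindent\textbf{Proof proposal.}
The plan is to patch the locally defined weakly regular Markovian decoupling fields into a single field on $I^{M}_{\mathrm{max}}$, transport the remaining regularity properties through the patching, and then rule out that $I^{M}_{\mathrm{max}}$ is a nontrivial closed interval by extending to the left. For the patching step the key observation is consistency: if $u^{(1)},u^{(2)}$ are weakly regular Markovian decoupling fields on $[t_1,T]$ and $[t_2,T]$ with $t_1\le t_2$, then the restriction of $u^{(1)}$ to $[t_2,T]$ is again a weakly regular Markovian decoupling field there, so Theorem~\ref{UNIqMREGulM}(1) forces $u^{(1)}=u^{(2)}$ on $[t_2,T]$ up to modification. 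Hence the function $u$ obtained by patching over all $[t,T]\subseteq[0,T]$ carrying such a field is well defined on $I^{M}_{\mathrm{max}}$, and it is itself a weakly regular Markovian decoupling field, because both the defining property and weak regularity refer only to compact subintervals $[t_1,t_2]$, each of which lies inside some $[t,T]$ on which a weakly regular field exists. Uniqueness of $u$ on $I^{M}_{\mathrm{max}}$ is then immediate from uniqueness on each $[t,T]$.

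Next I would derive the additional properties. Controlledness in $z$ follows from Theorem~\ref{CONtrollM} applied on every compact $[t,T]\subseteq I^{M}_{\mathrm{max}}$, which is precisely the meaning of being controlled in $z$ on $I^{M}_{\mathrm{max}}$, and strong regularity on each such $[t,T]$ follows from Theorem~\ref{UNIqMREGulM}(2). That $u$ is deterministic I would obtain from uniqueness: for a \emph{constant} initial value $X_{t_1}=x$ the coefficients (MLLC) and the terminal datum of the FBSDE on $[t_1,T]$ do not depend on $\mathcal{F}_{t_1}$, so the a.s.\ unique solution can be realized as a functional of the Brownian increments after $t_1$, which are independent of $\mathcal{F}_{t_1}$; since $Y_{t_1}=u(t_1,x)$ is also $\mathcal{F}_{t_1}$-measurable, it is independent of itself, hence $\mathbb{P}$-a.s.\ constant. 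Continuity of $u$ in $x$, uniformly in $s$, is the bound $L_{u,x}<L_{\sigma,z}^{-1}$ of weak regularity; continuity in $s$ for fixed $x$ follows from the $L^2$-continuity of the associated process $Y$ (read off from the strong-regularity estimate \eqref{STRongregul1}) together with the continuity of $X$ and the uniform Lipschitz bound on $u$ in $x$, and joint continuity then follows from these two facts.

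Finally I would determine the shape of $I^{M}_{\mathrm{max}}$. Being a union of intervals all sharing the right endpoint $T$, $I^{M}_{\mathrm{max}}$ is an interval, so it equals $[t^{M}_{\mathrm{min}},T]$ or $(t^{M}_{\mathrm{min}},T]$ with $t^{M}_{\mathrm{min}}:=\inf I^{M}_{\mathrm{max}}$. A short-time existence result --- the contraction argument on small intervals available in the underlying theory of \cite{Fromm2015,Proemel2015} under $L_{\xi,x}<L_{\sigma,z}^{-1}$ --- produces a weakly regular field on some $[T-\varepsilon,T]$, so $t^{M}_{\mathrm{min}}<T$. To exclude $I^{M}_{\mathrm{max}}=[t^{M}_{\mathrm{min}},T]$ with $t^{M}_{\mathrm{min}}>0$, suppose $u$ is weakly regular on $[t^{M}_{\mathrm{min}},T]$; then $u(t^{M}_{\mathrm{min}},\cdot)$ is Lipschitz with constant $\le L_{u,x}<L_{\sigma,z}^{-1}$, hence an admissible terminal condition, so short-time existence on $[t^{M}_{\mathrm{min}}-\varepsilon,t^{M}_{\mathrm{min}}]$ with this datum, patched with $u$ as in the first paragraph, yields a weakly regular Markovian decoupling field on $[t^{M}_{\mathrm{min}}-\varepsilon,T]$ --- contradicting the definition of $I^{M}_{\mathrm{max}}$. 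I expect the genuine difficulty to lie here: one needs a \emph{quantitative} short-time existence theorem whose step size depends only on the structural MLLC constants and on the Lipschitz constant of the terminal condition, one must use that the strong-regularity bounds keep $L_{u,x}$ \emph{strictly} below the critical value $L_{\sigma,z}^{-1}$ so that $u(t^{M}_{\mathrm{min}},\cdot)$ really is an admissible new terminal condition, and one must check that patching two weakly regular fields again yields a weakly regular field (its Lipschitz constant being the maximum of the two, still subcritical). By comparison the deterministic and continuity claims are routine, although they still rely on the a priori estimates and the triviality of $\mathcal{F}_0$ rather than being purely formal.
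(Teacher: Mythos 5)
The paper does not actually prove this statement: it is imported verbatim as Theorem 3.21 of \cite{Proemel2015}, so there is no in-text proof to compare against. Your outline is nevertheless the standard argument from that reference and from Chapter 2 of \cite{Fromm2015}: patching via the uniqueness statement of Theorem \ref{UNIqMREGulM}(1), transporting controlledness in $z$ and strong regularity from Theorems \ref{CONtrollM} and \ref{UNIqMREGulM}(2), the independence-of-increments argument for determinism, and the ``extend past a closed left endpoint'' contradiction to rule out $I^{M}_{\mathrm{max}}=[t^{M}_{\mathrm{min}},T]$ with $t^{M}_{\mathrm{min}}>0$. The structure is right and you correctly locate the crux in the quantitative small-time existence step.

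The one place where your argument as written has a real gap is precisely that step in the MLLC (as opposed to SLC) setting: the contraction argument you invoke requires $f$ to be globally Lipschitz in $z$, whereas under MLLC $f$ is Lipschitz only on sets $[0,T]\times\mathbb{R}^n\times\mathbb{R}^m\times B$ with $B$ bounded (indeed in the application the driver is quadratic in $z$). So you cannot apply short-time existence on $[t^{M}_{\mathrm{min}}-\varepsilon,\,t^{M}_{\mathrm{min}}]$ directly. The resolution in \cite{Proemel2015} is to truncate $f$ in $z$ outside a ball whose radius is the a priori bound furnished by the mechanism behind Theorem \ref{CONtrollM} (namely $|Z|\le L_{u,x}\,|\sigma|\le L_{u,x}(\|\sigma(\cdot,\cdot,\cdot,0)\|_\infty+L_{\sigma,z}|Z|)$ combined with $L_{u,x}L_{\sigma,z}<1$, which is where the subcriticality of $u(t^{M}_{\mathrm{min}},\cdot)$ enters a second time), apply the SLC small-interval theory to the truncated, now genuinely Lipschitz problem, and then check that the solution's $Z$ stays inside the ball so that it also solves the original problem. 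Without this detour the ``quantitative short-time existence theorem whose step size depends only on the structural MLLC constants'' that you postulate does not exist, because the relevant Lipschitz constant of $f$ is infinite. Everything else in your proposal (patching, determinism via triviality of the increments' dependence on $\mathcal{F}_{t_1}$, continuity from the strong-regularity estimates \eqref{STRongregul1}--\eqref{STRongregul2}) is sound.
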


The following result basically states that for a singularity $t^{M}_{\mathrm{min}}$ to occur $u_x$ has to "explode" at $t^M_{\mathrm{min}}$. It is the key to showing well-posedness for particular problems via contradiction.

\begin{lemma}[Lemma 3.22 in \cite{Proemel2015}.]\label{EXPlosionM}
Let $\mu,\sigma,f,\xi$ satisfy MLLC. If $I^{M}_{\mathrm{max}}=(t^{M}_{\mathrm{min}},T]$, then
$$\lim_{t\downarrow t^{M}_{\mathrm{min}}}L_{u(t,\cdot),x}=L_{\sigma,z}^{-1},$$
where $u$ is the unique weakly regular Markovian decoupling field from Theorem \ref{GLObalexistM}.
\end{lemma}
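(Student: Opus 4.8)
The plan is to argue by contradiction from the maximality of $I^{M}_{\mathrm{max}}$: if $L_{u(t,\cdot),x}$ did not tend to $L_{\sigma,z}^{-1}$ as $t\downarrow t^{M}_{\mathrm{min}}$, I would be able to continue the maximal Markovian decoupling field $u$ strictly to the left of $t^{M}_{\mathrm{min}}$, which is absurd. First note that one inequality is automatic: for every $t\in(t^{M}_{\mathrm{min}},T]$ the restriction of $u$ to $[t,T]$ is weakly regular, so $L_{u(t,\cdot),x}\le\sup_{s\in[t,T]}L_{u(s,\cdot),x}<L_{\sigma,z}^{-1}$, whence $\limsup_{t\downarrow t^{M}_{\mathrm{min}}}L_{u(t,\cdot),x}\le L_{\sigma,z}^{-1}$. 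Thus it suffices to exclude $\liminf_{t\downarrow t^{M}_{\mathrm{min}}}L_{u(t,\cdot),x}<L_{\sigma,z}^{-1}$. Assume therefore, for contradiction, that there are a real number $M<L_{\sigma,z}^{-1}$ and times $t_k\downarrow t^{M}_{\mathrm{min}}$ with $L_{u(t_k,\cdot),x}\le M$ for all $k$.

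For each $k$ I would look at the FBSDE with coefficients $(\mu,\sigma,f)$, horizon $t_k$ and terminal condition $\xi_k:=u(t_k,\cdot)$; this still satisfies MLLC, since only $L_{\xi_k,x}\le M<L_{\sigma,z}^{-1}$ and finiteness of $\xi_k(0)=u(t_k,0)$ have to be checked. The decisive ingredient is a \emph{uniform} local existence statement: there should exist $\delta>0$ and $\ell^\ast\in(0,L_{\sigma,z}^{-1})$, depending only on $M$ and on the fixed data ($n,m,d$, $L_{\sigma,z}$, the sup-norms $\|\mu(\cdot,0,0,0)\|_\infty$, $\|f(\cdot,0,0,0)\|_\infty$, $\|\sigma(\cdot,\cdot,\cdot,0)\|_\infty$, and the Lipschitz constants of $\mu,\sigma,f$ in $x,y$ and on bounded $z$-sets), but \emph{not} on $k$, such that $\fbsde(\xi_k,(\mu,\sigma,f))$ admits a weakly regular Markovian decoupling field $v_k$ on $[\max(0,t_k-\delta),t_k]$ with $\sup_s L_{v_k(s,\cdot),x}\le\ell^\ast$. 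I would derive this from the SLC local existence and continuation theory of Chapter 2 of \cite{Fromm2015} via the reduction underlying Theorem \ref{CONtrollM}: along any weakly regular field whose $x$-Lipschitz constant stays below $\ell<L_{\sigma,z}^{-1}$ the Markovian identity $Z_s=u_x(s,X_s)\cdot\sigma(s,X_s,Y_s,Z_s)$ and $|\sigma(s,x,y,z)|\le\|\sigma(\cdot,\cdot,\cdot,0)\|_\infty+L_{\sigma,z}|z|$ give the a priori bound $|Z_s|\le\|\sigma(\cdot,\cdot,\cdot,0)\|_\infty/(1-L_{\sigma,z}\ell)$, so the generator may be treated as Lipschitz in $z$ on the corresponding ball, and the length of the interval on which the SLC construction converges then depends only on the listed data and the margin $L_{\sigma,z}^{-1}-M$.

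Then I would glue. Since $v_k(t_k,\cdot)=u(t_k,\cdot)$, the concatenation $\hat u_k$ that equals $v_k$ on $[\max(0,t_k-\delta),t_k]$ and $u$ on $[t_k,T]$ is, by the standard concatenation property of Markovian decoupling fields, again a Markovian decoupling field for $\fbsde(\xi,(\mu,\sigma,f))$ on $[\max(0,t_k-\delta),T]$. It is weakly regular: on the left piece $\sup_s L_{\hat u_k(s,\cdot),x}\le\ell^\ast<L_{\sigma,z}^{-1}$ and $\|Z\|_\infty<\infty$, while on $[t_k,T]$ the field $u$ is weakly regular (because $[t_k,T]\subseteq I^{M}_{\mathrm{max}}$), so $\sup_{s\in[t_k,T]}L_{u(s,\cdot),x}<L_{\sigma,z}^{-1}$ and $\|Z\|_\infty<\infty$ there too (these last constants may depend on $k$, which is harmless). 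Hence $[\max(0,t_k-\delta),T]\subseteq I^{M}_{\mathrm{max}}=(t^{M}_{\mathrm{min}},T]$, which forces $t_k-\delta=\max(0,t_k-\delta)>t^{M}_{\mathrm{min}}$ (using $t^{M}_{\mathrm{min}}\ge0$), i.e. $t_k>t^{M}_{\mathrm{min}}+\delta$, for all $k$ --- contradicting $t_k\downarrow t^{M}_{\mathrm{min}}$. This yields $\liminf_{t\downarrow t^{M}_{\mathrm{min}}}L_{u(t,\cdot),x}\ge L_{\sigma,z}^{-1}$, and together with the upper bound the limit exists and equals $L_{\sigma,z}^{-1}$.

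The step I expect to be the main obstacle is the uniform local existence statement --- specifically, showing that the continuation length $\delta$ can be chosen independently of $k$. Heuristically this is clear, because $\delta$ is governed only by the margin $L_{\sigma,z}^{-1}-M$ and the fixed data (the $z$-dependence of $\mu,\sigma,f$ enters only through their Lipschitz constants on a ball whose radius is itself controlled by that margin and the data), but making it rigorous requires carefully tracking the constants through the SLC machinery of \cite{Fromm2015} and its MLLC counterpart in \cite{Proemel2015}.
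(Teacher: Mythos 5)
Your argument is correct and is essentially the proof of this result in the cited reference \cite{Proemel2015} (the paper itself only quotes the lemma): the upper bound comes from weak regularity on $I^{M}_{\mathrm{max}}$, and the lower bound from a contradiction via uniform local existence with step length depending only on the margin $L_{\sigma,z}^{-1}-M$ and the fixed data, the reduction from MLLC to the Lipschitz case being exactly the a priori bound on $Z$ through $Z_s=u_x(s,X_s)\sigma(s,X_s,Y_s,Z_s)$ that underlies Theorem \ref{CONtrollM}, followed by concatenation at $t_k$. The one step you flag as the obstacle, the $k$-independence of $\delta$, is precisely what the SLC small-interval existence theory of Chapter 2 of \cite{Fromm2015} supplies, so your proof is complete modulo that imported lemma.
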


\section{Solving the FBSDE}\label{solvingthefbsde}

Before showing well-posedness of \eqref{utility-FBSDE} we first prove the more abstract Theorem \ref{sc3}:

\subsection{An abstract result}

For some $\varepsilon>0$ consider a forward-backward system of the form
\begin{multline}\label{epssystem}
\tilde{X}_s=\tilde{x}+\int_{t}^s\frac{1}{\varepsilon}\tilde{\mu}(r,\varepsilon\tilde{X}_r)\dx r+\int_{t}^s\dx W^\top_r\frac{1}{\varepsilon}\tilde{\sigma}(r,\varepsilon\tilde{X}_r), \\
\overline{X}_s=\overline{x}+\int_{t}^s\overline{\mu}(r,\varepsilon\tilde{X}_r)\dx r+\int_{t}^s\dx W^\top_r\overline{\sigma}(r,\varepsilon\tilde{X}_r,\overline{X}_r,Y_r,Z_r), \\
Y_s=\xi(\varepsilon\tilde{X}_T,\overline{X}_T)-\int_s^T f(r,\varepsilon\tilde{X}_r,\overline{X}_r,Y_r,Z_r)\dx r-\int_s^T\dx W^\top_r Z_r
\end{multline}
a.s. for all $s\in [t,T]$, where $\tilde{X}$ is $N$-dimensional, $N\in\mathbb{N}$, and $\overline{X},Y$ are real-valued. We assume that
\begin{itemize}
\item $\tilde{\mu},\tilde{\sigma},\overline{\mu}:[0,T]\times\mathbb{R}^{1\times N}\rightarrow\mathbb{R}^{1\times N},\mathbb{R}^{d\times N},\mathbb{R}$ are measurable and Lipschitz continuous in the second component with Lipschitz constants $L_{\tilde\mu,\tilde{x}}$, $L_{\tilde\sigma,\tilde{x}}$, $L_{\overline\mu,\tilde{x}}$ and such that 
$$\|\tilde\mu(\cdot,0)\|_\infty, \|\tilde\sigma\|_\infty, \|\overline\mu(\cdot,0)\|_\infty<\infty,$$
\item $\overline{\sigma}:[0,T]\times\mathbb{R}^{1\times N}\times\mathbb{R}\times\mathbb{R}\times\mathbb{R}^d\rightarrow\mathbb{R}^d$ is measurable, Lipschitz continuous in the last four components and satisfies $\|\overline{\sigma}(\cdot,\cdot,\cdot,\cdot,0)\|_\infty<\infty$,
\item $f:[0,T]\times\mathbb{R}^{1\times N}\times\mathbb{R}\times\mathbb{R}\times\mathbb{R}^d\rightarrow\mathbb{R}$ is measurable and Lipschitz continuous in the last four components on sets of the form $[0,T]\times\mathbb{R}^{1\times N}\times\mathbb{R}\times\mathbb{R}\times B$, where $B\subseteq \mathbb{R}^d$ is bounded. We also assume $\|f(\cdot,0,0,0,0)\|_\infty<\infty$.
\item $\xi:\mathbb{R}^{1\times N}\times\mathbb{R}\rightarrow\mathbb{R}$ is Lipschitz continuous in both components with the two Lipschitz constants $L_{\xi,\tilde{x}}$ and $L_{\xi,\overline{x}}$. We assume that $L_{\xi,\overline{x}}<L_{\overline{\sigma},z}^{-1}$, where $L_{\overline{\sigma},z}$ refers to the Lipschitz constant of $\overline{\sigma}$ w.r.t. the last component. Furthermore, $L_{\xi}$ refers to the Lipschitz constant of $\xi$ w.r.t. the Euclidian norm on $\mathbb{R}^{1\times N}\times\mathbb{R}$.
\end{itemize}
The problem is to find progressively measurable processes $\tilde{X},\overline{X},Y,Z$ s.t. $\tilde{X}$ is $\mathbb{R}^{1\times N}$ - valued, $\overline{X}$ and $Y$ are both $\mathbb{R}$ - valued, $Z$ is $\mathbb{R}^{d}$ - valued and such that \eqref{epssystem} is satisfied.

Note that for varying $\varepsilon>0$ the different problems are equivalent to each other in the following sense: If $\tilde{X}^{\varepsilon_1},\overline{X},Y,Z$ solve (\ref{epssystem}) for some $\varepsilon_1>0$ on some interval $[t,T]$, then $\tilde{X}^{\varepsilon_2}:=\frac{\varepsilon_1}{\varepsilon_2}\tilde{X}^{\varepsilon_1},\overline{X},Y,Z$ solve (\ref{epssystem}) for some $\varepsilon_2>0$ on the same interval. This means that we can choose the parameter $\varepsilon>0$ as we like without changing the nature of the problem. In particular, if we define the terminal condition $\xi^{\varepsilon}$ via $\xi^{\varepsilon}(\tilde{x},\overline{x}):=\xi(\varepsilon\tilde{x},\overline{x})$, we can ensure that the Lipschitz constant $L_{\xi^{\varepsilon}}$ of $\xi^{\varepsilon}$ satisfies $L_{\xi^{\varepsilon}}<L_{\overline{\sigma},z}$ by choosing $\varepsilon$ small enough! This explains why we work with the parameter $\varepsilon>0$.

Also, note that (\ref{epssystem}) describes a Markovian problem, which satisfies MLLC (for $\varepsilon$ small enough), such that the theory previously described is well applicable: The forward equation is $(N+1)$ - dimensional and the backward equation has dimension $1$. Also, observe that the first $N$ components of the forward equation do not depend on the rest of the problem, i.e. $\tilde{X}$ depends only on the parameters $\tilde{x},\tilde{\mu},\tilde{\sigma}$ and $\varepsilon$.

Assume that we have $d_1\in\mathbb{N}$, $d_2\in\mathbb{N}_0$ such that $d_1+d_2=d$, which is the dimension of our Brownian motion $W$. We denote by $\pi_1:\mathbb{R}^d\rightarrow\mathbb{R}^d$ the linear operator which sets the last $d_2$ components of a vector to zero and leaves the first $d_1$ unchanged. Similarly, $\pi_2$ is the operator which sets the first $d_1$ components of a vector in $\mathbb{R}^d$ to zero without changing the others.\\
To be able to treat the above MLLC problem we make the following structural requirements for $f$:
\begin{itemize}
\item $f$ can be written as a function of $t,\tilde{x},\overline{x}+y,\pi_2(z)$,
\item $f$ is (classically) differentiable in $(\tilde{x},\overline{x}+y,z)$ everywhere with $\frac{\dx}{\dx (\overline{x}+y)}f\geq 0$,
\item $|\frac{\dx}{\dx z}f(s,\tilde{x},\overline{x}+y,z)|\leq C(1+|z|)$ for all $s,\tilde{x},\overline{x},y,z$ with some constant $C>0$,
\item $|\frac{\dx}{\dx (\overline{x}+y)}f(s,\tilde{x},\overline{x}+y,z)|\leq C(1+|z|^2)$ for all $s,\tilde{x},\overline{x},y,z$ with some constant $C>0$,
\item $\|f(\cdot,\cdot,\cdot,\cdot,0)\|_\infty<\infty$,
\item $\left\|\frac{\dx}{\dx\tilde{x}}f\right\|_{\infty}<\infty$.
\end{itemize}
In addition, we make the following structural assumptions for $\overline{\sigma}$ and $\xi$:
\begin{itemize}
\item $\overline{\sigma}$ has the form $\overline{\sigma}=\VV{\overline{\sigma}^{(1)}}{\overline{\sigma}^{(2)}}$, with a $d_1$ - dimensional $\overline{\sigma}^{(1)}$ and $d_2$ - dimensional $\overline{\sigma}^{(2)}$, such that $\overline{\sigma}^{(1)}$ is a function of $t,\tilde{x},\overline{x},y,\pi_1(z)$ and $\overline\sigma^{(2)}$ is a function of $t,\tilde{x},\pi_2(z)$,
\item $\overline{\sigma}$ is differentiable in $(\tilde{x},\overline{x},y,z)$ everywhere with bounded derivatives,
\item $L_{\xi,\overline{x}}<1\leq L_{\overline{\sigma},z}^{-1}$,
\item $\|\xi\|_\infty<\infty$.
\end{itemize}
Under these conditions we can prove the following abstract result, which will be applied to the particular FBSDE later on. It basically states that for $I^{M}_{\mathrm{max}}=[0,T]$ to hold it is enough to control the Lipschitz constant of $u$ w.r.t. $\overline{x}\in\mathbb{R}$: The Lipschitz constant w.r.t. $\tilde{x}\in\mathbb{R}^{1\times N}$ is then controlled automatically as well.

\begin{thm}\label{sc3} Assume that the above problem has the following property: Every weakly regular Markovian decoupling field $u:[t,T]\times\mathbb{R}^{1\times N}\times\mathbb{R}\rightarrow\mathbb{R}$ satisfies
$$\sup_{s\in[t,T]}\left\|\frac{\dx}{\dx \overline{x}}u(s,\cdot,\cdot)\right\|_\infty\leq K$$
for all $t$, $u$, $\varepsilon\in(0,\varepsilon_0]$, where $K<L_{\overline{\sigma},z}^{-1}$ and $\varepsilon_0>0$.\\
Then there exists an $\varepsilon>0$ such that for the above problem $I^{M}_{\mathrm{max}}=[0,T]$ holds true.
\end{thm}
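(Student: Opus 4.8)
The plan is to argue by contradiction: suppose $I^M_{\mathrm{max}}=(t^M_{\mathrm{min}},T]$ for every $\varepsilon\in(0,\varepsilon_0]$. By Theorem~\ref{GLObalexistM} there is, for each such $\varepsilon$, a unique weakly regular Markovian decoupling field $u=u^\varepsilon$ on $(t^M_{\mathrm{min}},T]$, and by Lemma~\ref{EXPlosionM} its Lipschitz constant must blow up at the left endpoint:
$\lim_{t\downarrow t^M_{\mathrm{min}}}L_{u(t,\cdot),x}=L_{\overline\sigma,z}^{-1}$, where here $x=(\tilde x,\overline x)$ ranges over $\mathbb{R}^{1\times N}\times\mathbb{R}$ and $L_{u(t,\cdot),x}$ is taken w.r.t.\ the Euclidean norm. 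The hypothesis of the theorem controls only $\|\tfrac{\dx}{\dx\overline x}u(s,\cdot,\cdot)\|_\infty\le K<L_{\overline\sigma,z}^{-1}$, uniformly in $t,\varepsilon\in(0,\varepsilon_0]$; so to reach a contradiction I must show that this forces a uniform bound on the \emph{full} Lipschitz constant $L_{u(t,\cdot),x}$ that stays strictly below $L_{\overline\sigma,z}^{-1}$, contradicting the explosion. Thus the whole content is: \emph{bounding $\tfrac{\dx}{\dx\tilde x}u$ in terms of $K$ and the structural data, uniformly in $t$ and in $\varepsilon\le\varepsilon_0$.}

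To get that bound I would work on a compact subinterval $[t_1,t_2]\subseteq(t^M_{\mathrm{min}},T]$ on which $u$ is strongly regular (Theorem~\ref{UNIqMREGulM}) and controlled in $z$ (Theorem~\ref{CONtrollM}), so $|Z_s|\le C$ for the processes attached to any constant initial value $x=(\tilde x,\overline x)$. Fix a direction $v$ in the $\tilde x$-variables and differentiate the forward-backward system \eqref{epssystem} w.r.t.\ $x$ in direction $v$; write $(\nabla X,\nabla\overline X,\nabla Y,\nabla Z)$ for the derivative processes, which exist and satisfy the integrability \eqref{STRongregul2} by strong regularity. The key observations are structural: (i) the first $N$ forward components $\tilde X$ do not depend on $(\overline X,Y,Z)$, and $\tilde\mu,\tilde\sigma$ are globally Lipschitz, so $\nabla\tilde X$ is controlled by $L_{\tilde\mu,\tilde x}$, $L_{\tilde\sigma,\tilde x}$ and the time horizon, uniformly — and, crucially, after the $\varepsilon$-rescaling the relevant Lipschitz constant of $\tfrac1\varepsilon\tilde\mu(\cdot,\varepsilon\cdot)$, $\tfrac1\varepsilon\tilde\sigma(\cdot,\varepsilon\cdot)$ in the variable $\tilde X$ is still $L_{\tilde\mu,\tilde x}$, $L_{\tilde\sigma,\tilde x}$, i.e.\ $\varepsilon$-independent; (ii) $\nabla Y_s = u_{\overline x}(s,\cdot)\,\nabla\overline X_s + (\nabla_{\tilde x}u)(s,\cdot)\cdot\nabla\tilde X_s$ along the solution, and $|u_{\overline x}|\le K$; (iii) the linear BSDE for $\nabla Y$ has coefficients coming from $\tfrac{\dx}{\dx\tilde x}f$ (bounded by assumption), $\tfrac{\dx}{\dx(\overline x+y)}f$ (bounded by $C(1+|Z|^2)\le C(1+C^2)$ on the controlled-in-$z$ interval), and $\tfrac{\dx}{\dx z}f$ (bounded by $C(1+|Z|)\le C(1+C)$), so they are genuinely bounded once $Z$ is controlled. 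Similarly $\nabla\overline X$ solves a linear SDE whose volatility involves $\overline\sigma_{\overline x},\overline\sigma_y,\overline\sigma_z$ (all bounded) and the source term $\overline\mu_{\tilde x}\,\nabla\tilde X$.

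Putting these together, one derives a closed Gronwall-type estimate: the contribution of the "bad" coupling through $\overline\sigma_z\,\nabla Z$ (the only mechanism that could push the effective Lipschitz constant up to $L_{\overline\sigma,z}^{-1}$) is absorbed by the relation $|\nabla Y|\le K|\nabla\overline X| + (\text{bounded})\cdot|\nabla\tilde X|$ together with $K<L_{\overline\sigma,z}^{-1}$ — exactly the standard mechanism by which a contraction estimate for decoupling fields closes when $L_{u,\overline x}L_{\overline\sigma,z}<1$; here the "$+(\text{bounded})\cdot|\nabla\tilde X|$" term is an honest, $\varepsilon$- and $t$-uniform perturbation because $\nabla\tilde X$ is bounded uniformly. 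This yields $\esssup_x\sup_v\sup_s\mathbb{E}_{t_1,\infty}[|\nabla\overline X_s|^2_v,|\nabla Y_s|^2_v]\le \Lambda$ with $\Lambda$ depending only on $K$, $C$, the Lipschitz data of $\tilde\mu,\tilde\sigma,\overline\mu,\overline\sigma,f$, $\|\tfrac{\dx}{\dx\tilde x}f\|_\infty$, and $T$ — but not on $t_1$ nor on $\varepsilon\in(0,\varepsilon_0]$. Reading off $u_{\tilde x}$ from $\nabla Y$ at $s=t_1$ (dividing out the initial value), one gets $\|u_{\tilde x}(t,\cdot,\cdot)\|_\infty\le \sqrt\Lambda$, hence a bound on the full $L_{u(t,\cdot),x}$ of the form $\max(K,\sqrt\Lambda)$ — or, more carefully, $L_{u(t,\cdot),x}\le K + c\sqrt\Lambda$ in the Euclidean norm combining the two directions — which is \emph{finite and independent of $t$}; choosing $\varepsilon\le\varepsilon_0$ small enough (as in the discussion following \eqref{epssystem}, to keep $L_{\xi^\varepsilon}<L_{\overline\sigma,z}$, and more generally to keep whatever quantity appears strictly below $L_{\overline\sigma,z}^{-1}$) we contradict $\lim_{t\downarrow t^M_{\mathrm{min}}}L_{u(t,\cdot),x}=L_{\overline\sigma,z}^{-1}$. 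Hence $I^M_{\mathrm{max}}=[0,T]$ for that $\varepsilon$.

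The main obstacle is step two, making the Gronwall estimate for $\nabla\tilde X$-coupled system \emph{genuinely uniform in $\varepsilon$ and $t$} while the driver is only locally Lipschitz in $z$: one must first invoke controlled-in-$z$ to replace $|Z|$ by the constant $C$ everywhere, then check that the rescaling in \eqref{epssystem} does not inflate any of the Lipschitz constants that enter $\Lambda$ (it does not, because the $\tilde X$-block is autonomous and the $\varepsilon$ sits inside $\tilde\mu(\cdot,\varepsilon\cdot)$ with the compensating $\tfrac1\varepsilon$), and finally verify that the coefficient bound $C(1+|Z|^2)$ on $\tfrac{\dx}{\dx(\overline x+y)}f$ — the worst term — is harmless precisely because $Z$ is already controlled by a constant before this estimate is run. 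The positivity assumption $\tfrac{\dx}{\dx(\overline x+y)}f\ge 0$ should be what guarantees the right sign in the linear BSDE comparison so that the $\nabla\overline X$–$\nabla Y$ feedback does not destabilize; I would use it there.
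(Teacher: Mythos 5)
Your overall skeleton --- contradiction via Lemma \ref{EXPlosionM}, differentiating the forward--backward system in the $\tilde x$- and $\overline x$-directions, using the hypothesis to dispose of $u_{\overline x}$, and reducing everything to a uniform control of $u_{\tilde x}$ --- is exactly the paper's strategy. However, two steps in your execution fail, and they are precisely the two places where the paper's proof has to work hardest.

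First, your Gronwall estimate is not uniform in $t_1$. You propose to replace $|Z|$ by the constant $C$ from the controlled-in-$z$ property and then treat $\delta^{f,z}$ and $\delta^{f,\overline{x}+y}$ as bounded coefficients. But on the open interval $(t^{M}_{\mathrm{min}},T]$ that constant is only guaranteed on each compact subinterval and may blow up as $t_1\downarrow t^{M}_{\mathrm{min}}$; a sup-norm Gronwall bound therefore produces a $\Lambda=\Lambda(t_1)$ that cannot be passed to the limit. The paper circumvents this by never using the sup-norms of $Z$ or of the control $\hat Z$ of the quadratic BSDE \eqref{vhat} for $\hat V$: it shows instead that $Z$, $\delta^{f,z}$ and $\hat Z$ are BMO processes whose $BMO(\mathbb{P})$-norms are controlled independently of $t_1$ and $\varepsilon$ (via the uniform bounds on $Y$ and $|\hat V|\leq K$ together with Theorem A.1.11 in \cite{Fromm2015}), and then estimates the linear BSDE for $\tilde R_s=\frac{\dx}{\dx\tilde x}u(s,\tilde X_s,\overline X_s)$ with the BMO-coefficient a priori estimate of Lemma A.1.7 in \cite{Fromm2015} rather than with Gronwall's inequality.

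Second, and more fundamentally, a bound on $\|u_{\tilde x}\|_\infty$ that is merely finite and uniform does not yield the contradiction. Lemma \ref{EXPlosionM} asserts $\lim_{t\downarrow t^{M}_{\mathrm{min}}}L_{u(t,\cdot),x}=L_{\overline\sigma,z}^{-1}$, and here $L_{\overline\sigma,z}^{-1}$ is finite (equal to $1$ in the application), so your bound $K+c\sqrt{\Lambda}$ with $\varepsilon$-independent $\Lambda$ may well exceed it; your closing appeal to ``choosing $\varepsilon$ small enough'' is inconsistent with your own claim that $\Lambda$ does not depend on $\varepsilon$. What actually closes the argument is that every coupling of $\tilde X$ into $(\overline X,Y,Z)$ passes through $\varepsilon\tilde X$, so the differentiated system carries explicit factors $\varepsilon\delta^{\overline\mu,\tilde x}$, $\varepsilon\delta^{\overline\sigma,\tilde x}$, $\varepsilon\delta^{f,\tilde x}$, and the terminal condition is $\tilde R_T=\varepsilon\frac{\dx}{\dx\tilde x}\xi(\varepsilon\tilde X_T,\overline X_T)$. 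The resulting estimate is $\|\tilde R\|_\infty\leq C\varepsilon L_{\xi,\tilde x}+C\varepsilon\|\alpha\|_{BMO(\mathbb{P})}$, i.e.\ $\|u_{\tilde x}\|_\infty=O(\varepsilon)$, whence $L_{u(t_1,\cdot),(\tilde x,\overline x)^\top}\leq K+\varepsilon\tilde C<L_{\overline\sigma,z}^{-1}$ for small $\varepsilon$. You correctly noted that the $\frac{\dx}{\dx\tilde x}\tilde X$-block is $\varepsilon$-independent, but missed that its feedback into the rest of the system is of order $\varepsilon$; without that observation the proof does not close.
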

\begin{proof}
Assume $I^{M}_{\mathrm{max}}=(t^{M}_{\mathrm{min}},T]$ with some $t^{M}_{\mathrm{min}}\in [0,T)$. Let from now on $u$ be the weakly regular Markovian decoupling field from Theorem \ref{GLObalexistM} defined on the whole of $I^{M}_{\mathrm{max}}$. 
Note also that $u$ is strongly regular.

Choose any $t_1\in(t^M_{\mathrm{min}},T]$, any $\tilde{x}\in\mathbb{R}^N,\overline{x}\in\mathbb{R}$ and consider the corresponding FBSDE on $[t_1,T]$:
\begin{itemize}
\item $\tilde{X}_s=\tilde{x}+\int_{t_1}^s\frac{1}{\varepsilon}\tilde{\mu}(r,\varepsilon\tilde{X}_r)\dx r+\int_{t_1}^s\dx W^\top_r\frac{1}{\varepsilon}\tilde{\sigma}(r,\varepsilon\tilde{X}_r)$,
\item $\overline{X}_s=\overline{x}+\int_{t_1}^s\overline{\mu}(r,\varepsilon\tilde{X}_r)\dx r+\int_{t_1}^s\dx W^\top_r\overline{\sigma}(r,\varepsilon\tilde{X}_r,\overline{X}_r,Y_r,Z_r)$,
\item $Y_s=\xi(\varepsilon\tilde{X}_T,\overline{X}_T)-\int_s^T f(r,\varepsilon\tilde{X}_r,\overline{X}_r,Y_r,Z_r)\dx r-\int_s^T\dx W^\top_r Z_r$
\end{itemize}
where $s\in[t_1,T]$.
We now differentiate w.r.t. $\tilde{x}$ and $\overline{x}$ using strong regularity and the chain rule of Lemma A.3.1. in \cite{Fromm2015}:
\begin{equation}\label{tildetildex}
\frac{\dx}{\dx \tilde{x}}\tilde{X}_s=I_N+\int_{t_1}^s\delta^{\tilde{\mu},\tilde{x}}_r\frac{\dx}{\dx \tilde{x}}\tilde{X}_r\dx r+\int_{t_1}^s\dx W^\top_r\delta^{\tilde{\sigma},\tilde{x}}_r\frac{\dx}{\dx \tilde{x}}\tilde{X}_r,
\end{equation}
\begin{equation}\label{tildeoverx}
\frac{\dx}{\dx \tilde{x}}\overline{X}_s=\int_{t_1}^s\varepsilon\delta^{\overline{\mu},\tilde{x}}_r\frac{\dx}{\dx \tilde{x}}\tilde{X}_r\dx r
+\int_{t_1}^s\dx W^\top_r\left(\varepsilon\delta^{\overline{\sigma},\tilde{x}}_r\frac{\dx}{\dx \tilde{x}}\tilde{X}_r+\delta^{\overline{\sigma},\overline{x}}_r\frac{\dx}{\dx \tilde{x}}\overline{X}_r+
\delta^{\overline{\sigma},y}_r\frac{\dx}{\dx \tilde{x}}Y_r+\delta^{\overline{\sigma},z}_r\frac{\dx}{\dx \tilde{x}}Z_r\right),
\end{equation}
\begin{multline}\label{tildey}
\frac{\dx}{\dx \tilde{x}}Y_s=\frac{\dx}{\dx \tilde{x}}Y_{T}-\int_{s}^{T}\dx W^\top_r\frac{\dx}{\dx \tilde{x}}Z_r- \\
-\int_{s}^{T}\left(\varepsilon\delta^{f,\tilde{x}}_r\frac{\dx}{\dx \tilde{x}}\tilde{X}_r+\delta^{f,\overline{x}+y}_r\left(\frac{\dx}{\dx \tilde{x}}\overline{X}_r+\frac{\dx}{\dx \tilde{x}}Y_r\right)+\delta^{f,z}_r\frac{\dx}{\dx \tilde{x}}Z_r\right)\dx r,
\end{multline}
\begin{equation}\label{overoverx}
\frac{\dx}{\dx \overline{x}}\overline{X}_s=1+\int_{t_1}^s\dx W^\top_r\left(\delta^{\overline{\sigma},\overline{x}}_r\frac{\dx}{\dx \overline{x}}\overline{X}_r+
\delta^{\overline{\sigma},y}_r\frac{\dx}{\dx \overline{x}}Y_r+\delta^{\overline{\sigma},z}_r\frac{\dx}{\dx \overline{x}}Z_r\right),
\end{equation}
\begin{equation}\label{overy}
\frac{\dx}{\dx \overline{x}}Y_s=\frac{\dx}{\dx \overline{x}}Y_{T}-\int_{s}^{T}\dx W^\top_r\frac{\dx}{\dx \overline{x}}Z_r-
\int_{s}^{T}\left(\delta^{f,\overline{x}+y}_r\left(\frac{\dx}{\dx \overline{x}}\overline{X}_r+\frac{\dx}{\dx \overline{x}}Y_r\right)+\delta^{f,z}_r\frac{\dx}{\dx \overline{x}}Z_r\right)\dx r,
\end{equation}
a.s. for all $s\in[t_1,T]$, for almost all $(\tilde{x},\overline{x})\in\mathbb{R}^{1\times N}\times\mathbb{R}$ with the following progressively measurable and bounded processes:
\begin{itemize}
\item $\delta^{\tilde{\mu},\tilde{x}}$, $\delta^{\tilde{\sigma},\tilde{x}}$, $\delta^{\overline{\mu},\tilde{x}}$ which are provided by Lemma A.3.1. in \cite{Fromm2015} and are bounded independently of $t_1$ and $\varepsilon$,
\item $\delta^{\overline{\sigma},\tilde{x}}=\frac{\dx}{\dx\tilde{x}}\overline{\sigma}(\cdot,\tilde{X},\overline{X},Y,Z)$, $\delta^{f,\tilde{x}}=\frac{\dx}{\dx\tilde{x}}f(\cdot,\tilde{X},\overline{X},Y,Z)$, which are also uniformly bounded,
\item $\delta^{\overline{\sigma},\overline{x}}=\frac{\dx}{\dx\overline{x}}\overline{\sigma}(\cdot,\tilde{X},\overline{X},Y,Z)$, $\delta^{\overline{\sigma},y}=\frac{\dx}{\dx y}\overline{\sigma}(\cdot,\tilde{X},\overline{X},Y,Z)$ and $\delta^{\overline{\sigma},z}=\frac{\dx}{\dx z}\overline{\sigma}(\cdot,\tilde{X},\overline{X},Y,Z)$, which are uniformly bounded as well,
\item $\delta^{f,\overline{x}+y}=\frac{\dx}{\dx (\overline{x}+y)}f(\cdot,\tilde{X},\overline{X},Y,Z)$ and $\delta^{f,z}=\frac{\dx}{\dx z}f(\cdot,\tilde{X},\overline{X},Y,Z)$, which are bounded but not necessarily uniformly in $t_1$, $\varepsilon$.
\end{itemize}
Uniform boundedness of $\delta^{\tilde{\mu},\tilde{x}}$, $\delta^{\tilde{\sigma},\tilde{x}}$, $\delta^{\overline{\mu},\tilde{x}}$, $\delta^{\overline{\sigma},\tilde{x}}$, $\delta^{f,\tilde{x}}$, $\delta^{\overline{\sigma},\overline{x}}$, $\delta^{\overline{\sigma},y}$, $\delta^{\overline{\sigma},z}$ is a consequence of the Lipschitz continuity assumptions we have made. Boundedness of $\delta^{f,\overline{x}+y}$ and $\delta^{f,z}$, however, follows from the structural assumptions on $f$ together with the boundedness of $Z$. \\
Note also that:
\begin{itemize}
\item $\delta^{\tilde{\sigma},\tilde{x}}$ is $\mathbb{R}^{\left(d\times N\right)\times (1\times N)}$ - valued and can also be interpreted as a vector $\left(\delta^{\tilde{\sigma},\tilde{x},i}\right)_{i=1,\ldots,d}$, where $\delta^{\tilde{\sigma},\tilde{x},i}$ are $\mathbb{R}^{(1\times N)\times (1\times N)}$ - valued.
\end{itemize}
According to the structural assumptions for $f$ we have:
\begin{itemize}
\item $\delta^{f,\overline{x}+y}$ is real-valued and non-negative,
\item $\delta^{f,z}$ is an $\mathbb{R}^{1\times d}$ - valued vector, where the first $d_1$ components vanish.
\end{itemize}
According to the structural assumptions for $\overline\sigma$ we have:
\begin{itemize}
\item $\delta^{\overline{\sigma},z}$ is an $\mathbb{R}^{d\times d}$ - valued bounded matrix consisting of an upper left $d_1\times d_1$ block and a lower right $d_2\times d_2$ block, such that all remaining components vanish and the operator norm of the matrix itself is bounded by $L_{\overline{\sigma},z}\leq 1$,
\item $\delta^{\overline{\sigma},\overline{x}}$, $\delta^{\overline{\sigma},y}$ are $\mathbb{R}^{d}$ - valued vectors, for which the last $d_2$ components vanish.
\end{itemize}
Now, define
\begin{itemize}
\item $\check{U}_r:=\frac{\dx}{\dx \overline{x}}\overline{X}_r$, $V_r:=\frac{\dx}{\dx \overline{x}}Y_r$, $\tilde{Z}_r:=\frac{\dx}{\dx \overline{x}}Z_r$,
\item $\hat{V}_r:=\frac{\dx}{\dx\overline{x}}u(r,\tilde{X}_r,\overline{X}_r)$.
\end{itemize}
$\hat{V}$ is bounded by $K<L_{\overline\sigma,z}^{-1}$: We assume without loss of generality that $\left|\frac{\dx}{\dx x}u\right|\leq K$ everywhere.

Define a process $\hat{V}$ via $\hat{V}_r:=\frac{\dx}{\dx x}u(r,X_r)$, $r\in[t_1,T]$. We can assume without loss of generality that $\hat{V}$ is bounded, since $u$ is Lipschitz continuous in $x$ on every compact interval and we can assume that $\frac{\dx}{\dx x}u(t,\cdot)$ is uniformly bounded by $L_{u(t,\cdot),x}$ for every $t\in I_{\mathrm{max}}$.\\
Let $\tau\in [t_1,T]$ be any stopping time such that $U$ is positive on $[t_1,\tau]$. We will argue later that we can choose $\tau=T$. \\
For $s\in [t_1,\tau]$ we have, using the chain rule of Lemma A.3.1. in \cite{Fromm2015}, the relationship $V_s=\hat{V}_s\check{U}_s$. In particular, $\hat{V}_s\mathbf{1}_{[t_1,\tau]}=\frac{V_s}{\check{U}_s}\mathbf{1}_{[t_1,\tau]}$ and $\hat{V}$ has a modification, which is continuous on $[t_1,\tau]$.
Defining $\hat{U}:=\check{U}^{-1}=\frac{1}{\check{U}}$ on $[t_1,\tau]$, we can rewrite (\ref{overoverx}) as
$$\check{U}_{s\wedge\tau}=1+\int_{t_1}^{s\wedge\tau}\dx W_r^\top\left(\delta^{\overline{\sigma},\overline{x}}_r+\delta^{\overline{\sigma},y}_r\hat{V}_r+\delta^{\overline{\sigma},z}_r\tilde{Z}_r\hat{U}_r\right)\check{U}_r.$$
Applying the It\^o formula to $\hat{U}=\check{U}^{-1}=\frac{1}{\check{U}}$ we obtain
\begin{multline*}
\hat{U}_{s\wedge\tau}=1+\int_{t_1}^{s\wedge\tau}\hat{U}_r\bigg(\left(\delta^{\overline{\sigma},\overline{x}}_r+\delta^{\overline{\sigma},y}_r\hat{V}_r+\delta^{\overline{\sigma},z}_r\tilde{Z}_r\hat{U}_r\right)^\top\left(\delta^{\overline{\sigma},\overline{x}}_r+\delta^{\overline{\sigma},y}_r\hat{V}_r+\delta^{\overline{\sigma},z}_r\tilde{Z}_r\hat{U}_r\right)\bigg) \dx r- \\
-\int_{t_1}^{s\wedge\tau}\dx W_r^\top\hat{U}_r\left(\delta^{\overline{\sigma},\overline{x}}_r+\delta^{\overline{\sigma},y}_r\hat{V}_r+\delta^{\overline{\sigma},z}_r\tilde{Z}_r\hat{U}_r\right).
\end{multline*}
Applying the It\^o formula to $\hat{V}=V\hat{U}$ we get
\begin{multline*}
\hat{V}_{s\wedge\tau}=\hat{V}_{\tau}-\int_{s\wedge\tau}^{\tau}\dx W_r^\top\left(\tilde{Z}_r\hat{U}_r-\hat{V}_r\left(\delta^{\overline{\sigma},\overline{x}}_r+\delta^{\overline{\sigma},y}_r\hat{V}_r+\delta^{\overline{\sigma},z}_r\tilde{Z}_r\hat{U}_r\right)\right)-\\
-\int_{s\wedge\tau}^{\tau}\bigg(\delta^{f,\overline{x}+y}_r+\delta^{f,\overline{x}+y}_r\hat{V}_r+\delta^{f,z}_r\tilde{Z}_r\hat{U}_r+\\
+\hat{V}_r\left(\left(\mybox{\delta^{\overline{\sigma},\overline{x}}_r+\delta^{\overline{\sigma},y}_r\hat{V}_r+\delta^{\overline{\sigma},z}_r\tilde{Z}_r\hat{U}_r}\right)^\top\left(\delta^{\overline{\sigma},\overline{x}}_r+\delta^{\overline{\sigma},y}_r\hat{V}_r+\delta^{\overline{\sigma},z}_r\tilde{Z}_r\hat{U}_r\right)\right)- \\
-\mybox{\tilde{Z}_r^\top\hat{U}_r}\left(\delta^{\overline{\sigma},\overline{x}}_r+\delta^{\overline{\sigma},y}_r\hat{V}_r+\delta^{\overline{\sigma},z}_r\tilde{Z}_r\hat{U}_r\right)\bigg)\dx r.
\end{multline*}
Note the two marked terms above. Define a process $\hat{Z}$ on $[t_1,\tau]$ via
$$\hat{Z}_r:=\hat{U}_r\tilde{Z}_r-\hat{V}_r\left(\delta^{\overline{\sigma},\overline{x}}_r+\delta^{\overline{\sigma},y}_r\hat{V}_r+\delta^{\overline{\sigma},z}_r\tilde{Z}_r\hat{U}_r\right),\quad r\in [t_1,\tau].$$
Then we get
\begin{multline}\label{BS1}
\hat{V}_{s\wedge\tau}=\hat{V}_{\tau}-\int_{s\wedge\tau}^{\tau}\dx W_r^\top\hat{Z}_r-
\int_{s\wedge\tau}^{\tau}\bigg(\delta^{f,\overline{x}+y}_r+\delta^{f,\overline{x}+y}_r\hat{V}_r+\mybox{\delta^{f,z}_r\tilde{Z}_r\hat{U}_r}- \\
-\hat{Z}_r^\top\left(\delta^{\overline{\sigma},\overline{x}}_r+\delta^{\overline{\sigma},y}_r\hat{V}_r+\mybox{\delta^{\overline{\sigma},z}_r\tilde{Z}_r\hat{U}_r}\right)\bigg)\dx r.
\end{multline}
Note the terms $\delta^{\overline{\sigma},z}_r\tilde{Z}_r\hat{U}_r$ and $\delta^{f,z}_r\tilde{Z}_r\hat{U}_r$.
A straightforward calculation starting from the definition of $\hat{Z}_r$ leads to
$$\hat{Z}_r+\hat{V}_r\left(\delta^{\overline{\sigma},\overline{x}}_r+\delta^{\overline{\sigma},y}_r\hat{V}_r\right)=\tilde{Z}_r\hat{U}_r-\hat{V}_r\delta^{\overline{\sigma},z}_r\tilde{Z}_r\hat{U}_r=\left(I_d-\hat{V}_r\delta^{\overline{\sigma},z}_r\right)\tilde{Z}_r\hat{U}_r,$$
\begin{equation}\label{BS2}\tilde{Z}_r\hat{U}_r=
\left(I_{d}-\hat{V}_r\delta^{\overline{\sigma},z}_r\right)^{-1}\left(\hat{Z}_r+\hat{V}_r\left(\delta^{\overline{\sigma},\overline{x}}_r+\delta^{\overline{\sigma},y}_r\hat{V}_r\right)\right),\quad r\in [t_1,\tau],
\end{equation}
where $I_d\in\mathbb{R}^{d\times d}$ is the identity. \\
Note here that $\left\|\hat{V}\right\|_\infty\leq L_{u_{|[t_1,T]},x}<L_{\overline\sigma,z}^{-1}$ and also that the operator norm of $\delta^{\overline\sigma,z}$ is universally bounded by $L_{\overline\sigma,z}$, so the essential supremum of the operator norm of $\hat{V}_r\delta^{\overline{\sigma},z}_r$ is strictly smaller than $1$ and, therefore, the expression $\left(I_{d}-\hat{V}_r\delta^{\overline{\sigma},z}_r\right)^{-1}$ is well-defined and even universally bounded (on $[t_1,T]$) in the operator norm.  \\
By plugging (\ref{BS2}) into (\ref{BS1}) we obtain:
\begin{multline*}
\hat{V}_{s\wedge\tau}=\hat{V}_{\tau}-\int_{s\wedge\tau}^{\tau}\dx W_r^\top\hat{Z}_r- \\
-\int_{s\wedge\tau}^{\tau}\bigg(\delta^{f,\overline{x}+y}_r+\delta^{f,\overline{x}+y}_r\hat{V}_r+\delta^{f,z}_r\left(I_{d}-\hat{V}_r\delta^{\overline{\sigma},z}_r\right)^{-1}
\left(\hat{Z}_r+\hat{V}_r\left(\mybox{\delta^{\overline{\sigma},\overline{x}}_r}+\mybox{\delta^{\overline{\sigma},y}_r}\hat{V}_r\right)\right)- \\
-\hat{Z}_r^\top\left(\delta^{\overline{\sigma},\overline{x}}_r+\delta^{\overline{\sigma},y}_r\hat{V}_r+\delta^{\overline{\sigma},z}_r\left(I_{d}-\hat{V}_r\delta^{\overline{\sigma},z}_r\right)^{-1}\left(\hat{Z}_r+
\hat{V}_r\left(\delta^{\overline{\sigma},\overline{x}}_r+\delta^{\overline{\sigma},y}_r\hat{V}_r\right)\right)\right)\bigg)\dx r,
\end{multline*}
where the marked terms effectively disappear for the following reason:

Remember that $\delta^{\overline{\sigma},z}_r$, interpreted as a $d\times d$ - matrix, consists of an upper left $d_1\times d_1$ block and a lower right $d_2\times d_2$ block. The remaining components of the matrix always vanish. This implies that the matrix $\left(I_{d}-\hat{V}_r\delta^{\overline{\sigma},z}_r\right)^{-1}$ will also have this structure. \\
Remember also that the first $d_1$ components of the vector $\delta^{f,z}$ vanish, which means that $$\delta^{f,z}_r\left(I_{d}-\hat{V}_r\delta^{\overline{\sigma},z}_r\right)^{-1}$$ will also have this property.\\
Finally the last $d_2$ components of $\delta^{\overline\sigma,x}$ and $\delta^{\overline\sigma,y}$ vanish. Therefore, the expressions
$$\delta^{f,z}_r\left(I_{d}-\hat{V}_r\delta^{\overline{\sigma},z}_r\right)^{-1}\cdot\mybox{\delta^{\overline{\sigma},\overline{x}}_r}\quad\textrm{ and }\quad
\delta^{f,z}_r\left(I_{d}-\hat{V}_r\delta^{\overline{\sigma},z}_r\right)^{-1}\cdot\mybox{\delta^{\overline{\sigma},y}_r}$$ vanish. $\hfill\myqed$

We finally obtain
\begin{multline}\label{vhatOld}
\hat{V}_{s\wedge\tau}=\hat{V}_{\tau}-\int_{s\wedge\tau}^{\tau}\dx W_r^\top\hat{Z}_r-
\int_{s\wedge\tau}^{\tau}\bigg(\delta^{f,\overline{x}+y}_r+\delta^{f,\overline{x}+y}_r\hat{V}_r+\delta^{f,z}_r\left(I_{d}-\hat{V}_r\delta^{\overline{\sigma},z}_r\right)^{-1}\hat{Z}_r-\\
-\hat{Z}_r^\top\left(\delta^{\overline{\sigma},\overline{x}}_r+\delta^{\overline{\sigma},y}_r\hat{V}_r+\delta^{\overline{\sigma},z}_r\left(I_{d}-\hat{V}_r\delta^{\overline{\sigma},z}_r\right)^{-1}\left(\hat{Z}_r+
\hat{V}_r\left(\delta^{\overline{\sigma},\overline{x}}_r+\delta^{\overline{\sigma},y}_r\hat{V}_r\right)\right)\right)\bigg)\dx r
\end{multline}
a.s. for all $s\in[t_1,T]$, for almost all $x\in\mathbb{R}$. Note that this BSDE is quadratic in $\hat{Z}$. \\
Now, let us demonstrate that we can actually set $\tau=T$: Let
$$ \tau_0:=\inf\{s\in[t_1,T]\,|\,U_s\leq 0\}\wedge T. $$
Note here that $U$ is a continuous process starting in $1$.
$\hat{Z}$ is well-defined on $[t_1,\tau_0)$. Furthermore, due to (\ref{vhatOld}) and the boundedness of $\hat{V}$ on $[t_1,T]$, $\hat{Z}$ is a BMO-process on $[t_1,\tau]$ for every stopping time $\tau<\tau_0$, with a $BMO(\mathbb{P})$ - norm which can be controlled independently of $\tau$ (Theorem A.1.11. in \cite{Fromm2015}). Because of (\ref{BS2}) the process $\gamma_r:=\delta^{\overline{\sigma},\overline{x}}_r+\delta^{\overline{\sigma},y}_r\hat{V}_r+\delta^{\overline{\sigma},z}_r\tilde{Z}_r\hat{U}_r$ is also a BMO-process with the same property.
This implies $\mathbb{E}\left[\int_{t_1}^{\tau_0}|\gamma_r|^2\dx r\right]<\infty$. Remember
$$\check{U}_{s\wedge\tau}=1+\int_{t_1}^{s\wedge\tau}\dx W_r^\top\gamma_r \check{U}_r,\quad s\in [t_1,T] $$
for all stopping times $\tau\in[t_1,\tau_0)$. This implies
$$ \check{U}_{\tau}=\exp\left(\int_{t_1}^{\tau}\dx W_r^\top\gamma_r-\frac{1}{2}\int_{t_1}^{\tau}|\gamma_r|^2\dx r\right) $$
for all stopping times $\tau\in[t_1,\tau_0]$. Because of continuity of $\check{U}$ this implies
$$ \check{U}_{\tau_0}=\exp\left(\int_{t_1}^{\tau_0}\dx W_r^\top\gamma_r-\frac{1}{2}\int_{t_1}^{\tau_0}|\gamma_r|^2\dx r\right)>0. $$
Now, note that $\{\tau_0<T\}\subseteq\{\check{U}_{\tau_0}=0\}$. However, since $\check{U}_{\tau_0}>0$ a.s., as we have seen, $\tau_0=T$ a.s. must hold. Therefore, $\check{U}_{T}>0$ and even $\check{U}_{s}>0$ for all $s\in[t_1,T]$, so we can indeed set $\tau=T$ in (\ref{vhatOld}). $\hfill\myqed$
\vspace*{1mm}

Due to \eqref{vhatOld} and the positivity of $\check{U}$ we have

\begin{multline}\label{vhat}
\hat{V}_{s}=\hat{V}_{T}-\int_{s}^{T}\dx W^\top_r\hat{Z}_r-
\int_{s}^{T}\bigg(\delta^{f,\overline{x}+y}_r+\delta^{f,\overline{x}+y}_r\hat{V}_r+\delta^{f,z}_r\left(I_{d}-\hat{V}_r\delta^{\overline{\sigma},z}_r\right)^{-1}\hat{Z}_r- \\
-\hat{Z}_r^\top\left(\delta^{\overline{\sigma},\overline{x}}_r+\hat{V}_r\delta^{\overline{\sigma},y}_r+\delta^{\overline{\sigma},z}_r\left(I_{d}-\hat{V}_r\delta^{\overline{\sigma},z}_r\right)^{-1}\left(\hat{Z}_r+
\hat{V}_r\left(\delta^{\overline{\sigma},\overline{x}}_r+\delta^{\overline{\sigma},y}_r\hat{V}_r\right)\right)\right)\bigg)\dx r,
\end{multline}


Due to the hypotheses of the theorem, $\hat{V}_r$ and $\left(I_{d}-\hat{V}_r\delta^{\overline{\sigma},z}_r\right)^{-1}$ are uniformly bounded by constants, which do not depend on $t_1$! \\
Furthermore, according to the It\^o formula and the dynamics of $\frac{\dx}{\dx \tilde{x}}\tilde{X}$ given by (\ref{tildetildex}) its (matrix) inverse $\tilde{U}_r:=\left(\frac{\dx}{\dx \tilde{x}}\tilde{X}_r\right)^{-1}$ exists and has dynamics
\begin{equation}\label{invdyn}
\tilde{U}_s=I_N+\int_{t_1}^s\tilde{U}_r\left(\sum_{i=1}^d\delta^{\tilde{\sigma},\tilde{x},i}_r\delta^{\tilde{\sigma},\tilde{x},i}_r-
\delta^{\tilde{\mu},\tilde{x}}_r\right)\dx r-\sum_{i=1}^d\int_{t_1}^s\dx W^i_r\tilde{U}_r\delta^{\tilde{\sigma},\tilde{x},i}_r,
\end{equation}
a.s. for all $s\in[t_1,T]$. In particular, we can assume that $\tilde{U}$ is continuous in time.

Using the chain rule of Lemma A.3.1. in \cite{Fromm2015} we have
\begin{multline*} \frac{\dx}{\dx \tilde{x}}Y_s=\frac{\dx}{\dx \tilde{x}}\left(u(s,\tilde{X}_s,\overline{X}_s)\right)=\frac{\dx}{\dx \tilde{x}}u(s,\tilde{X}_s,\overline{X}_s)\frac{\dx}{\dx \tilde{x}}\tilde{X}_s+
\frac{\dx}{\dx \overline{x}}u(s,\tilde{X}_s,\overline{X}_s)\frac{\dx}{\dx \tilde{x}}\overline{X}_s=\\
=\frac{\dx}{\dx \tilde{x}}u(s,\tilde{X}_s,\overline{X}_s)\frac{\dx}{\dx \tilde{x}}\tilde{X}_s+\hat{V}_s\frac{\dx}{\dx \tilde{x}}\overline{X}_s.
\end{multline*}
Now, define
\begin{itemize}
\item $R_s:=\frac{\dx}{\dx \tilde{x}}Y_s-\hat{V}_s\frac{\dx}{\dx \tilde{x}}\overline{X}_s=\frac{\dx}{\dx \tilde{x}}u(s,\tilde{X}_s,\overline{X}_s)\frac{\dx}{\dx \tilde{x}}\tilde{X}_s$ and
\item $\tilde{R}_s:=R_s\tilde{U}_s=\frac{\dx}{\dx \tilde{x}}u(s,\tilde{X}_s,\overline{X}_s)$,
\end{itemize}
which are both $1\times (1\times N)$ - valued.
Using the It\^o formula we can deduce the dynamics of $R$ and then of $\tilde{R}$. Let us first deal with $R$. We rely on (\ref{tildey}), (\ref{tildeoverx}) and (\ref{vhat}):
$$ R_s=R_T-\int_{s}^{T}\dx W^\top_r\check{Z}_r-\int_{s}^{T}H_r\dx r, $$
where
$$ \check{Z}_r:=\frac{\dx}{\dx \tilde{x}}Z_r-\hat{Z}_r\frac{\dx}{\dx \tilde{x}}\overline{X}_r-\hat{V}_r\left(\varepsilon\delta^{\overline{\sigma},\tilde{x}}_r\frac{\dx}{\dx \tilde{x}}\tilde{X}_r+\delta^{\overline{\sigma},\overline{x}}_r\frac{\dx}{\dx \tilde{x}}\overline{X}_r+
\delta^{\overline{\sigma},y}_r\frac{\dx}{\dx \tilde{x}}Y_r+\delta^{\overline{\sigma},z}_r\frac{\dx}{\dx \tilde{x}}Z_r\right) $$
and where $H_r$ is another auxiliary process defined as
\begin{multline*}
 H_r:=\varepsilon\delta^{f,\tilde{x}}_r\frac{\dx}{\dx \tilde{x}}\tilde{X}_r+\delta^{f,\overline{x}+y}_r\left(\mybox{\frac{\dx}{\dx \tilde{x}}\overline{X}_r}+\frac{\dx}{\dx \tilde{x}}Y_r\right)+\delta^{f,z}_r\frac{\dx}{\dx \tilde{x}}Z_r- \\
-\Bigg\{\mybox{\delta^{f,\overline{x}+y}_r}+\delta^{f,\overline{x}+y}_r\hat{V}_r+\delta^{f,z}_r\left(I_{d}-\hat{V}_r\delta^{\overline{\sigma},z}_r\right)^{-1}\hat{Z}_r- \\
-\hat{Z}_r^\top\left(\Mybox{\delta^{\overline{\sigma},\overline{x}}_r}+\hat{V}_r\delta^{\overline{\sigma},y}_r+\delta^{\overline{\sigma},z}_r\left(I_{d}-\hat{V}_r\delta^{\overline{\sigma},z}_r\right)^{-1}\left(\hat{Z}_r+
\hat{V}_r\left(\delta^{\overline{\sigma},\overline{x}}_r+\delta^{\overline{\sigma},y}_r\hat{V}_r\right)\right)\right)\Bigg\}\frac{\dx}{\dx \tilde{x}}\overline{X}_r-\\
-\hat{V}_r\varepsilon\delta^{\overline{\mu},\tilde{x}}_r\frac{\dx}{\dx \tilde{x}}\tilde{X}_r-
\hat{Z}^\top_r\left(\varepsilon\delta^{\overline{\sigma},\tilde{x}}_r\frac{\dx}{\dx \tilde{x}}\tilde{X}_r+\Mybox{\delta^{\overline{\sigma},\overline{x}}_r\frac{\dx}{\dx \tilde{x}}\overline{X}_r}+
\delta^{\overline{\sigma},y}_r\frac{\dx}{\dx \tilde{x}}Y_r+\delta^{\overline{\sigma},z}_r\frac{\dx}{\dx \tilde{x}}Z_r\right).
\end{multline*}
In the above expression the marked terms effectively cancel out and we obtain:
\begin{multline*}
 H_r=\varepsilon\delta^{f,\tilde{x}}_r\frac{\dx}{\dx \tilde{x}}\tilde{X}_r+\mybox{\delta^{f,\overline{x}+y}_r\frac{\dx}{\dx \tilde{x}}Y_r}+\delta^{f,z}_r\frac{\dx}{\dx \tilde{x}}Z_r-
\Bigg\{\mybox{\delta^{f,\overline{x}+y}_r\hat{V}_r}+\delta^{f,z}_r\left(I_{d}-\hat{V}_r\delta^{\overline{\sigma},z}_r\right)^{-1}\hat{Z}_r- \\
-\hat{Z}_r^\top\left(\Mybox{\hat{V}_r\delta^{\overline{\sigma},y}_r}+\delta^{\overline{\sigma},z}_r\left(I_{d}-\hat{V}_r\delta^{\overline{\sigma},z}_r\right)^{-1}\left(\hat{Z}_r+
\hat{V}_r\left(\delta^{\overline{\sigma},\overline{x}}_r+\delta^{\overline{\sigma},y}_r\hat{V}_r\right)\right)\right)\Bigg\}\frac{\dx}{\dx \tilde{x}}\overline{X}_r-\\
-\hat{V}_r\varepsilon\delta^{\overline{\mu},\tilde{x}}_r\frac{\dx}{\dx \tilde{x}}\tilde{X}_r-\hat{Z}^\top_r\left(\varepsilon\delta^{\overline{\sigma},\tilde{x}}_r\frac{\dx}{\dx \tilde{x}}\tilde{X}_r+
\Mybox{\delta^{\overline{\sigma},y}_r\frac{\dx}{\dx \tilde{x}}Y_r}+\delta^{\overline{\sigma},z}_r\frac{\dx}{\dx \tilde{x}}Z_r\right).
\end{multline*}
In the above expression the marked terms can be effectively merged using $\frac{\dx}{\dx \tilde{x}}Y_r-\hat{V}_s\frac{\dx}{\dx \tilde{x}}\overline{X}_r=R_r$, so we can further simplify:
\begin{multline}\label{hwozhat}
 H_r=\varepsilon\delta^{f,\tilde{x}}_r\frac{\dx}{\dx \tilde{x}}\tilde{X}_r+\delta^{f,\overline{x}+y}_rR_r+\delta^{f,z}_r\mybox{\frac{\dx}{\dx \tilde{x}}Z_r}-\delta^{f,z}_r\left(I_{d}-\hat{V}_r\delta^{\overline{\sigma},z}_r\right)^{-1}\hat{Z}_r\frac{\dx}{\dx \tilde{x}}\overline{X}_r+\\
+\hat{Z}_r^\top\bigg(\delta^{\overline{\sigma},z}_r\left(I_{d}-\hat{V}_r\delta^{\overline{\sigma},z}_r\right)^{-1}\left(\hat{Z}_r+
\hat{V}_r\left(\delta^{\overline{\sigma},\overline{x}}_r+\delta^{\overline{\sigma},y}_r\hat{V}_r\right)\right)\bigg)\frac{\dx}{\dx \tilde{x}}\overline{X}_r-\\
-\hat{V}_r\varepsilon\delta^{\overline{\mu},\tilde{x}}_r\frac{\dx}{\dx \tilde{x}}\tilde{X}_r-\hat{Z}^\top_r\left(\varepsilon\delta^{\overline{\sigma},\tilde{x}}_r\frac{\dx}{\dx \tilde{x}}\tilde{X}_r+
\delta^{\overline{\sigma},y}_rR_r+\delta^{\overline{\sigma},z}_r\mybox{\frac{\dx}{\dx \tilde{x}}Z_r}\right).
\end{multline}
Now, using the definition of $\check{Z}$ we can write
$$ \check{Z}_r+\hat{Z}_r\frac{\dx}{\dx \tilde{x}}\overline{X}_r+\hat{V}_r\left(\varepsilon\delta^{\overline{\sigma},\tilde{x}}_r\frac{\dx}{\dx \tilde{x}}\tilde{X}_r+\delta^{\overline{\sigma},\overline{x}}_r\frac{\dx}{\dx \tilde{x}}\overline{X}_r+
\delta^{\overline{\sigma},y}_r\frac{\dx}{\dx \tilde{x}}Y_r\right)=\frac{\dx}{\dx \tilde{x}}Z_r-\hat{V}_r\delta^{\overline{\sigma},z}_r\frac{\dx}{\dx \tilde{x}}Z_r, $$
$$ \frac{\dx}{\dx \tilde{x}}Z_r=\left(I_{d}-\hat{V}_r\delta^{\overline{\sigma},z}_r\right)^{-1}\left(\check{Z}_r+\hat{Z}_r\frac{\dx}{\dx \tilde{x}}\overline{X}_r+\hat{V}_r\left(\varepsilon\delta^{\overline{\sigma},\tilde{x}}_r\frac{\dx}{\dx \tilde{x}}\tilde{X}_r+\delta^{\overline{\sigma},\overline{x}}_r\frac{\dx}{\dx \tilde{x}}\overline{X}_r+
\delta^{\overline{\sigma},y}_r\frac{\dx}{\dx \tilde{x}}Y_r\right)\right)= $$
$$=\left(I_{d}-\hat{V}_r\delta^{\overline{\sigma},z}_r\right)^{-1}\left(\check{Z}_r+\hat{V}_r\varepsilon\delta^{\overline{\sigma},\tilde{x}}_r\frac{\dx}{\dx \tilde{x}}\tilde{X}_r+
\hat{Z}_r\frac{\dx}{\dx \tilde{x}}\overline{X}_r+
\hat{V}_r\left(\delta^{\overline{\sigma},\overline{x}}_r\frac{\dx}{\dx \tilde{x}}\overline{X}_r+
\delta^{\overline{\sigma},y}_r\mybox{\frac{\dx}{\dx \tilde{x}}Y_r}\right)\right).$$
We again use $\frac{\dx}{\dx \tilde{x}}Y_r=R_r+\hat{V}_s\frac{\dx}{\dx \tilde{x}}\overline{X}_r$:
\begin{multline}\label{zcheckplug}
\frac{\dx}{\dx \tilde{x}}Z_r=\left(I_{d}-\hat{V}_r\delta^{\overline{\sigma},z}_r\right)^{-1}\cdot \\
\shoveright{\cdot\left(\check{Z}_r+\hat{V}_r\varepsilon\delta^{\overline{\sigma},\tilde{x}}_r\frac{\dx}{\dx \tilde{x}}\tilde{X}_r+
\hat{Z}_r\frac{\dx}{\dx \tilde{x}}\overline{X}_r+
\hat{V}_r\left(\delta^{\overline{\sigma},\overline{x}}_r\frac{\dx}{\dx \tilde{x}}\overline{X}_r+
\delta^{\overline{\sigma},y}_rR_r+\delta^{\overline{\sigma},y}_r\hat{V}_s\frac{\dx}{\dx \tilde{x}}\overline{X}_r\right)\right)=} \\
\shoveleft{=\left(I_{d}-\hat{V}_r\delta^{\overline{\sigma},z}_r\right)^{-1}\cdot} \\
\shoveright{\cdot\left(\check{Z}_r+\hat{V}_r\varepsilon\delta^{\overline{\sigma},\tilde{x}}_r\frac{\dx}{\dx \tilde{x}}\tilde{X}_r+\hat{V}_r\delta^{\overline{\sigma},y}_rR_r+
\hat{Z}_r\mybox{\frac{\dx}{\dx \tilde{x}}\overline{X}_r}+
\hat{V}_r\left(\delta^{\overline{\sigma},\overline{x}}_r\mybox{\frac{\dx}{\dx \tilde{x}}\overline{X}_r}+\delta^{\overline{\sigma},y}_r\hat{V}_s\mybox{\frac{\dx}{\dx \tilde{x}}\overline{X}_r}\right)\right)=} \\
=\left(I_{d}-\hat{V}_r\delta^{\overline{\sigma},z}_r\right)^{-1}\left(\check{Z}_r+\hat{V}_r\varepsilon\delta^{\overline{\sigma},\tilde{x}}_r\frac{\dx}{\dx \tilde{x}}\tilde{X}_r+\hat{V}_r\mybox{\delta^{\overline{\sigma},y}_r}R_r+
\left(\hat{Z}_r+\hat{V}_r\left(\mybox{\delta^{\overline{\sigma},\overline{x}}_r}+\mybox{\delta^{\overline{\sigma},y}_r}\hat{V}_s\right)\right)\frac{\dx}{\dx \tilde{x}}\overline{X}_r\right),
\end{multline}
where in the last step we used the distributive law. \\
Now, let us plug (\ref{zcheckplug}) into (\ref{hwozhat}) at the first place where $\frac{\dx}{\dx \tilde{x}}Z_r$ appears: Remember that the first $d_1$ components of $\delta^{f,z}$ and the last $d_2$ components of $\delta^{\overline{\sigma},\overline{x}}$ and $\delta^{\overline{\sigma},y}$ vanish. So, the expressions
$$ \delta^{f,z}_r\left(I_{d}-\hat{V}_r\delta^{\overline{\sigma},z}_r\right)^{-1}\cdot\mybox{\delta^{\overline{\sigma},\overline{x}}_r}\quad\textrm{ and } \quad
\delta^{f,z}_r\left(I_{d}-\hat{V}_r\delta^{\overline{\sigma},z}_r\right)^{-1}\cdot\mybox{\delta^{\overline{\sigma},y}_r} $$
vanish and we end up with:
\begin{multline*}
 H_r=\varepsilon\delta^{f,\tilde{x}}_r\frac{\dx}{\dx \tilde{x}}\tilde{X}_r+\delta^{f,\overline{x}+y}_rR_r+\delta^{f,z}_r\left(I_{d}-\hat{V}_r\delta^{\overline{\sigma},z}_r\right)^{-1}\left(\check{Z}_r+\hat{V}_r\varepsilon\delta^{\overline{\sigma},\tilde{x}}_r\frac{\dx}{\dx \tilde{x}}\tilde{X}_r+\mybox{\hat{Z}_r\frac{\dx}{\dx \tilde{x}}\overline{X}_r}\right)-\\
-\delta^{f,z}_r\left(I_{d}-\hat{V}_r\delta^{\overline{\sigma},z}_r\right)^{-1}\mybox{\hat{Z}_r\frac{\dx}{\dx \tilde{x}}\overline{X}_r}+\hat{Z}_r^\top\bigg(\delta^{\overline{\sigma},z}_r\left(I_{d}-\hat{V}_r\delta^{\overline{\sigma},z}_r\right)^{-1}\left(\hat{Z}_r+
\hat{V}_r\left(\delta^{\overline{\sigma},\overline{x}}_r+\delta^{\overline{\sigma},y}_r\hat{V}_r\right)\right)\bigg)\frac{\dx}{\dx \tilde{x}}\overline{X}_r-\\
-\hat{V}_r\varepsilon\delta^{\overline{\mu},\tilde{x}}_r\frac{\dx}{\dx \tilde{x}}\tilde{X}_r-\hat{Z}^\top_r\left(\varepsilon\delta^{\overline{\sigma},\tilde{x}}_r\frac{\dx}{\dx \tilde{x}}\tilde{X}_r+
\delta^{\overline{\sigma},y}_rR_r+\delta^{\overline{\sigma},z}_r\frac{\dx}{\dx \tilde{x}}Z_r\right).
\end{multline*}
Note that the marked terms above effectively cancel out, so we obtain:
\begin{multline}\label{hrwosecz}
 H_r=\varepsilon\delta^{f,\tilde{x}}_r\frac{\dx}{\dx \tilde{x}}\tilde{X}_r+\delta^{f,\overline{x}+y}_rR_r+\delta^{f,z}_r\left(I_{d}-\hat{V}_r\delta^{\overline{\sigma},z}_r\right)^{-1}\left(\check{Z}_r+\hat{V}_r\varepsilon\delta^{\overline{\sigma},\tilde{x}}_r\frac{\dx}{\dx \tilde{x}}\tilde{X}_r\right)+\\
+\mybox{\hat{Z}_r^{\top}\delta^{\overline{\sigma},z}_r\left(I_{d}-\hat{V}_r\delta^{\overline{\sigma},z}_r\right)^{-1}\left(\hat{Z}_r+
\hat{V}_r\left(\delta^{\overline{\sigma},\overline{x}}_r+\delta^{\overline{\sigma},y}_r\hat{V}_r\right)\right)\frac{\dx}{\dx \tilde{x}}\overline{X}_r}-\\
-\hat{V}_r\varepsilon\delta^{\overline{\mu},\tilde{x}}_r\frac{\dx}{\dx \tilde{x}}\tilde{X}_r-\Mybox{\hat{Z}^\top_r}\left(\varepsilon\delta^{\overline{\sigma},\tilde{x}}_r\frac{\dx}{\dx \tilde{x}}\tilde{X}_r+
\delta^{\overline{\sigma},y}_rR_r+\Mybox{\delta^{\overline{\sigma},z}_r\frac{\dx}{\dx \tilde{x}}Z_r}\right).
\end{multline}
But according to (\ref{zcheckplug}) we can write
\begin{multline*}
\Mybox{\hat{Z}_r^{\top}\delta^{\overline{\sigma},z}_r\frac{\dx}{\dx \tilde{x}}Z_r}=\hat{Z}_r^{\top}\delta^{\overline{\sigma},z}_r\left(I_{d}-\hat{V}_r\delta^{\overline{\sigma},z}_r\right)^{-1}\left(\check{Z}_r+\hat{V}_r\varepsilon\delta^{\overline{\sigma},\tilde{x}}_r\frac{\dx}{\dx \tilde{x}}\tilde{X}_r+\hat{V}_r\delta^{\overline{\sigma},y}_rR_r\right)+ \\
+\mybox{\hat{Z}_r^{\top}\delta^{\overline{\sigma},z}_r\left(I_{d}-\hat{V}_r\delta^{\overline{\sigma},z}_r\right)^{-1}\left(\hat{Z}_r+\hat{V}_r\left(\delta^{\overline{\sigma},\overline{x}}_r+\delta^{\overline{\sigma},y}_r\hat{V}_s\right)\right)\frac{\dx}{\dx \tilde{x}}\overline{X}_r},
\end{multline*}
so plugging this into (\ref{hrwosecz}) leads to:
\begin{multline*}
 H_r=\varepsilon\delta^{f,\tilde{x}}_r\frac{\dx}{\dx \tilde{x}}\tilde{X}_r+\delta^{f,\overline{x}+y}_rR_r+\delta^{f,z}_r\left(I_{d}-\hat{V}_r\delta^{\overline{\sigma},z}_r\right)^{-1}\left(\check{Z}_r+\hat{V}_r\varepsilon\delta^{\overline{\sigma},\tilde{x}}_r\frac{\dx}{\dx \tilde{x}}\tilde{X}_r\right)- \\
-\hat{V}_r\varepsilon\delta^{\overline{\mu},\tilde{x}}_r\frac{\dx}{\dx \tilde{x}}\tilde{X}_r-\hat{Z}^\top_r\left(\mybox{\varepsilon\delta^{\overline{\sigma},\tilde{x}}_r\frac{\dx}{\dx \tilde{x}}\tilde{X}_r}+
\mybox{\delta^{\overline{\sigma},y}_rR_r}\right)- \\
-\hat{Z}^\top_r\delta^{\overline{\sigma},z}_r\left(I_{d}-\hat{V}_r\delta^{\overline{\sigma},z}_r\right)^{-1}\left(\check{Z}_r+\hat{V}_r\mybox{\varepsilon\delta^{\overline{\sigma},\tilde{x}}_r\frac{\dx}{\dx \tilde{x}}\tilde{X}_r}+\hat{V}_r\mybox{\delta^{\overline{\sigma},y}_rR_r}\right).
\end{multline*}
Note that we have $I_{d}+\hat{V}_r\delta^{\overline{\sigma},z}_r\left(I_{d}-\hat{V}_r\delta^{\overline{\sigma},z}_r\right)^{-1}=\left(I_{d}-\hat{V}_r\delta^{\overline{\sigma},z}_r\right)^{-1}$, which can be straightforwardly verified by multiplying both sides of this equation with $I_{d}-\hat{V}_r\delta^{\overline{\sigma},z}_r$ from the right. Using this relationship we can combine the marked terms above, obtaining:
\begin{multline*}
 H_r=\varepsilon\delta^{f,\tilde{x}}_r\mybox{\frac{\dx}{\dx \tilde{x}}\tilde{X}_r}+\delta^{f,\overline{x}+y}_r\Mybox{R_r}+\delta^{f,z}_r\left(I_{d}-\hat{V}_r\delta^{\overline{\sigma},z}_r\right)^{-1}\left(\check{Z}_r+\hat{V}_r\varepsilon\delta^{\overline{\sigma},\tilde{x}}_r\mybox{\frac{\dx}{\dx \tilde{x}}\tilde{X}_r}\right)- \\
-\hat{V}_r\varepsilon\delta^{\overline{\mu},\tilde{x}}_r\mybox{\frac{\dx}{\dx \tilde{x}}\tilde{X}_r}-\hat{Z}^\top_r\left(I_{d}-\hat{V}_r\delta^{\overline{\sigma},z}_r\right)^{-1}\left(\varepsilon\delta^{\overline{\sigma},\tilde{x}}_r\mybox{\frac{\dx}{\dx \tilde{x}}\tilde{X}_r}+
\delta^{\overline{\sigma},y}_r\Mybox{R_r}\right)- \\
-\hat{Z}^\top_r\delta^{\overline{\sigma},z}_r\left(I_{d}-\hat{V}_r\delta^{\overline{\sigma},z}_r\right)^{-1}\check{Z}_r.
\end{multline*}
Using the distributive law we obtain
\begin{multline*}
 H_r=
\left(\varepsilon\delta^{f,\tilde{x}}_r+\delta^{f,z}_r\left(I_{d}-\hat{V}_r\delta^{\overline{\sigma},z}_r\right)^{-1}\hat{V}_r\varepsilon\mybox{\delta^{\overline{\sigma},\tilde{x}}_r}-\hat{V}_r\varepsilon\delta^{\overline{\mu},\tilde{x}}_r-\hat{Z}^\top_r\left(I_{d}-\hat{V}_r\delta^{\overline{\sigma},z}_r\right)^{-1}\varepsilon\mybox{\delta^{\overline{\sigma},\tilde{x}}_r}\right)\frac{\dx}{\dx \tilde{x}}\tilde{X}_r+ \\
+\left(\delta^{f,\overline{x}+y}_r-\hat{Z}^\top_r\left(I_{d}-\hat{V}_r\delta^{\overline{\sigma},z}_r\right)^{-1}\delta^{\overline{\sigma},y}_r\right)R_r+ \\
+\delta^{f,z}_r\Mybox{\left(I_{d}-\hat{V}_r\delta^{\overline{\sigma},z}_r\right)^{-1}\check{Z}_r}-\hat{Z}^\top_r\delta^{\overline{\sigma},z}_r\Mybox{\left(I_{d}-\hat{V}_r\delta^{\overline{\sigma},z}_r\right)^{-1}\check{Z}_r},
\end{multline*}
which further simplifies to
\begin{multline*}
 H_r=
\varepsilon\left(\delta^{f,\tilde{x}}_r+\left(\hat{V}_r\delta^{f,z}_r-\hat{Z}^\top_r\right)\left(I_{d}-\hat{V}_r\delta^{\overline{\sigma},z}_r\right)^{-1}\delta^{\overline{\sigma},\tilde{x}}_r-\hat{V}_r\delta^{\overline{\mu},\tilde{x}}_r\right)\frac{\dx}{\dx \tilde{x}}\tilde{X}_r+ \\
+\left(\delta^{f,\overline{x}+y}_r-\hat{Z}^\top_r\left(I_{d}-\hat{V}_r\delta^{\overline{\sigma},z}_r\right)^{-1}\delta^{\overline{\sigma},y}_r\right)R_r+ \\
+\left(\delta^{f,z}_r-\hat{Z}^\top_r\delta^{\overline{\sigma},z}_r\right)\left(I_{d}-\hat{V}_r\delta^{\overline{\sigma},z}_r\right)^{-1}\check{Z}_r.
\end{multline*}
Let us now deduce the dynamics of $\tilde{R}=R\tilde{U}$. We use the dynamics of $R$ we just obtained, as well as (\ref{invdyn}), which describes dynamics of $\tilde{U}$:
$$ \tilde{R}_s=\tilde{R}_T-\int_{s}^{T}\dx W^\top_r\dot{Z}_r-\int_{s}^{T}G_r\dx r, $$
where $\dot{Z}$ is $\mathbb{R}^{d\times (1\times N)}$ - valued and can be written as $\left(\dot{Z}^i\right)_{i=1,\ldots,d}$ such that
$$ \dot{Z}^i_r=\check{Z}^i_r\tilde{U}_r-R_r\tilde{U}_r\delta^{\tilde{\sigma},\tilde{x},i}_r=\mybox{\check{Z}^i_r\tilde{U}_r}-\tilde{R}_r\delta^{\tilde{\sigma},\tilde{x},i}_r, $$
for $i=1,\ldots,d$, and
$$ G_r=H_r\tilde{U}_r+R_r\tilde{U}_r\left(\sum_{i=1}^d\delta^{\tilde{\sigma},\tilde{x},i}_r\delta^{\tilde{\sigma},\tilde{x},i}_r-
\delta^{\tilde{\mu},\tilde{x}}_r\right)-\sum_{i=1}^d\mybox{\check{Z}^i_r\tilde{U}_r}\delta^{\tilde{\sigma},\tilde{x},i}_r= $$
$$ =H_r\tilde{U}_r+\tilde{R}_r\left(\sum_{i=1}^d\delta^{\tilde{\sigma},\tilde{x},i}_r\delta^{\tilde{\sigma},\tilde{x},i}_r-
\delta^{\tilde{\mu},\tilde{x}}_r\right)-\sum_{i=1}^d\left(\dot{Z}^i_r+\tilde{R}_r\delta^{\tilde{\sigma},\tilde{x},i}_r\right)\delta^{\tilde{\sigma},\tilde{x},i}_r
=\mybox{H_r}\tilde{U}_r-\tilde{R}_r\delta^{\tilde{\mu},\tilde{x}}_r-\sum_{i=1}^d\dot{Z}^i_r\delta^{\tilde{\sigma},\tilde{x},i}_r. $$
By plugging in the structure of $H$ and using $\left(\frac{\dx}{\dx\tilde{x}}\tilde{X}_r\right)\tilde{U}_r=I_N$ we obtain
\begin{multline*}
G_r=\varepsilon\left(\delta^{f,\tilde{x}}_r+\left(\hat{V}_r\delta^{f,z}_r-\hat{Z}^\top_r\right)\left(I_{d}-\hat{V}_r\delta^{\overline{\sigma},z}_r\right)^{-1}\delta^{\overline{\sigma},\tilde{x}}_r-\hat{V}_r\delta^{\overline{\mu},\tilde{x}}_r\right)+\\
+\left(\delta^{f,\overline{x}+y}_r-\hat{Z}^\top_r\left(I_{d}-\hat{V}_r\delta^{\overline{\sigma},z}_r\right)^{-1}\delta^{\overline{\sigma},y}_r\right)\mybox{\tilde{R}_r}+ \\
+\left(\delta^{f,z}_r-\hat{Z}^\top_r\delta^{\overline{\sigma},z}_r\right)\left(I_{d}-\hat{V}_r\delta^{\overline{\sigma},z}_r\right)^{-1}\Mybox{\check{Z}_r\tilde{U}_r}
-\mybox{\tilde{R}_r}\delta^{\tilde{\mu},\tilde{x}}_r-\sum_{i=1}^d\dot{Z}^i_r\delta^{\tilde{\sigma},\tilde{x},i}_r.
\end{multline*}
Knowing $\Mybox{\check{Z}^i_r\tilde{U}_r}=\dot{Z}^i_r+\tilde{R}_r\delta^{\tilde{\sigma},\tilde{x},i}_r$ we obtain
\begin{multline*}
G_r=\varepsilon\left(\delta^{f,\tilde{x}}_r+\left(\hat{V}_r\delta^{f,z}_r-\hat{Z}^\top_r\right)\left(I_{d}-\hat{V}_r\delta^{\overline{\sigma},z}_r\right)^{-1}\delta^{\overline{\sigma},\tilde{x}}_r-\hat{V}_r\delta^{\overline{\mu},\tilde{x}}_r\right)+\\
+\mybox{\tilde{R}_r}\left(-\delta^{\tilde{\mu},\tilde{x}}_r+\left(\delta^{f,\overline{x}+y}_r-\hat{Z}^\top_r\left(I_{d}-\hat{V}_r\delta^{\overline{\sigma},z}_r\right)^{-1}\delta^{\overline{\sigma},y}_r\right)I_N\right)+ \\
+\left(\delta^{f,z}_r-\hat{Z}^\top_r\delta^{\overline{\sigma},z}_r\right)\left(I_{d}-\hat{V}_r\delta^{\overline{\sigma},z}_r\right)^{-1}\dot{Z}_r+ \\
+\sum_{i=1}^d\left(\left(\delta^{f,z}_r-\hat{Z}^\top_r\delta^{\overline{\sigma},z}_r\right)\left(I_{d}-\hat{V}_r\delta^{\overline{\sigma},z}_r\right)^{-1}\right)^i\mybox{\tilde{R}_r}\delta^{\tilde{\sigma},\tilde{x},i}_r
-\sum_{i=1}^d\dot{Z}^i_r\delta^{\tilde{\sigma},\tilde{x},i}_r,
\end{multline*}
where $z^{i}$ refers to the $i$ - th component of a vector $z\in\mathbb{R}^{1\times d}$. \\
We can rewrite using distributive law:
\begin{multline}\label{Gexp}
G_r=\varepsilon\left(\delta^{f,\tilde{x}}_r+\left(\hat{V}_r\delta^{f,z}_r-\hat{Z}^\top_r\right)\left(I_{d}-\hat{V}_r\delta^{\overline{\sigma},z}_r\right)^{-1}\delta^{\overline{\sigma},\tilde{x}}_r-\hat{V}_r\delta^{\overline{\mu},\tilde{x}}_r\right)+\\
\shoveleft{+\tilde{R}_r\Bigg\{-\delta^{\tilde{\mu},\tilde{x}}_r+\left(\delta^{f,\overline{x}+y}_r-\hat{Z}^\top_r\left(I_{d}-\hat{V}_r\delta^{\overline{\sigma},z}_r\right)^{-1}\delta^{\overline{\sigma},y}_r\right)I_N+} \\
\shoveright{+\sum_{i=1}^d\left(\left(\delta^{f,z}_r-\hat{Z}^\top_r\delta^{\overline{\sigma},z}_r\right)\left(I_{d}-\hat{V}_r\delta^{\overline{\sigma},z}_r\right)^{-1}\right)^i\delta^{\tilde{\sigma},\tilde{x},i}_r\Bigg\}+} \\
+\left(\delta^{f,z}_r-\hat{Z}^\top_r\delta^{\overline{\sigma},z}_r\right)\left(I_{d}-\hat{V}_r\delta^{\overline{\sigma},z}_r\right)^{-1}\dot{Z}_r-\sum_{i=1}^d\dot{Z}^i_r\delta^{\tilde{\sigma},\tilde{x},i}_r.
\end{multline}
Now, remember that $Y$ satisfies
$$ Y_s=Y_T-\int_s^T\dx W^\top_r Z_r-\int_s^Tf(r,\tilde{X}_r,\overline{X}_r,Y_r,Z_r)\dx r,\qquad r\in[t_1,T]. $$
Also, $q_r:=f(r,\tilde{X}_r,\overline{X}_r,Y_r,0)$ is bounded by $\|f(\cdot,\cdot,\cdot,\cdot,0)\|_\infty$, and, furthermore, the difference
$$ f(r,\tilde{X}_r,\overline{X}_r,Y_r,Z_r)-f(r,\tilde{X}_r,\overline{X}_r,Y_r,0)=p_r Z_r, $$
where the bounded process $p$ is defined via
$$ p_r:=\left(\frac{1}{|Z_r|^2}\left(f(r,\tilde{X}_r,\overline{X}_r,Y_r,Z_r)-f(r,\tilde{X}_r,\overline{X}_r,Y_r,0)\right)Z^\top_r\right), $$
is bounded by $C(1+|Z_r|)$ due to our requirements for $\frac{\dx}{\dx z}f$.\\
The backward equation $Y_s=Y_T-\int_s^T\dx W^\top_r Z_r-\int_s^Tq_r+p_rZ_r\dx r$ together with the boundedness of $Y_T=\xi(\varepsilon\tilde{X}_T,\overline{X}_T)$ imply:
\begin{itemize}
\item $Y$ is uniformly bounded by $\|\xi\|_\infty+T\|f(\cdot,\cdot,\cdot,\cdot,0)\|_\infty$ (see Lemma A.1.10. in \cite{Fromm2015}, which is applicable since $Z$ is bounded),
\item $Z$ and therefore $\delta^{f,z}=\frac{\dx}{\dx z}f(\cdot,\tilde{X},\overline{X},Y,Z)$ are BMO - processes with $BMO(\mathbb{P})$ - norms controlled independently of $t_1$ and $\varepsilon$ (see Theorem A.1.11. in \cite{Fromm2015}). $\hfill\myqed$
\end{itemize}
Also, due to
\begin{itemize}
\item the dynamics of $\hat{V}$ given by (\ref{vhat}),
\item the uniform boundedness of $\hat{V}$ and $\left(I_{d}-\hat{V}\delta^{\overline{\sigma},z}\right)^{-1}$,
\item our requirements for $\frac{\dx}{\dx z}f$ and $\frac{\dx}{\dx (\overline{x}+y)}f$
\end{itemize}
Theorem A.1.11. in \cite{Fromm2015} is applicable to (\ref{vhat}) and we have that $\hat{Z}$ is also a BMO - process with a $BMO(\mathbb{P})$ - norm controlled independently of $t_1$ and $\varepsilon$. $\hfill\myqed$

Using (\ref{Gexp}) the process $\tilde{R}$ has dynamics
$$ \tilde{R}_s=\tilde{R}_T-\int_s^T\dx W^\top_r\dot{Z}_r-\int_s^T \left(\varepsilon\alpha_r+\tilde{R}_r\left(\delta^{f,\overline{x}+y}_rI_N+\beta_r\right)+\mu\dot{Z}_r+\sum_{i=1}^d\dot{Z}^i_r\gamma^i_r\right)\dx r, $$
where
\begin{itemize}
\item $\alpha$ is an $\mathbb{R}^{1\times (1\times N)}$-valued BMO process,
\item $\beta$ is an $\mathbb{R}^{(1\times N)\times (1\times N)}$-valued BMO process,
\item $\mu$  is an $\mathbb{R}^{1\times d}$-valued BMO process and
\item $\gamma^i$, $i=1,\ldots,d$ are bounded progressive $\mathbb{R}^{(1\times N)\times (1\times N)}$-valued processes,
\end{itemize}
such that the $BMO(\mathbb{P})$ - norms of $\alpha,\beta,\mu$ and the supremum norms of $\gamma^i$ can be controlled independently of $t_1$ and $\varepsilon$. \\
Also, note the relationship $\tilde{R}_T=\varepsilon\frac{\dx}{\dx\tilde{x}}\xi(\varepsilon\tilde{X}_T,\overline{X}_T)$, which is a direct consequence of the terminal condition $u(T,\tilde{x},\overline{x})=\xi(\varepsilon\tilde{x},\overline{x})$. So, $\tilde{R}_T$ is bounded by $\varepsilon L_{\xi,\tilde{x}}$. \\
We know that $\tilde{R}_s=\frac{\dx}{\dx \tilde{x}}u(s,\tilde{X}_s,\overline{X}_s)$ is a bounded process but not necessarily bounded independently of $t_1$, $\varepsilon$ (at this point). However, we can now apply Lemma A.1.7. in \cite{Fromm2015} to obtain
$$ \|\tilde{R}\|_\infty\leq C\varepsilon L_{\xi,\tilde{x}}+C\varepsilon\|\alpha\|_{BMO(\mathbb{P})}, $$
where $C\in (0,\infty)$ depends only on $T$, $\|\mu\|_{BMO(\mathbb{P})}$, $\|\beta\|_{BMO(\mathbb{P})}$ and $\|\gamma\|_\infty$ and is monotonically increasing in these values.

This shows that for $\varepsilon>0$ small enough $L_{u(t_1,\cdot),(\tilde{x},\overline{x})^\top}\leq K+\varepsilon \tilde{C}< L_{\overline{\sigma},z}^{-1}$ will hold independently of $t_1$, where $\tilde{C}$ is a constant, which does not depend on $t_1$ and $\varepsilon$. This contradicts the statement of Lemma \ref{EXPlosionM}. Therefore, the assumption $I^{M}_{\mathrm{max}}=(t^{M}_{\mathrm{min}},T]$ was wrong and so, $I^{M}_{\mathrm{max}}=[0,T]$ for $\varepsilon>0$ small enough is proven.
\end{proof}

\subsection{Main result}\label{mainresult}

Now, let us apply the above abstract result to solve the actual FBSDE (\ref{utility-FBSDE}): 

We want to investigate the solvability of the forward backward system given by the forward equation
$$ \tilde{X}_t=\tilde{x}+\int_{0}^t\tilde{\mu}(s,\tilde{X}_s)\dx s+\int_{0}^t\dx W^\top_s\tilde{\sigma}(s,\tilde{X}_s), $$
$$ X_t=x-\int_0^t\left(\dx W_s+\pi_{1}(\tilde\theta(s,\tilde{X}_s))\dx s\right)^\top\left(\pi_{1}(\tilde\theta(s,\tilde{X}_s))\frac{U'}{U''}(X_s+Y_s)+\pi_{1}(Z_s)\right)$$
and the backward equation
\begin{multline*}
Y_t=\tilde{H}(\tilde{X}_T,X_T)-\int_t^T\left(\dx W_s+\pi_{1}(\tilde\theta(s,\tilde{X}_s))\dx s\right)^\top Z_s-\\
-\int_t^T\left(|\pi_{1}(\tilde\theta(s,\tilde{X}_s))|^2\frac{U'}{U''}\left(1-\frac{1}{2}\frac{U^{(3)}U'}{(U'')^2}\right)(X_s+Y_s)-
\frac{1}{2}|\pi_{2}(Z_s)|^2\cdot\frac{U^{(3)}}{U''}(X_s+Y_s)\right)\dx s,
\end{multline*}
$t\in [0,T]$, where $\tilde{x}\in\mathbb{R}^{1\times N}$, $N\in\mathbb{N}$, $x\in\mathbb{R}$.

So, the problem is about finding progressively measurable processes $\tilde{X},X,Y,Z$ such that $\tilde{X}$ is $\mathbb{R}^{1\times N}$ - valued, $X$ and $Y$ are both $\mathbb{R}$ - valued while $Z$ is $\mathbb{R}^{d}$ - valued and such that the above FBSDE is satisfied.

We assume that $U$ satisfies condition (C1) and $\tilde{\mu},\tilde{\sigma},\tilde{H},\tilde{\theta}$ satisfy condition (C2).

\begin{thm}\label{finalresult}
Under these conditions the above problem has a unique solution $\tilde{X},X,Y,Z$ on $[0,T]$ satisfying $\|Z\|_\infty<\infty$.\\
Furthermore, the problem can be reduced to an MLLC problem with $I^{M}_{\mathrm{max}}=[0,T]$.
\end{thm}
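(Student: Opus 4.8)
The plan is to recognise the system as an instance of the abstract FBSDE \eqref{epssystem}, to check it meets all hypotheses of Theorem \ref{sc3}, to prove the one remaining hypothesis — a uniform bound on $\partial_{\overline x}u$ — by hand, and to read off the conclusion from Theorems \ref{sc3} and \ref{GLObalexistM} and Lemma \ref{UNIqXYZM}. For the reduction: since $\theta=\tilde\theta(\cdot,\tilde X)$ is bounded, $\mathcal{E}\bigl(-\int_0^{\cdot}\pi_1(\theta_s)^\top\dx W_s\bigr)$ is a true martingale; passing to the equivalent measure $\mathbb{Q}$ it defines, $\tilde W_t:=W_t+\int_0^t\pi_1(\theta_s)\dx s$ is a $\mathbb{Q}$-Brownian motion, and since $\tilde X$ is the strong solution of a Lipschitz SDE the filtrations of $W$ and $\tilde W$ coincide and the null sets are unchanged, so the theory of Section \ref{decf} applies verbatim under $(\mathbb{Q},\tilde W)$. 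Under $\mathbb{Q}$ the forward equation for $X$ becomes drift-free and the term $Z^\top\pi_1(\theta)$ disappears from the backward driver; rescaling $\tilde X$ by $1/\varepsilon$ as discussed before Theorem \ref{sc3}, the system takes exactly the shape \eqref{epssystem} with $\overline X=X$, $\overline\mu\equiv 0$, $\tilde\mu$ replaced by the (still Lipschitz, still bounded at $0$) drift $\tilde\mu-\tilde\sigma^\top\pi_1(\tilde\theta)$, $\xi=\tilde H$, and
\[
\overline\sigma=-\Bigl(\pi_1(\tilde\theta)\tfrac{U'}{U''}(\overline x+y)+\pi_1(z)\Bigr),\qquad f=|\pi_1(\tilde\theta)|^2\tfrac{U'}{U''}\Bigl(1-\tfrac12\tfrac{U^{(3)}U'}{(U'')^2}\Bigr)(\overline x+y)-\tfrac12|\pi_2(z)|^2\tfrac{U^{(3)}}{U''}(\overline x+y).
\]

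For the structural check, write $U''=-e^{-\kappa}$, so $U^{(3)}=-\kappa'U''$ and $(\ln(-U''))''=-\kappa''$, and, with $P:=U'/U''$, note $P'=1+\kappa'P$; by Lemma \ref{Uproperties} the quantities $P$, $U''/U'$, $U^{(3)}/U''=-\kappa'$ and $\kappa''$ are all bounded. A short computation gives
\[
\partial_{\overline x+y}f=|\pi_1(\tilde\theta)|^2\bigl((1+\kappa'P)^2+\tfrac12\kappa''P^2\bigr)+\tfrac12|\pi_2(z)|^2\kappa''\ \ge\ 0,
\]
and this is precisely where the assumption $\kappa''\ge 0$, i.e. $(\ln(-U''))''\le 0$, enters. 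The remaining structural requirements of Theorem \ref{sc3} — $f$ being a function of $t,\tilde x,\overline x+y,\pi_2(z)$ and $\overline\sigma$ of the block form $(\overline\sigma^{(1)},0)^\top$ with $\overline\sigma^{(1)}$ a function of $t,\tilde x,\overline x,y,\pi_1(z)$; differentiability of $\overline\sigma$ with bounded derivatives; $|\partial_zf|\le C(1+|z|)$, $|\partial_{\overline x+y}f|\le C(1+|z|^2)$, $\|f(\cdot,\cdot,\cdot,\cdot,0)\|_\infty<\infty$, $\|\partial_{\tilde x}f\|_\infty<\infty$; Lipschitz continuity of $f$ on bounded $z$-sets — all follow from boundedness of $P$, $\kappa'$, $\kappa''$, $\tilde\theta$ and $\partial_{\tilde x}\tilde\theta$. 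Finally $L_{\overline\sigma,z}=1$ (operator norm of $\pi_1$), $\|\xi\|_\infty<\infty$, and $L_{\xi,\overline x}=L_{\tilde H,x}<1=L_{\overline\sigma,z}^{-1}$ by (C2); for $\varepsilon$ small the full terminal Lipschitz constant also drops below $1$, so \eqref{epssystem} is an MLLC problem meeting all hypotheses of Theorem \ref{sc3}, the relevant constants being uniform in $\varepsilon$ since $\varepsilon$ enters only through $\tilde\theta(s,\varepsilon\,\cdot)$.

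The heart of the argument — and the main obstacle — is to exhibit $K<L_{\overline\sigma,z}^{-1}=1$ with $\sup_s\|\partial_{\overline x}u(s,\cdot,\cdot)\|_\infty\le K$ for \emph{every} weakly regular Markovian decoupling field $u$, uniformly in $t$ and in small $\varepsilon$; here the concavity structure of $U$ is essential. For such a $u$ — strongly regular by Theorem \ref{UNIqMREGulM} — and the associated $(\tilde X,X,Y,Z)$ for a constant initial value, set $v:=\mathrm{id}+u$, so $X_s+Y_s=v(s,\tilde X_s,X_s)$. Reverting momentarily to $\mathbb{P}$ (where the drift of $X$ is present and cancels an otherwise unwanted term), an Itô computation using only the identity $U^{(3)}=-\kappa'U''$ shows that $-U'(X_s+Y_s)$ is a \emph{local martingale}: the increasing $C^2$-bijection $w\mapsto -U'(w)$ of $\mathbb{R}$ onto $(-\infty,0)$ straightens the equation for $X+Y$. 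Differentiating $-U'\bigl(v(s,\tilde X_s,X_s)\bigr)$ in the initial value of $X$ (strong regularity and the chain rule of Lemma A.3.1 in \cite{Fromm2015}) yields another local martingale whose terminal value carries the factor $1+\partial_x\tilde H$ with $|\partial_x\tilde H|\le L_{\tilde H,x}<1$; combining this with the monotonicity $\partial_{\overline x+y}f\ge 0$ and with control of the sensitivity $\tfrac{\dx}{\dx x}X$ (itself a local martingale), and transporting the terminal bound backward, one deduces that $\partial_{\overline x}v=1+\partial_{\overline x}u$ remains in a compact subinterval of $(0,2)$ bounded away from both endpoints, uniformly in $t$ and $\varepsilon$; this supplies the desired $K$. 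Producing this \emph{uniform, two-sided} bound — as opposed to the merely $<1$ bound that weak regularity supplies for a single $u$ — is where the real work lies.

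Granting the estimate, Theorem \ref{sc3} gives an $\varepsilon>0$ with $I^M_{\mathrm{max}}=[0,T]$; Theorem \ref{GLObalexistM} then furnishes a unique deterministic, continuous, strongly regular, $z$-controlled Markovian decoupling field $u$ on $[0,T]$, and Lemma \ref{UNIqXYZM} produces, for the prescribed initial data, a unique solution $(\tilde X^\varepsilon,X,Y,Z)$ of \eqref{epssystem} with square-integrable forward and backward parts and $\|Z\|_\infty<\infty$. Undoing the rescaling ($\tilde X=\varepsilon\tilde X^\varepsilon$) and the measure change (which leaves the processes untouched, altering only the driving Brownian motion and the reference measure) turns this into the asserted unique solution $(\tilde X,X,Y,Z)$ of the original FBSDE on $[0,T]$ with $\|Z\|_\infty<\infty$; conversely any solution of the original system with bounded control transforms into a solution of the MLLC problem with bounded control and hence coincides with it. This proves the theorem and yields the claimed reduction to an MLLC problem with $I^M_{\mathrm{max}}=[0,T]$.
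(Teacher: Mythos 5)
Your overall frame coincides with the paper's: Girsanov change to $\mathbb{Q}$ to absorb the drift of $X$ and the $Z^\top\pi_1(\theta)$ term, the $1/\varepsilon$ rescaling of $\tilde X$ to shrink $L_{\xi,\tilde x}$, the verification of the structural hypotheses of Theorem \ref{sc3} (your computation $\partial_{\overline x+y}f=|\pi_1(\tilde\theta)|^2\bigl((1+\kappa'P)^2+\tfrac12\kappa''P^2\bigr)+\tfrac12|\pi_2(z)|^2\kappa''\ge 0$ is exactly the paper's, rewritten in terms of $\kappa$), and the final assembly via Theorems \ref{sc3}, \ref{GLObalexistM} and Lemma \ref{UNIqXYZM}. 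All of that is correct.

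The gap is the one step you yourself flag as ``where the real work lies'': the \emph{uniform} two-sided bound on $\partial_{\overline x}u$. You assert that differentiating the local martingale $-U'(X+Y)$ in the initial value, invoking $\partial_{\overline x+y}f\ge 0$ and positivity of $\tfrac{\dx}{\dx x}X$, and ``transporting the terminal bound backward'' yields $1+\partial_{\overline x}u$ confined to a compact subinterval of $(0,2)$ uniformly in $t$, $u$ and $\varepsilon$ — but no derivation is given, and the mechanism does not obviously deliver uniformity. Concretely: writing $N_s:=(-U'')(X_s+Y_s)\,\tfrac{\dx}{\dx x}X_s$, your local martingale is $(1+\hat V_s)N_s$ with terminal value $(1+\partial_x\tilde H)N_T$; even granting the true-martingale property, extracting $1+\hat V_s\ge c>0$ requires $\mathbb{E}[N_T\,|\,\mathcal{F}_s]/N_s$ to be bounded below by a positive constant uniformly, i.e. an (approximate) submartingale property for $N$, which you neither state nor check — and whose drift involves precisely the unbounded quantity $|\pi_2(Z)|^2$. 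This is the crux: $\partial_{\overline x+y}f$ contains the term $\tfrac12|\pi_2(Z)|^2\kappa''(X+Y)$, and $Z$ is bounded for each fixed $(t_1,\varepsilon)$ but not uniformly. The paper's proof handles this by deriving the BSDE for $\ln(1+\hat V)$ (equation \eqref{lndym}), performing a further Girsanov change to $\tilde{\mathbb{Q}}$, and then — the genuinely hard part — bounding $\mathbb{E}_{\tilde{\mathbb{Q}}}\bigl[\int_t^T|\pi_2(Z_r)|^2\kappa''(P_r)\dx r\bigr]$ uniformly via an It\^o expansion of the auxiliary bounded process $\varphi(P)=\tfrac{U'}{U''}(X+Y)$, in which the identities $\varphi'=1+\varphi\kappa'$, $\varphi''=\varphi'\kappa'+\varphi\kappa''$ make the coefficient of $|\pi_2(Z)|^2$ collapse to $\varphi\kappa''$, and the remaining cross term $\beta^\top\overline Z$ is absorbed by the good term $-\tfrac12|\overline Z|^2$ via Young's inequality. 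None of this appears in your proposal, so the hypothesis of Theorem \ref{sc3} is not actually established. (The upper bound $\partial_{\overline x}u\le L_{\tilde H,x}$ is the easy half and your sketch — submartingale property of a transform of $1+\hat V$ coming from $\partial_{\overline x+y}f\ge 0$ plus the terminal factor $1+\partial_x\tilde H$ — does match the paper's argument in spirit.)
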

\begin{proof}
Note that the forward equation for $\tilde{X}$ has a unique solution which can be obtained independently of the other parts of the problem, since $\tilde{X}$ satisfies a rather standard SDE. So, our task is really about establishing existence and uniqueness of $X,Y,Z$. \\
We define a Brownian motion with drift $B$ via
$$ B_s=W_s+\int_0^s \pi_{1}(\tilde\theta(r,\tilde{X}_r))\dx r,\quad s\in[0,T]. $$
Note that $B$ is a Brownian motion under some probability measure $\mathbb{Q}\sim\mathbb{P}$. $\tilde{X}$ has dynamics
$$ \tilde{X}_t=\tilde{x}+\int_{0}^t\left(\tilde{\mu}-\pi_1(\tilde\theta)^\top\tilde{\sigma}\right)(r,\tilde{X}_r)\dx r+\int_{0}^t\dx B^\top_r\tilde{\sigma}(r,\tilde{X}_r), $$
which describes a uniquely solvable Lipschitz problem, so $\tilde{X}$ is adapted w.r.t. the filtration generated by $B$ (and augmented by $\mathcal{F}_0$), which in turn implies that $W=B-\int_0^\cdot \pi_{1}(\tilde\theta(r,\tilde{X}_r)\dx r$ is adapted w.r.t. the filtration generated by $B$ as well. So, $W$ and $B$ generate the same filtration $(\mathcal{F}_t)_{t\in[0,T]}$. \\
We now introduce a slightly modified problem: For $\varepsilon>0$ consider the system given by the forward equation
$$ \check{X}_s=\check{x}+\int_{t}^s\frac{1}{\varepsilon}\left(\tilde{\mu}-\pi_1(\tilde\theta)^\top\tilde{\sigma}\right)(r,\varepsilon\check{X}_r)\dx r+\int_{t}^s\dx B^\top_r\frac{1}{\varepsilon}\tilde{\sigma}(r,\varepsilon\check{X}_r), $$
\begin{equation}\label{forward2} X_s=x-\int_t^s\dx B_r^\top\left(\pi_{1}(\tilde\theta(r,\varepsilon\check{X}_r))\frac{U'}{U''}(X_r+Y_r)+\pi_{1}(Z_r)\right), \end{equation}
and the backward equation
\begin{multline}\label{backwardH}
Y_s=\tilde{H}(\varepsilon\check{X}_T,X_T)-\int_s^T\dx B_r^\top Z_r-\\
-\int_s^T\left(|\pi_{1}(\tilde\theta(r,\varepsilon\check{X}_r))|^2\frac{U'}{U''}\left(1-\frac{1}{2}\frac{U^{(3)}U'}{(U'')^2}\right)(X_r+Y_r)-
\frac{1}{2}|\pi_{2}(Z_r)|^2\cdot\frac{U^{(3)}}{U''}(X_r+Y_r)\right)\dx r,
\end{multline}
$s\in[t,T]$.
This new forward-backward system is completely equivalent to the preceding one in the sense that $\tilde{X},X,Y,Z$ solve the initial system if and only if $\check{X}:=\frac{1}{\varepsilon}\tilde{X},X,Y,Z$ solve the new system. So, it remains to show, that for some $\varepsilon>0$ the new system will have a unique solution with bounded $Z$. For that purpose we apply Theorem \ref{sc3} to show that for the above problem $I^{M}_{\mathrm{max}}=[0,T]$ will hold for some $\varepsilon>0$. Let us first check, that the system satisfies the structural requirements of Theorem \ref{sc3}: \\
Firstly, observe the following properties of $U$:
\begin{itemize}
\item According to Lemma \ref{Uproperties} the functions $\frac{U'}{U''},\frac{U''}{U'}$ and $\frac{U^{(3)}}{U''}$ are bounded,
\item $(\ln(-U''))''=-\kappa''$ is non-positive and bounded,
\item $U''$ is point-wise negative. Furthermore,
\item $\frac{U'}{U''}$, $\frac{U'}{U''}\left(1-\frac{1}{2}\frac{U^{(3)}U'}{(U'')^2}\right)$ and $\frac{U^{(3)}}{U''}$ are bounded and Lipschitz continuous:

Note that the product or the sum of two bounded and Lipschitz continuous functions is again bounded and Lipschitz continuous. Thus we only need boundedness and Lipschitz continuity of $\frac{U'}{U''}$ and $\frac{U^{(3)}}{U''}$. Boundedness is already known. Furthermore,
\begin{equation}\label{upeq1}  \left(\frac{U'}{U''}\right)'=\frac{(U'')^2-U'U^{(3)}}{(U'')^2}=\mybox{1-\frac{U'}{U''}\cdot\frac{U^{(3)}}{U''}} \,\textrm{ and also} \end{equation}
$$ (\ln(-U''))'=\frac{-U^{(3)}}{-U''}=\frac{U^{(3)}}{U''},\textrm{ which implies}$$
\begin{equation}\label{upeq2}  \left(\frac{U^{(3)}}{U''}\right)'=\mybox{(\ln(-U''))''}, \end{equation}
where both marked expressions are bounded. $\hfill\myqed$
\item $\left(\frac{U'}{U''}\left(1-\frac{1}{2}\frac{U^{(3)}U'}{(U'')^2}\right)\right)'\geq 0$ and $\left(\frac{U^{(3)}}{U''}\right)'\leq 0$:

The second inequality is clear, since $\left(\frac{U^{(3)}}{U''}\right)'=(\ln(-U''))''\leq 0$ as we saw. The first requires a bit more calculation: Using the product rule together with (\ref{upeq1}) and (\ref{upeq2}):
$$\left(\frac{U'}{U''}\left(1-\frac{1}{2}\frac{U^{(3)}U'}{(U'')^2}\right)\right)'=\left(1-\frac{U'}{U''}\cdot\frac{U^{(3)}}{U''}\right)\left(1-\frac{1}{2}\frac{U^{(3)}U'}{(U'')^2}\right)-$$
$$-\frac{U'}{U''}\frac{1}{2}\left((\ln(-U''))''\frac{U'}{U''}+\frac{U^{(3)}}{U''}\left(1-\frac{U'}{U''}\cdot\frac{U^{(3)}}{U''}\right)\right)=$$
$$=\left(1-\frac{U'}{U''}\cdot\frac{U^{(3)}}{U''}\right)\left(1-\frac{1}{2}\frac{U^{(3)}U'}{(U'')^2}-\frac{U'}{U''}\frac{1}{2}\frac{U^{(3)}}{U''}\right)-\frac{U'}{U''}\frac{1}{2}(\ln(-U''))''\frac{U'}{U''}= $$
$$=\left(1-\frac{U'}{U''}\cdot\frac{U^{(3)}}{U''}\right)^2-\frac{1}{2}\left(\frac{U'}{U''}\right)^2(\ln(-U''))''\geq 0. $$
$\hfill\myqed$
\end{itemize}
Using the notation of the previous section the parameter functions $\overline{\mu},\overline{\sigma},f$ implied by the above problem (\ref{forward2}), (\ref{backwardH}) satisfy:
\begin{itemize}
\item $\overline{\mu}$ vanishes,
\item $\overline\sigma$ and $f$ are differentiable in $\check{x},x,y,z$ such that all of the partial derivatives are uniformly bounded except for $\frac{\dx}{\dx (x+y)}f$ and $\frac{\dx}{\dx z}f$. This boundedness comes from boundedness and Lipschitz continuity of $\tilde\theta$ together with the aforementioned properties of $U$.
\end{itemize}
The generator $f$ of the backward equation satisfies the structural requirements of Theorem \ref{sc3}:
\begin{itemize}
\item It is a function of $s,\check{x},x+y,\pi_2(z)$ and has quadratic growth in $\pi_2(z)$, while
\item its derivative w.r.t. $z$ has linear growth in $\pi_2(z)$ and
\item its derivative w.r.t. $x+y$ has quadratic growth in $\pi_2(z)$. It is also non-negative according to the aforementioned properties of $U$. Finally,
\item $f(\cdot,\cdot,\cdot,\cdot,0)$ is uniformly bounded.
\end{itemize}
The parameter functions $\overline\sigma$ and $\tilde{H}$ also have the structure required by Theorem \ref{sc3}: Note here that $\pi_2(\overline\sigma)=0$ and $\overline{\sigma}$ is a function of $s,\check{x},x+y,\pi_1(z)$. Also, $L_{\overline\sigma,z}=1$ such that $L_{\tilde{H},x}<1=L_{\sigma,z}^{-1}$.
\vspace{2mm}

In order to apply Theorem \ref{sc3} we merely need to control $\frac{\dx}{\dx x}u$ uniformly for every weakly regular Markovian decoupling field $u:[t,T]\times\mathbb{R}^{1\times N}\times\mathbb{R}\rightarrow\mathbb{R}$ to the above problem for small $\varepsilon>0$. This control has to be independent of $u$, $t$ and $\varepsilon$. \\
For this purpose we seek to control $\hat{V}_r:=\frac{\dx}{\dx x}u(r,\check{X}_r,X_r)$, $r\in[t,T]$. Note that w.l.o.g. $\hat{V}$ is bounded by $L_{u,(\tilde{x},x)}<L_{\overline\sigma,z}^{-1}=1$.
Using notations from the proof of Theorem \ref{sc3} we have similarly to (\ref{vhat}):
\begin{multline}\label{vhat2}
\hat{V}_{s}=\hat{V}_{T}-\int_{s}^{T}\dx B^\top_r\hat{Z}_r-
\int_{s}^{T}\Bigg\{\delta^{f,x+y}_r+\delta^{f,x+y}_r\hat{V}_r+\mybox{\delta^{f,z}_r\left(I_{d}-\hat{V}_r\delta^{\overline{\sigma},z}_r\right)^{-1}}\hat{Z}_r- \\
-\hat{Z}_r^\top\left(\delta^{\overline\sigma,x}_r+\hat{V}_r\delta^{\overline{\sigma},y}_r+\mybox{\delta^{\overline{\sigma},z}_r\left(I_{d}-\hat{V}_r\delta^{\overline{\sigma},z}_r\right)^{-1}}\left(\hat{Z}_r+
\hat{V}_r\left(\delta^{\overline\sigma,x}_r+\delta^{\overline{\sigma},y}_r\hat{V}_r\right)\right)\right)\Bigg\}\dx r,\quad s\in[t,T].\,
\end{multline}
In our case $\delta^{\overline{\sigma},z}_r$ is equal to the diagonal $d\times d$ - matrix having the value $-1$ in the first $d_1$ diagonal entries and $0$ everywhere else. Therefore,
\begin{itemize}
\item $\left(I_{d}-\hat{V}_r\delta^{\overline{\sigma},z}_r\right)^{-1}$ is a diagonal matrix having $(1+\hat{V}_r)^{-1}=\frac{1}{1+\hat{V}_r}$ in the first $d_1$ diagonal entries and $1$ in the others. So,
\item $\delta^{f,z}_r\left(I_{d}-\hat{V}_r\delta^{\overline{\sigma},z}_r\right)^{-1}=\delta^{f,z}_r$, since the first $d_1$ diagonal entries of $\delta^{f,z}_r$ vanish. Furthermore,
\item $\delta^{\overline\sigma,z}_r\left(I_{d}-\hat{V}_r\delta^{\overline{\sigma},z}_r\right)^{-1}$ is a diagonal matrix having $-(1+\hat{V}_r)^{-1}$ in the first $d_1$ diagonal entries and $0$ everywhere else.
\end{itemize}
So, we can simplify (\ref{vhat2}):
\begin{multline*}
\hat{V}_{s}=\hat{V}_{T}-\int_{s}^{T}\dx B^\top_r\hat{Z}_r-
\int_{s}^{T}\bigg(\delta^{f,x+y}_r+\delta^{f,x+y}_r\hat{V}_r+\delta^{f,z}_r\hat{Z}_r- \\
-\hat{Z}_r^\top\left(\mybox{\delta^{\overline\sigma,x}_r+\hat{V}_r\delta^{\overline{\sigma},y}_r}-\left(1+\hat{V}_r\right)^{-1}\pi_1\left(\hat{Z}_r+
\hat{V}_r\left(\mybox{\delta^{\overline\sigma,x}_r+\delta^{\overline{\sigma},y}_r\hat{V}_r}\right)\right)\right)\bigg)\dx r.
\end{multline*}
Now, use $\delta^{\overline{\sigma},y}_r=\delta^{\overline\sigma,x}_r=\pi_1(\delta^{\overline\sigma,x}_r)$ to see
\begin{itemize}
\item $\delta^{\overline\sigma,x}_r+\delta^{\overline{\sigma},y}_r\hat{V}_r=\delta^{\overline\sigma,x}_r(1+\hat{V}_r)$ and
\item $\delta^{\overline\sigma,x}_r+\hat{V}_r\delta^{\overline{\sigma},y}_r-\left(1+\hat{V}_r\right)^{-1}\pi_1\left(\hat{V}_r\left(\delta^{\overline\sigma,x}_r+\delta^{\overline{\sigma},y}_r\hat{V}_r\right)\right)=\delta^{\overline\sigma,x}_r+\hat{V}_r\delta^{\overline{\sigma},x}_r-\pi_1\left(\hat{V}_r\delta^{\overline{\sigma},x}_r\right)=\delta^{\overline\sigma,x}_r$,
\end{itemize}
such that we obtain
\begin{multline*}
\hat{V}_{s}=\hat{V}_{T}-\int_{s}^{T}\dx B^\top_r\hat{Z}_r-\int_{s}^{T}\bigg(\delta^{f,x+y}_r+\delta^{f,x+y}_r\hat{V}_r+\delta^{f,z}_r\hat{Z}_r-\\
-\hat{Z}_r^\top\left(\delta^{\overline\sigma,x}_r\right)+\hat{Z}_r^\top\left(1+\hat{V}_r\right)^{-1}\pi_1\left(\hat{Z}_r\right)\bigg)\dx r,
\end{multline*}
or
$$
\hat{V}_{s}=\hat{V}_{T}-\int_{s}^{T}\dx B^\top_r\hat{Z}_r-
\int_{s}^{T}\bigg(\delta^{f,x+y}_r(1+\hat{V}_r)+\left(\delta^{f,z}_r-\left(\delta^{\overline\sigma,x}_r\right)^\top\right)\hat{Z}_r+\left(1+\hat{V}_r\right)^{-1}\left|\pi_1\left(\hat{Z}_r\right)\right|^2\bigg)\dx r.
$$
Now, apply the It\^o formula to $\ln(1+\hat{V}_s)$:
\begin{multline*}
\ln(1+\hat{V}_s)=\ln(1+\hat{V}_T)-\int_{s}^{T}\dx B^\top_r(1+\hat{V}_r)^{-1}\hat{Z}_r- \\
-\int_{s}^{T}\bigg(\left(1+\hat{V}_r\right)^{-1}\left(\delta^{f,x+y}_r(1+\hat{V}_r)+\left(\delta^{f,z}_r-\left(\delta^{\overline\sigma,x}_r\right)^\top\right)\hat{Z}_r+\left(1+\hat{V}_r\right)^{-1}\left|\pi_1\left(\hat{Z}_r\right)\right|^2\right)- \\
-\frac{1}{2}\left(1+\hat{V}_r\right)^{-2}|\hat{Z}_r|^2\bigg)\dx r
\end{multline*}
which after defining $\overline{Z}:=(1+\hat{V})^{-1}\hat{Z}$ simplifies to
\begin{multline}\label{lndym}
\ln(1+\hat{V}_s)=\ln(1+\hat{V}_T)-\int_{s}^{T}\dx B^\top_r\overline{Z}_r- \\
-\int_{s}^{T}\bigg(\delta^{f,x+y}_r+\left(\delta^{f,z}_r-\left(\delta^{\overline\sigma,x}_r\right)^\top\right)\overline{Z}_r+
\left|\pi_1\left(\overline{Z}_r\right)\right|^2-\frac{1}{2}|\overline{Z}_r|^2\bigg)\dx r.
\end{multline}
Let us rewrite this equation as
\begin{multline*}
\ln(1+\hat{V}_s)=\ln(1+\hat{V}_T)-\int_{s}^{T}\left(\dx B_r+\left(\left(\delta^{f,z}_r\right)^\top-\delta^{\overline\sigma,x}_r+\pi_1(\overline{Z}_r)-\frac{1}{2}\overline{Z}_r\right)\dx r\right)^\top\overline{Z}_r- \\
-\int_{s}^{T}\delta^{f,x+y}_r\dx r.
\end{multline*}
Since $\ln(1+\hat{V})$ is a bounded process, $\overline{Z}$ is a BMO process under $\mathbb{Q}$ according to \eqref{lndym} and Theorem A.1.11. in \cite{Fromm2015}. Furthermore, $Z$ and, thereby, $\delta^{f,z}$ is bounded. Therefore, using a Girsanov measure change we get after exploiting $\delta^{f,x+y}\geq 0$:
\begin{multline*}
\ln\left(1+\frac{\dx}{\dx x}u(t,\check{x},x)\right)=\mathbb{E}_{\mathbb{Q}_1}\left[\ln\left(1+\hat{V}_s\right)\right]\leq \\
\leq \mathbb{E}_{\mathbb{Q}_1}\left[\ln\left(1+\hat{V}_T\right)\right]\leq \ln\left(1+\left\|\frac{\dx}{\dx x}H\right\|_\infty\right)=\ln(1+L_{H,x}),
\end{multline*}
under some probability measure $\mathbb{Q}_1\sim\mathbb{Q}\sim\mathbb{P}$. This simplifies to $\frac{\dx}{\dx x}u(t,\check{x},x)\leq L_{\tilde{H},x}<L_{\sigma,z}^{-1}$ for almost all $\check{x},x$. Similarly $\frac{\dx}{\dx x}u(s,\cdot,\cdot)\leq L_{\tilde{H},x}<L_{\sigma,z}^{-1}$ a.e. for $s\in[t,T]$, since the same arguments can be applied to the weakly regular Markovian decoupling field $u|_{[s,T]}$.
\vspace{2mm}

Uniformly controlling $\frac{\dx}{\dx x}u$ from below is, however, a bit more challenging and will be based on a rather deep exploitation of the specific structure of the forward-backward system:

Define a Brownian motion with drift via
$$ \tilde{B}_s:=B_s-B_t+\int_t^s\left(\left(\delta^{f,z}_r\right)^\top-\delta^{\overline\sigma,x}_r+\pi_1\left(\overline{Z}_r\right)\right)\dx r $$
The BSDE (\ref{lndym}) can also be rewritten as
$$
\ln(1+\hat{V}_s)=\ln(1+\hat{V}_T)-\int_{s}^{T}\dx \tilde{B}^\top_r\overline{Z}_r-\int_{s}^{T}\bigg(\delta^{f,x+y}_r-\frac{1}{2}\left|\overline{Z}_r\right|^2\bigg)\dx r.
$$
$\tilde{B}$ is a Brownian motion under some probability measure $\tilde{\mathbb{Q}}\sim\mathbb{Q}$ (see Theorem 2.3.\ in \cite{kazam}), so
$$
\mathbb{E}_{\tilde{\mathbb{Q}}}[\ln(1+\hat{V}_s)]=\mathbb{E}_{\tilde{\mathbb{Q}}}[\ln(1+\hat{V}_T)]-\mathbb{E}_{\tilde{\mathbb{Q}}}\left[\int_{s}^{T}\left(\delta^{f,x+y}_r-\frac{1}{2}\left|\overline{Z}_r\right|^2\right)\dx r\right],\quad s\in[t,T].
$$
In order to control $\mathbb{E}_{\tilde{\mathbb{Q}}}[\ln(1+\hat{V}_t)]$ from below we need to control $\mathbb{E}_{\tilde{\mathbb{Q}}}\left[\int_{t}^{T}\left(\delta^{f,x+y}_r-\frac{1}{2}\left|\overline{Z}_r\right|^2\right)\dx r\right]$ from above.
Remembering the structure of $f$ we have:
\begin{equation}\label{fxy}\delta^{f,x+y}_r=|\pi_{1}(\tilde\theta(r,\varepsilon\check{X}_r))|^2\left(\frac{U'}{U''}\left(1-\frac{1}{2}\frac{U^{(3)}U'}{(U'')^2}\right)\right)'(X_r+Y_r)-
\frac{1}{2}|\pi_{2}(Z_r)|^2\cdot\left(\frac{U^{(3)}}{U''}\right)'(X_r+Y_r). \end{equation}
Now, define $P:=X+Y$. By summing up the forward equation for $X$ and the backward equation (\ref{backwardH}) we obtain the dynamics of $P$:
\begin{multline*}
P_s=P_T-\int_s^T\dx B_r^\top \left(\pi_{2}(Z_r)-\pi_{1}(\tilde\theta(r,\varepsilon\check{X}_r))\frac{U'}{U''}(P_r)\right)-\\
-\int_s^T\left(|\pi_{1}(\tilde\theta(r,\varepsilon\check{X}_r))|^2\frac{U'}{U''}\left(1-\frac{1}{2}\frac{U^{(3)}U'}{(U'')^2}\right)(P_r)-
\frac{1}{2}|\pi_{2}(Z_r)|^2\cdot\frac{U^{(3)}}{U''}(P_r)\right)\dx r,
\end{multline*}
Now, define $\varphi:=\frac{U'}{U''}$. Clearly, $\varphi$ is negative. We also know that it is bounded and also bounded away from $0$. Remember $\kappa=-\ln(-U'')$, so $\kappa'=-\frac{U^{(3)}}{U''}$. According to the It\^o formula the bounded process $\varphi(P)$ has dynamics
\begin{multline*}
\varphi(P_s)=\varphi(P_T)-\int_s^T\dx B_r^\top \varphi'(P_r)\left(\pi_{2}(Z_r)-\pi_{1}(\tilde\theta(r,\varepsilon\check{X}_r))\varphi(P_r)\right)-\\
-\int_s^T\Bigg\{\varphi'(P_r)\left(|\pi_{1}(\tilde\theta(r,\varepsilon\check{X}_r))|^2\varphi\cdot\left(1+\frac{1}{2}\varphi\kappa'\right)(P_r)+ \frac{1}{2}|\pi_{2}(Z_r)|^2\kappa'(P_r)\right)+ \\
+\frac{1}{2}\varphi''(P_r)\mybox{\left|\pi_{2}(Z_r)-\pi_{1}(\tilde\theta(r,\varepsilon\check{X}_r))\varphi(P_r)\right|^2}\Bigg\}\dx r.
\end{multline*}
Note
$$\left|\pi_{2}(Z_r)-\pi_{1}(\tilde\theta(r,\varepsilon\check{X}_r))\varphi(P_r)\right|^2=\left|\pi_{2}(Z_r)\right|^2+\left|\pi_{1}(\tilde\theta(r,\varepsilon\check{X}_r))\varphi(P_r)\right|^2, $$
due to orthogonality. So, after regrouping the terms we have
\begin{multline}\label{philast}
\varphi(P_r)=\varphi(P_T)-\int_s^T\dx B_r^\top \varphi'(P_r)\left(\pi_{2}(Z_r)-\pi_{1}(\tilde\theta(r,\varepsilon\check{X}_r))\varphi(P_r)\right)-\\
-\int_s^T\Bigg(|\pi_{1}(\tilde\theta(r,\varepsilon\check{X}_r))|^2\left(\varphi'\varphi\cdot\left(1+\frac{1}{2}\varphi\kappa'\right)+\frac{1}{2}\varphi''\varphi^2\right)(P_r)+ \frac{1}{2}|\pi_{2}(Z_r)|^2\left(\varphi'\kappa'+\varphi''\right)(P_r)\Bigg)\dx r.
\end{multline}
Now, consider the definition of $\tilde{B}$. Due to the structure of $f$ and $\overline\sigma$ we have $\delta^{f,z}_r=\pi_2(Z_r)^\top\kappa'(P_r)$ and $\delta^{\overline\sigma,x}_r=-\pi_{1}(\tilde\theta(r,\varepsilon\check{X}_r))\varphi'(P_r)$, so
\begin{multline*}
\int_s^T\dx\tilde{B}_r^\top \varphi'(P_r)\left(\pi_{2}(Z_r)-\pi_{1}(\tilde\theta(r,\varepsilon\check{X}_r))\varphi(P_r)\right)=\\
=\int_s^T\dx B_r^\top \varphi'(P_r)\left(\pi_{2}(Z_r)-\pi_{1}(\tilde\theta(r,\varepsilon\check{X}_r))\varphi(P_r)\right)+ \\
+\int_s^T\left(|\pi_2(Z_r)|^2\varphi'\kappa'(P_r)-|\pi_{1}(\tilde\theta(r,\varepsilon\check{X}_r))|^2\varphi(\varphi')^2(P_r)-\pi_1(\overline{Z}_r)^{\top}\pi_{1}(\tilde\theta(r,\varepsilon\check{X}_r))\varphi'\varphi(P_r)\right)\dx r,
\end{multline*}
or
\begin{multline*}
-\int_s^T\dx B_r^\top \varphi'(P_r)\left(\pi_{2}(Z_r)-\pi_{1}(\tilde\theta(r,\varepsilon\check{X}_r))\varphi(P_r)\right)=\\
=-\int_s^T\dx\tilde{B}_r^\top \varphi'(P_r)\left(\pi_{2}(Z_r)-\pi_{1}(\tilde\theta(r,\varepsilon\check{X}_r))\varphi(P_r)\right)- \\
-\int_s^T\left(-|\pi_2(Z_r)|^2\varphi'\kappa'(P_r)+|\pi_{1}(\tilde\theta(r,\varepsilon\check{X}_r))|^2\varphi(\varphi')^2(P_r)+\pi_1(\overline{Z}_r)^{\top}\pi_{1}(\tilde\theta(r,\varepsilon\check{X}_r))\varphi'\varphi(P_r)\right)\dx r,
\end{multline*}
which together with (\ref{philast}) yields
\begin{multline*}
\varphi(P_s)=\varphi(P_T)-\int_s^T\dx\tilde{B}_r^\top \varphi'(P_r)\left(\pi_{2}(Z_r)-\pi_{1}(\tilde\theta(r,\varepsilon\check{X}_r))\varphi(P_r)\right)-\\
-\int_s^T\Bigg(|\pi_{1}(\tilde\theta(r,\varepsilon\check{X}_r))|^2\left(\varphi'\varphi\left(1+\varphi'+\frac{1}{2}\varphi\kappa'\right)+\frac{1}{2}\varphi''\varphi^2\right)(P_r)+  \\
+\frac{1}{2}|\pi_{2}(Z_r)|^2\left(\mybox{\varphi'\kappa'+\varphi''-2\kappa'\varphi'}\right)(P_r)+\varphi'\varphi(P_r)\pi_{1}(\tilde\theta(r,\varepsilon\check{X}_r))^\top\overline{Z}_r\Bigg)\dx r.
\end{multline*}
We have using the chain rule:
\begin{itemize}
\item $\varphi'=\frac{U''U''-U'U^{(3)}}{(U'')^2}=1+\varphi\kappa'$, which is bounded. Furthermore,
\item $\varphi''=\varphi'\kappa'+\varphi\kappa''$, which is also bounded. Finally
\item $\varphi'\kappa'+\varphi''-2\kappa'\varphi'=\varphi'\kappa'+\varphi'\kappa'+\varphi\kappa''-2\kappa'\varphi'=\varphi\kappa''$.
\end{itemize}
And so we have after applying conditional expectations
$$ \mathbb{E}_{\tilde{\mathbb{Q}}}\left[\varphi(P_t)\right]=\mathbb{E}_{\tilde{\mathbb{Q}}}\left[\varphi(P_T)\right]-
\mathbb{E}_{\tilde{\mathbb{Q}}}\left[\int_t^T\Bigg(\alpha_s+ \frac{1}{2}\varphi\kappa''(P_s)|\pi_{2}(Z_s)|^2+\beta^\top_s\overline{Z}_s\Bigg)\dx s\right], $$
with some uniformly bounded progressively measurable processes $\alpha$, $\beta$. This means that
$$ 0\leq\mathbb{E}_{\tilde{\mathbb{Q}}}\left[\int_t^T\frac{-1}{2}\varphi\kappa''(P_s)|\pi_{2}(Z_s)|^2\dx s\right]\leq
\mathbb{E}_{\tilde{\mathbb{Q}}}\left[\int_t^T\beta^\top_s\overline{Z}_s\dx s\right]+T\|\alpha\|_\infty+2\|\varphi\|_\infty. $$
Now, note that $\delta^{f,x+y}_r=\frac{1}{2}|\pi_{2}(Z_r)|^2\kappa''(P_r)+\gamma_r$, with some uniformly bounded process $\gamma$, according to (\ref{fxy}).
This implies considering $\varphi<0$:
$$ \mathbb{E}_{\tilde{\mathbb{Q}}}\left[\int_{t}^{T}\left(\delta^{f,x+y}_r-\frac{1}{2}\left|\overline{Z}_r\right|^2\right)\dx r\right]=
\mathbb{E}_{\tilde{\mathbb{Q}}}\left[\int_{t}^{T}\left(\frac{1}{-\varphi}\frac{-1}{2}\varphi\kappa''(P_r)|\pi_{2}(Z_r)|^2+\gamma_r-\frac{1}{2}\left|\overline{Z}_r\right|^2\right)\dx r\right]\leq $$
$$ \leq\left\|\frac{1}{-\varphi}\right\|_\infty\left(\mybox{\mathbb{E}_{\tilde{\mathbb{Q}}}\left[\int_t^T\beta^\top_s\overline{Z}_s\dx s\right]}+T\|\alpha\|_\infty+2\|\varphi\|_\infty\right)+T\|\gamma\|_\infty-
\frac{1}{2}\mathbb{E}_{\tilde{\mathbb{Q}}}\left[\int_{t}^{T}\left|\overline{Z}_r\right|^2\dx r\right]\leq $$
$$ \leq\mybox{\mathbb{E}_{\tilde{\mathbb{Q}}}\left[\int_t^T\frac{1}{2}\left|\left\|\frac{1}{-\varphi}\right\|_\infty\beta_s\right|^2\dx s\right]+\mathbb{E}_{\tilde{\mathbb{Q}}}\left[\int_t^T\frac{1}{2}|\overline{Z}_s|^2\dx s\right]}+$$
$$+\left\|\frac{1}{-\varphi}\right\|_\infty\left(T\|\alpha\|_\infty+2\|\varphi\|_\infty\right)+T\|\gamma\|_\infty-
\frac{1}{2}\mathbb{E}_{\tilde{\mathbb{Q}}}\left[\int_{t}^{T}\left|\overline{Z}_r\right|^2\dx r\right]\leq $$
$$\leq \frac{1}{2}\left\|\frac{1}{-\varphi}\right\|_\infty^2T\|\beta\|_\infty^2+\left\|\frac{1}{-\varphi}\right\|_\infty\left(T\|\alpha\|_\infty+2\|\varphi\|_\infty\right)+T\|\gamma\|_\infty<\infty. $$
This is a uniform bound we were looking for. This means that
$$\mathbb{E}_{\tilde{\mathbb{Q}}}[\ln(1+\hat{V}_s)]=\ln\left(1+\frac{\dx}{\dx x}u(t,\check{x},x)\right)\geq -C$$
 for a.a. $(\check{x},x)$, where $C>0$ does not depend on $t$, or $u$ or $\varepsilon$ which immediately implies that $\frac{\dx}{\dx x}u(t,\cdot,\cdot)$ is uniformly bounded away from $-1$. The same bound works for $\frac{\dx}{\dx x}u(s,\cdot,\cdot)$, $s\in [t,T]$. $\hfill\myqed$

And so we have controlled $\frac{\dx}{\dx x}u(s,\cdot,\cdot)$ from both sides such that its modulus is bounded uniformly away from $1$ (independently of $t$, $u$, $\varepsilon$, as long as $\varepsilon$ is sufficiently small for the problem to satisfy MLLC). This shows that Theorem \ref{sc3} is applicable and we have $I^{M}_{\mathrm{max}}=[0,T]$ for some $\varepsilon>0$.
In particular, the FBSDE given by \eqref{forward2} and \eqref{backwardH} for the interval $[0,T]$ has a solution $\check{X},X,Y,Z$ s.t. $\|Z\|_\infty<\infty$ for any initial value $(\check{x},x)\in\mathbb{R}^{1\times N}\times\mathbb{R}$.\\
Furthermore, this solution is unique: Assume there is another such triple $(\check{X}',X',Y',Z')$. Then, due to boundedness of $Z'$ and the dynamics of $Y'$, the process $Y'$ must be bounded as well. At the same time the dynamics of $X'$ imply that it satisfies $\sup_{s\in [0,T]}\mathbb{E}_{\mathbb{Q}}[(X'_s)^2]<\infty$. Similar properties hold true for $X$ and $Y$, so Lemma \ref{UNIqXYZM} is applicable and the triples must coincide.
\end{proof}

\begin{remark}\label{martingalerem}
Using the It\^o formula it is straightforward to verify that the processes $X,Y,Z$ from Theorem \ref{finalresult} satisfy:
$$ U'(X_t+Y_t)=U'(X_0+Y_0)+\int_0^t U'(X_s+Y_s)\alpha^\top_s\dx W_s\qquad\textrm{a.s.}\qquad\forall t\in[0,T], $$
where $\alpha_s:=\frac{U''}{U'}(X_s+Y_s)\pi_2(Z_s)-\pi_1(\theta_s)$, $s\in[0,T]$. This implies that $t\mapsto U'(X_t+Y_t)$ describes a uniformly integrable martingale due to boundedness of $\frac{U''}{U'}$, $Z$ and $\theta$.
\end{remark}

{\small
\bibliography{quellen}{}
\bibliographystyle{abbrv}
}

\end{document}